\newtheorem{assumption}{Assumption}
\newtheorem{lemma}{Lemma}
\newtheorem{theorem}{Theorem}
\newtheorem{proposition}{Proposition}
\newtheorem{corollary}{Corollary}
\theoremstyle{definition}
\newtheorem{definition}[theorem]{Definition}
\newcommand{\R}{\mathbb{R}} 
\newcommand{\N}{\mathbb{N}} 
\newcommand{\cD}{{\cal D}}
\newcommand{\cO}{{\cal O}}
\newcommand{\cQ}{{\cal Q}}
\newcommand{\cR}{{\cal R}}
\newcommand{\cX}{{\cal X}}
\newcommand{\mA}{{\bf A}}
\newcommand{\mD}{{\bf D}}
\newcommand{\mG}{{\bf G}}
\newcommand{\mH}{{\bf H}}
\newcommand{\mI}{{\bf I}}
\newcommand{\mM}{{\bf M}}
\newcommand{\mP}{{\bf P}}
\newcommand{\mS}{{\bf S}}
\newcommand{\mU}{{\bf U}}
\newcommand{\mW}{{\bf W}}
\newcommand{\eqdef}{\coloneqq} 
\newcommand{\dotprod}[1]{\left< #1\right>} 
\newcommand{\norm}[1]{ \left\| #1 \right\|}      
\newcommand{\Prob}[1]{\mathbb{P}[#1]}
\DeclareMathOperator{\Null}{Null}  
\DeclareMathOperator{\Range}{Range}     
\DeclareMathOperator{\argmin}{argmin}        
\DeclareMathOperator{\diag}{diag}       
\newcommand{\Diag}[1]{\mathbf{Diag}\left( #1\right)}
\providecommand{\range}[1]{{\rm Range}\left( #1\right)}
\providecommand{\trace}[1]{{\rm Trace}\left( #1\right)}
\newcommand{\Exp}[1]{{{\rm E}}[#1] }    
\newcommand{\E}[1]{\mathbb{E}\left[#1\right] } 
\newcommand{\EE}[2]{\mathbb{E}_{#1}\left[#2\right] }
\title{RSN: Randomized Subspace Newton}
\author{%
  Robert M. Gower \\
  LTCI, T\'{e}l\'{e}com Paristech, IPP,
  France\\
  \texttt{gowerrobert@gmail.com} \\
  \And
  Dmitry Kovalev \\
   KAUST, Saudi Arabia\\
  \texttt{dmitry.kovalev@kaust.edu.sa} \\
   \And
  Felix Lieder \\
  Heinrich-Heine-Universit\"{a}t D\"{u}sseldorf, Germany\\
  \texttt{lieder@opt.uni-duesseldorf.de} \  
  \And
  Peter Richt\'arik \\
  KAUST, Saudi Arabia and MIPT, Russia\\
  \texttt{peter.richtarik@kaust.edu.sa} \\
}
\begin{document}

\maketitle

\begin{abstract}
We develop a randomized Newton method capable of solving learning problems with huge dimensional feature spaces,  which is a common setting in applications such as medical imaging, genomics and seismology. Our method leverages  randomized sketching in a new way, by finding the Newton direction constrained to the space spanned by a random sketch. We develop a simple global linear convergence theory that holds for practically all sketching techniques, which gives the practitioners the freedom to design custom sketching approaches suitable for particular applications. We perform numerical experiments which demonstrate the efficiency of our method as compared to accelerated gradient descent and the full Newton method. Our method can be seen as a refinement and randomized extension of the results of Karimireddy, Stich, and Jaggi~\cite{KSJ-Newton2018}.
\end{abstract}
%
%


 \section{Introduction}

In this paper we are interested in unconstrained optimization problems  of the form
\begin{equation}\label{eq:main} \min_{x\in \R^d} f(x),
\end{equation}

where $f:\R^d\to \R$ is a sufficiently well behaved function, in the {\em large dimensional} setting, i.e., when $d$ is very large.   Large dimensional optimization problems  are becoming ever more common in applications. Indeed, $d$ often stands for the dimensionality of captured data, and due to fast-paced advances in technology, this only keeps growing. One of key driving forces behind this is  the rapid increase in the resolution of sensors used in  medicine~\cite{bigmedicaldata:2017}, genomics~\cite{genomics:2000,ribosomaldata:2013}, seismology~\cite{seismic2014} and weather forecasting~\cite{terraclimate:2018}. To make predictions using such high dimensional data, typically one needs to solve an optimization problem such as~\eqref{eq:main}. The traditional off-the-shelf solvers for such problems are based on Newton's method, but in this large dimensional setting they cannot be applied due to the high memory footprint and computational costs of solving the Newton system.  We offer a new solution to this, by iteratively performing Newton steps in random subspaces of sufficiently low dimensions. The resulting randomized Newton's method need only solve small randomly compressed Newton systems and can  be applied to solving~\eqref{eq:main} no matter how big the dimension $d$.

%

\subsection{Background and contributions}

%
%
%
Newton's method dates back to even before Newton, making an earlier appearance in the work of the Persian astronomer and mathematician al-Kashi 1427 in his ``Key to Arithmetic''~\cite{Ypma:1995}. In the 80's Newton's method became the workhorse of nonlinear optimization methods such as trust region~\cite{trustregionbook}, augmented Lagrangian~\cite{Bertsekaslagrangebook} and interior point methods. The research into interior point methods culminated with Nesterov and Nemirovskii's~\cite{nesterov1987interior} ground breaking work proving that minimizing a convex (self-concordant) function could be done in a polynomial number of steps, where in each step a Newton system was solved.

Amongst the properties that make Newton type methods so attractive is that they are invariant to rescaling and coordinate transformations. This property makes them particularly appealing for off-the-shelf solvers since  they work well independently of how the user chooses to scale or represent the variables. This in turn means that Newton based methods need little or no tuning of hyperparameters. This is in contrast with first-order methods\footnote{An exception to this is, for instance, the optimal first order affine-invariant method in~\cite{AspremOptAffInvar2018}.}, where even rescaling the function can result in a significantly different sequence of iterates, and their efficient execution relies on parameter tuning (typically the stepsize).
 
 Despite these advantages,  Newton based solvers are now facing  a challenge that renders most of them inapplicable: large dimensional feature spaces. Indeed, solving a generic Newton system costs $O(d^3).$ While inexact Newton methods~\cite{Dembo1982,Byrd2011} made significant headway to diminishing this high cost by relying on Krylov based solvers whose  iterations cost $O(d^2)$, this too can be prohibitive, and this is why first order methods such as accelerated gradient descent~\cite{NesterovBook} are often used in the large dimensional setting.

In this work we develop a family of randomized Newton methods which work by  leveraging  randomized sketching and projecting~\cite{Gower2015}. 
The resulting randomized Newton method has a {\bf global linear convergence} 
 for virtually any type and size of sketching matrix. In particular,  one can choose a sketch of size one, which yields a {\bf low iteration complexity} of as little as $O(1)$ if one assumes that scalar derivatives can be computed in constant time.  Our main assumptions are the recently introduced~\cite{KSJ-Newton2018} {\bf relative smoothness and convexity}\footnote{These  notions are different from the {\em relative smoothness and convexity} concepts considered in \cite{RelSmooth_LFN}.} of $f$, which are in a certain sense weaker than the more common strong convexity and smoothness assumptions. 
  Our method is also {\bf scale invariant}, which facilitates setting the stepsize. We further propose an efficient line search strategy that does not increase the iteration complexity.

There are only a handful of Newton type methods in the literature that use iterative sketching, including the sketched Newton algorithm~\cite{NewtonSketch}, SDNA (Stochastic Dual Newton Ascent)~\cite{Qu2015}, RBCN (Randomized Block Cubic Newton)~\cite{RBCN} and~SON~\cite{NIPS2016_6207}.
  In the unconstrained case  the sketched Newton algorithm~\cite{NewtonSketch} requires a sketching matrix that is proportional to the global rank of the Hessian, an unknown constant related to high probability statements and  $\epsilon^{-2}$, where $\epsilon>0$ is the desired tolerance. Consequently, the required sketch size could be as large as $d$, which defeats the purpose.
  
   The SDNA algorithm in~\cite{Qu2015}
relies on the existence of a positive definite matrix $\mM \in \R^{d\times d}$ that globally upper bounds the Hessian, which is a stronger assumption than our relative smoothness assumption. The method then proceeds by selecting random principal submatrices of $\mM$ that it then uses to form and solve an approximate Newton system. The theory in~\cite{Qu2015} allows for any sketch size, including size of one.  Our method could be seen as an extension of SDNA to allow for any sketch, one that is directly applied to the Hessian (as opposed to $\mM)$ and one that relies on a set of more relaxed assumptions. The  RBCN method combines the ideas of randomized coordinate descent~\cite{RCDM} and cubic regularization~\cite{PN2006-cubic}. The method requires the optimization problem to be block separable and is hence not  applicable to the problem we consider here.
Finally, SON~\cite{NIPS2016_6207} uses random and deterministic streaming sketches to scale up a second-order method, akin to a Gauss--Newton method, for solving online learning problems.

\subsection{Key Assumptions}

We assume throughout that $f: \R^d \rightarrow \R$  is a  convex and twice  differentiable function. Further, we assume that $f$ is bounded below and the set of minimizers $\cX_*$  nonempty. We denote the optimal value of~\eqref{eq:main} by $f_* \in \R$. 

 Let $\mH(x) \eqdef \nabla^2 f(x)$ (resp.\ $g(x) = \nabla f(x)$) be the Hessian (resp.\ gradient) of $f$ at $x$. We fix an initial iterate $x_0 \in \R^d$ throughout and define $\cQ$ to be a level set of function $f(x)$ associated with $x_0$:
	\begin{equation}\label{eq:cQ}
		\cQ \eqdef \left\{x \in \R^d : f(x) \leq f(x_0)  \right\}.
	\end{equation}
Let $\dotprod{x,y}_{\mH(x_k)} \eqdef \dotprod{\mH(x_k)x,y}$ for all $x,y \in \R^d.$	Our main assumption on $f$ is given next.
\begin{assumption}\label{ass:Newton}  There exist constants $\hat{L} \geq \hat{\mu} >0$  such that  for all $x,y\in \cQ$:
\begin{equation}\label{eq:Lhat} f(x) \leq \underbrace{f(y) + \langle g(y),x-y\rangle  + \frac{\hat{L}}{2}\|x-y\|^2_{\mH(y)}}_{\eqdef T(x,y)},
\end{equation}
\begin{equation}\label{eq:muhat} f(x) \geq f(y) + \langle g(y),x-y\rangle  + \frac{\hat{\mu}}{2}\|x-y\|^2_{\mH(y)}.
\end{equation}
We refer to $\hat{L}$ and $\hat{\mu}$ as the \emph{relative smoothness} and \emph{relative convexity} constant, respectively.
\end{assumption}
Relative smoothness and convexity  is a direct consequence of smoothness and strong convexity. It is also a consequence of the recently introduced~\cite{KSJ-Newton2018} $c$--stability condition, which served to us as an inspiration. Specifically, as shown in Lemma 2 in~\cite{KSJ-Newton2018} and also formally (for convenience) stated in Proposition~\ref{prop:cstable} in the supplementary material, we have that
\[\mbox{$L$--smooth + $\mu$--strongly convex} \quad \Rightarrow \quad \mbox{$c$--stability} \quad \Rightarrow \quad \mbox{relative smoothness \&  relative convexity}.\]
We will also further assume:
\begin{assumption}\label{ass:range}  $g(x)\in \range{\mH(x)}$ for all $x\in \R^d$.
\end{assumption}

Assumption~\ref{ass:range} holds if the Hessian is positive definite for all $x$, and  for generalized linear models.

\subsection{The full Newton method}

Our baseline method for solving \eqref{eq:main},  is the following variant of the Newton Method (NM):
\begin{equation}\label{NM-unconstrained}
x_{k+1} \;= \; x_k + \gamma n(x_k) \; \eqdef \; x_k - \gamma \mH^\dagger(x_k) g(x_k),\end{equation}
where  $\mH^\dagger(x_k) $ is the Moore-Penrose pseudoinverse of $\mH(x_k)$ and $n(x_k) \eqdef -\mH^\dagger (x_k)g(x_k)$ is the Newton direction. 
A property (which we recall from~\cite{KSJ-Newton2018})  that will be important for our analysis is that for a suitable stepsize,  Newton's method is a descent method.
\begin{lemma}\label{lem:descent}
Consider the iterates $\{x_{k}\}_{k\geq 0}$ defined recursively by~\eqref{NM-unconstrained}. If  $\gamma\leq \nicefrac{1}{\hat{L}}$ and~\eqref{eq:Lhat} holds, then   $f(x_{k+1}) \leq f(x_k)$ for all $k \geq 0$, and in particular, $x_k \in \cQ$\; for all $k\geq 0$.
\end{lemma}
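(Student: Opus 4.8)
The plan is to show by induction on $k$ that $f(x_{k+1}) \le f(x_k)$, from which $x_k \in \cQ$ for all $k$ follows immediately (since $f(x_k) \le f(x_{k-1}) \le \dots \le f(x_0)$ places $x_k$ in the level set $\cQ$). The base case and inductive step are in fact the same computation, so the real content is a single-step decrease estimate: assuming $x_k \in \cQ$, show $f(x_{k+1}) \le f(x_k)$. Note we must be slightly careful because the upper bound~\eqref{eq:Lhat} is only assumed to hold for $x,y \in \cQ$, so the induction is what lets us legitimately apply it at $y = x_k$; we also need $x_{k+1} \in \cQ$ to continue, but that is exactly the conclusion $f(x_{k+1}) \le f(x_k) \le f(x_0)$.

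For the single-step estimate, I would apply~\eqref{eq:Lhat} with $y = x_k$ and $x = x_{k+1} = x_k - \gamma \mH^\dagger(x_k) g(x_k)$. Writing $n_k \eqdef n(x_k) = -\mH^\dagger(x_k) g(x_k)$, so that $x_{k+1} - x_k = \gamma n_k$, the bound gives
\begin{equation*}
f(x_{k+1}) \le f(x_k) + \gamma \langle g(x_k), n_k \rangle + \frac{\hat L \gamma^2}{2} \|n_k\|^2_{\mH(x_k)}.
\end{equation*}
Now I would simplify the two correction terms using properties of the pseudoinverse. First, $\langle g(x_k), n_k \rangle = -\langle g(x_k), \mH^\dagger(x_k) g(x_k)\rangle$, and since $\mH^\dagger(x_k)$ is symmetric positive semidefinite, this equals $-\|g(x_k)\|^2_{\mH^\dagger(x_k)} \le 0$, where I write $\|v\|^2_{\mM} \eqdef \langle \mM v, v\rangle$. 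Second, $\|n_k\|^2_{\mH(x_k)} = \langle \mH(x_k)\mH^\dagger(x_k) g(x_k), \mH^\dagger(x_k) g(x_k)\rangle$. Using the identity $\mH^\dagger \mH \mH^\dagger = \mH^\dagger$ (a defining property of the Moore--Penrose pseudoinverse) together with symmetry, this collapses to $\langle g(x_k), \mH^\dagger(x_k) g(x_k)\rangle = \|g(x_k)\|^2_{\mH^\dagger(x_k)}$ as well. Substituting both,
\begin{equation*}
f(x_{k+1}) \le f(x_k) - \gamma \|g(x_k)\|^2_{\mH^\dagger(x_k)} + \frac{\hat L \gamma^2}{2}\|g(x_k)\|^2_{\mH^\dagger(x_k)} = f(x_k) - \gamma\left(1 - \frac{\hat L \gamma}{2}\right)\|g(x_k)\|^2_{\mH^\dagger(x_k)}.
\end{equation*}

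Since $\|g(x_k)\|^2_{\mH^\dagger(x_k)} \ge 0$, it suffices that $\gamma(1 - \hat L\gamma/2) \ge 0$, which holds whenever $0 < \gamma \le 2/\hat L$, and in particular for $\gamma \le 1/\hat L$ as assumed. This proves $f(x_{k+1}) \le f(x_k)$, closing the induction. The main obstacle — such as it is — is the bookkeeping with the pseudoinverse: one has to be careful not to assume $\mH(x_k)$ is invertible, and the clean cancellation of the quadratic term into $\|g(x_k)\|^2_{\mH^\dagger(x_k)}$ relies precisely on the pseudoinverse identity $\mH\mH^\dagger\mH = \mH$ (equivalently $\mH^\dagger\mH\mH^\dagger = \mH^\dagger$) rather than $\mH\mH^\dagger = \mI$. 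Assumption~\ref{ass:range}, guaranteeing $g(x_k) \in \range{\mH(x_k)}$, is what makes the Newton direction meaningful here, though the descent inequality above actually goes through regardless; it is simply cleanest to note $\mH(x_k)\mH^\dagger(x_k) g(x_k) = g(x_k)$ under that assumption, which makes the computation of $\langle g(x_k), n_k\rangle$ and $\|n_k\|^2_{\mH(x_k)}$ transparently equal.
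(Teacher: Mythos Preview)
Your pseudoinverse algebra is correct, and once \eqref{eq:Lhat} can be applied at $(x,y)=(x_{k+1},x_k)$ the bound $f(x_{k+1}) \le f(x_k) - \gamma(1-\hat L\gamma/2)\|g(x_k)\|_{\mH^\dagger(x_k)}^2$ follows exactly as you wrote. The gap is that you never justify applying \eqref{eq:Lhat} there: by Assumption~\ref{ass:Newton} the bound is only granted for $x,y\in\cQ$, and while your induction hypothesis places $x_k$ in $\cQ$, you need $x_{k+1}\in\cQ$ \emph{before} invoking the inequality --- which is precisely what you are trying to prove. Your sentence ``we also need $x_{k+1}\in\cQ$ to continue, but that is exactly the conclusion'' notices the circularity but does not resolve it.

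This is why the paper's one-line sketch (deferring to \cite[Lemma~3]{KSJ-Newton2018}) explicitly lists twice differentiability and convexity of $f$ as ingredients alongside \eqref{eq:Lhat}: some continuity argument along the segment $[x_k,x_{k+1}]$ is needed to break the loop. One clean fix: set $\phi(t)=f(x_k+t\,n_k)$ and $t^*=\sup\{t\in[0,\gamma]:\phi(s)\le f(x_k)\text{ for all }s\in[0,t]\}$. Continuity of $f$ gives $x_k+t^*n_k\in\cQ$, so your calculation legitimately yields $\phi(t^*)\le f(x_k)-t^*(1-\hat L t^*/2)\|g(x_k)\|^2_{\mH^\dagger(x_k)}$; if $t^*<\gamma$ one argues (via $\phi(t^*)=f(x_k)$ from maximality when $t^*>0$, or via $\phi'(0)\ge 0$ when $t^*=0$) that $\|g(x_k)\|^2_{\mH^\dagger(x_k)}=0$, hence $n_k=0$ and $\phi\equiv f(x_k)$, contradicting $t^*<\gamma$. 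So $t^*=\gamma$ and the induction closes. With this patch the remainder of your argument stands.
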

The proof follows by using~\eqref{eq:Lhat}, twice differentiability and convexity of $f$. See~\cite[Lemma 3]{KSJ-Newton2018}.
   

%

The relative smoothness assumption~\eqref{eq:Lhat} is particularily important for motivating Newton's method. Indeed, a Newton step is the exact minimizer of the upper bound in~\eqref{eq:Lhat}.
\begin{lemma} \label{lem:Tksol} If Assumption~\ref{ass:range} is satisfied, then the quadratic $x\mapsto T(x,x_k)$ defined in \eqref{eq:Lhat} has a global minimizer $x_{k+1}$ given by
$x_{k+1}  = x_k - \frac{1}{\hat{L}}\mH^\dagger(x_k)  g(x_k)\;\in \; \cQ. $
\end{lemma}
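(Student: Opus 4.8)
The plan is to recognise $T(\cdot,x_k)$ as a convex quadratic and then pin down its minimiser through a first-order stationarity condition. First I would note that, since $f$ is convex and twice differentiable, $\mH(x_k)=\nabla^2 f(x_k)$ is symmetric positive semidefinite, so
\[
x\mapsto T(x,x_k)=f(x_k)+\langle g(x_k),x-x_k\rangle+\tfrac{\hat{L}}{2}\|x-x_k\|_{\mH(x_k)}^2
\]
is convex. Hence a point is a global minimiser of $T(\cdot,x_k)$ if and only if its gradient vanishes, i.e.\ if and only if $g(x_k)+\hat{L}\,\mH(x_k)(x-x_k)=\zeros$.

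Next I would substitute the candidate $x_{k+1}=x_k-\tfrac{1}{\hat{L}}\mH^\dagger(x_k)g(x_k)$ into this equation. This reduces the stationarity condition to the identity $\mH(x_k)\mH^\dagger(x_k)g(x_k)=g(x_k)$. Here Assumption~\ref{ass:range} does the work: $\mH(x_k)\mH^\dagger(x_k)$ is the orthogonal projector onto $\range{\mH(x_k)}$, and since $g(x_k)\in\range{\mH(x_k)}$ this projector leaves $g(x_k)$ unchanged. Thus $x_{k+1}$ satisfies the stationarity condition and, by convexity of $T(\cdot,x_k)$, is a global minimiser, which simultaneously settles the existence claim.

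It remains to show $x_{k+1}\in\cQ$. I would simply observe that $x_{k+1}$ is exactly one step of the full Newton method~\eqref{NM-unconstrained} from $x_k$ with stepsize $\gamma=\tfrac{1}{\hat{L}}\le\tfrac{1}{\hat{L}}$, so Lemma~\ref{lem:descent} yields $f(x_{k+1})\le f(x_k)\le f(x_0)$, i.e.\ $x_{k+1}\in\cQ$. (Alternatively one can argue directly from $T(x_{k+1},x_k)\le T(x_k,x_k)=f(x_k)$ together with~\eqref{eq:Lhat}; but applying~\eqref{eq:Lhat} at $x_{k+1}$ presupposes $x_{k+1}\in\cQ$, so one would need a continuity argument along the segment $[x_k,x_{k+1}]$ — invoking Lemma~\ref{lem:descent} avoids this.) I do not expect a genuine obstacle: the only subtle points are using the correct pseudoinverse identity $\mH\mH^\dagger g=g$, which holds \emph{precisely} because of the range assumption (whereas $\mH^\dagger\mH=\mI$ fails when $\mH$ is singular), and remembering that the membership $x_{k+1}\in\cQ$ requires the descent property rather than mere stationarity.
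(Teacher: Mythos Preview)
Your proposal is correct and follows essentially the same approach as the paper. The paper's proof simply cites Lemma~\ref{lem:descent} for the inclusion $x_{k+1}\in\cQ$ and the appendix Lemma~\ref{lem:seminormsol} for the minimiser claim; your argument reproduces the relevant special case of Lemma~\ref{lem:seminormsol} inline (stationarity of a convex quadratic together with the pseudoinverse identity $\mH\mH^\dagger g=g$ under Assumption~\ref{ass:range}) and then invokes Lemma~\ref{lem:descent} in the same way.
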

\begin{proof} Lemma~\ref{lem:descent} implies  that $x_{k+1} \in \cQ$, and Lemma~\ref{lem:seminormsol} in the appendix shows that~\eqref{NM-unconstrained} 
is a global minimizer for $\gamma = \nicefrac{1}{\hat{L}}$. 
 \end{proof}

\section{Randomized Subspace Newton}

Solving a Newton system exactly is costly and may be a waste of resources. Indeed, this is the reason for the existence of  inexact variants of  Newton methods~\cite{Dembo1982}. For these inexact Newton methods,  an accurate solution is only needed when close to the optimal point. 

In this work we  introduce a different inexactness idea: we propose  to solve an {\em exact} Newton system, but in an {\em inexact} randomly selected subspace. In other words, we propose a \emph{randomized subspace Newton method}, where the randomness is introduced via  {\em sketching matrices}, defined next.

\begin{definition}
Let $\cD$ be a (discrete or continuous) distribution over matrices in $\R^{d\times s}.$
We say that  $\mS \sim \cD$ is a random sketching matrix and $s \in \N$ is the sketch size.
\end{definition}
We will often assume that the random sketching is \emph{nullspace preserving}.
\begin{assumption} \label{ass:null}We say that $\mS \sim \cD$ is \emph{nullspace preserving} if with probability one we have that
\begin{equation}\label{eq:NullSHS}
\Null \left(\mS^\top \mH(x)\mS \right) = \Null(\mS), \qquad \forall x \in \cQ.
\end{equation}
\end{assumption}

By sampling a sketching matrix $\mS_k \sim \cD$ in the $k$th iteration, we can form a \emph{sketched Newton} direction using only the sketched Hessian $\mS_k^\top \mH(x_k)\mS_k \in \R^{s\times s}$; see line~\ref{ln:rasunenx1} in Algorithm~\ref{alg:RASUNE}. Note that the sketched Hessian is the result of twice differentiating the function $\lambda \mapsto f(x_k+\mS_k\lambda),$ which can be done efficiently using a single backpropation pass~\cite{Gower:2012} or $s$ backpropagation passes~\cite{Christianson:1992}  which costs at most $s$ times the cost of evaluating the function $f$.

\begin{algorithm}
\begin{algorithmic}[1]
\State \textbf{input:}  $x_0\in \R^{d}$
\State \textbf{parameters:} ${\cal D}$ = distribution over random matrices
\For {$k = 0, 1, 2, \dots$}
	\State sample a fresh sketching matrix: $\mS_k \sim \cD$
	\State $x_{k+1} = x_k - \frac{1}{\hat{L}} \mS_k\left(\mS_k^\top \mH(x_k) \mS_k \right)^{\dagger} \mS_k^\top g(x_k)$ \label{ln:rasunenx1}
\EndFor
\State \textbf{output:} last iterate $x_k$
\end{algorithmic}
\caption{RSN: Randomized Subspace Newton}
\label{alg:RASUNE}
\end{algorithm}

First we show that much like the full Newton method~\eqref{NM-unconstrained}, Algorithm~\ref{alg:RASUNE} is a descent method.
\begin{lemma} [Descent] \label{lem:descentrasunun}
Consider the iterates $x_k$ given Algorithm~\ref{alg:RASUNE}. If   Assumptions~\ref{ass:Newton},~\ref{ass:range} and~\ref{ass:null} hold, then  $f(x_{k+1}) \leq f(x_k)$ and consequently $x_k \in \cQ$\; for all $k\geq 0$. 
\end{lemma}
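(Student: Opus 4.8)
The plan is to follow the template of Lemma~\ref{lem:descent}: argue by induction on $k$ that $x_k \in \cQ$ and $f(x_{k+1}) \le f(x_k)$, the only new ingredient being that the quadratic upper model $T(\cdot,x_k)$ from~\eqref{eq:Lhat} is now minimized over the random affine subspace $x_k + \range{\mS_k}$ instead of over all of $\R^d$. The base case $x_0 \in \cQ$ is immediate from~\eqref{eq:cQ}. For the inductive step I assume $x_k \in \cQ$, so that Assumptions~\ref{ass:Newton}--\ref{ass:null} are in force at $x_k$.

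The first substantive step is to recognize the point $x_{k+1}$ produced on line~\ref{ln:rasunenx1} as the minimizer of $\lambda \mapsto T(x_k + \mS_k\lambda, x_k)$. Writing this restricted model out gives
\[ T(x_k + \mS_k\lambda, x_k) = f(x_k) + \langle \mS_k^\top g(x_k), \lambda\rangle + \tfrac{\hat{L}}{2}\langle \mS_k^\top \mH(x_k)\mS_k\,\lambda, \lambda\rangle, \]
a convex quadratic in $\lambda$ whose Hessian $\mS_k^\top \mH(x_k)\mS_k$ is positive semidefinite. To apply Lemma~\ref{lem:seminormsol} and conclude that the minimizer is $\lambda_k = -\tfrac{1}{\hat{L}}(\mS_k^\top \mH(x_k)\mS_k)^\dagger \mS_k^\top g(x_k)$, I need the range inclusion $\mS_k^\top g(x_k) \in \range{\mS_k^\top \mH(x_k)\mS_k}$. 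This is where nullspace preservation is used: $\mS_k^\top \mH(x_k)\mS_k$ is symmetric, hence its range is the orthogonal complement of its nullspace, which by~\eqref{eq:NullSHS} equals $\Null(\mS_k)$; therefore $\range{\mS_k^\top \mH(x_k)\mS_k} = \Null(\mS_k)^\perp = \range{\mS_k^\top} \ni \mS_k^\top g(x_k)$ (Assumption~\ref{ass:range} additionally guarantees that this restricted step is the restriction of a genuine Newton step). Thus $x_{k+1} = x_k + \mS_k\lambda_k$ is the global minimizer of $T(\cdot,x_k)$ over $x_k + \range{\mS_k}$.

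Granting for a moment that $x_{k+1} \in \cQ$, the descent inequality follows from the two-step chain
\[ f(x_{k+1}) \;\le\; T(x_{k+1},x_k) \;\le\; T(x_k,x_k) \;=\; f(x_k), \]
where the first inequality is~\eqref{eq:Lhat} with $x = x_{k+1}$, $y = x_k$, and the second holds because $x_k = x_k + \mS_k\cdot 0$ lies in $x_k + \range{\mS_k}$, over which $x_{k+1}$ is the minimizer. The one genuine obstacle is that~\eqref{eq:Lhat} is only assumed on $\cQ$, so using it at $x_{k+1}$ before knowing $x_{k+1}\in\cQ$ is circular. I would resolve this exactly as in the proof of Lemma~\ref{lem:descent}: apply the same estimate along the segment $x_t = x_k + t(x_{k+1}-x_k)$, $t\in[0,1]$, to obtain $f(x_t) \le f(x_k) - \tfrac{c}{2\hat{L}}\,t(2-t)$ for every $t$ with $x_t\in\cQ$, where $c = \langle \mS_k^\top g(x_k), (\mS_k^\top\mH(x_k)\mS_k)^\dagger \mS_k^\top g(x_k)\rangle \ge 0$ (using that $\mS_k^\top g(x_k)$ lies in the range of $\mS_k^\top\mH(x_k)\mS_k$ when simplifying $\|x_{k+1}-x_k\|^2_{\mH(x_k)}$); a standard continuity argument along this segment, together with closedness of $\cQ$, then shows $x_t\in\cQ$ for all $t\in[0,1]$, in particular $x_{k+1}\in\cQ$ and $f(x_{k+1})\le f(x_k)\le f(x_0)$. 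This closes the induction.
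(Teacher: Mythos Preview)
Your proof is correct and follows the same route as the paper: identify $x_{k+1}$ as the minimizer of $T(\cdot,x_k)$ over $x_k+\range{\mS_k}$ (this is exactly~\eqref{eq:lambdaleastTSk} in Lemma~\ref{lem:viewpoints}, which you re-derive inline via Lemma~\ref{lem:seminormsol} and the range identity forced by~\eqref{eq:NullSHS}), then chain $f(x_{k+1})\le T(x_{k+1},x_k)\le T(x_k,x_k)=f(x_k)$. The paper's own proof is the same two-step chain, written in one line.

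You are in fact more careful than the paper on one point: the paper applies~\eqref{eq:Lhat} at $x_{k+1}$ without first verifying $x_{k+1}\in\cQ$, silently relying on the argument behind Lemma~\ref{lem:descent} (deferred there to~\cite{KSJ-Newton2018}). Your segment computation $T(x_t,x_k)=f(x_k)-\tfrac{c}{2\hat L}\,t(2-t)$ is correct, and together with continuity of $f$ and closedness of $\cQ$ it does close the induction; just note that in the boundary case $c=0$ one has $\lambda_k=0$ and $x_{k+1}=x_k$ trivially, while for $c>0$ the strict decrease along the segment makes the open--closed argument go through.
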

While common in the literature of randomized coordinate (subspace) descent method, this is a rare result for randomized stochastic gradient descent methods, which do not enjoy a descent property. Lemma~\ref{lem:descentrasunun} is useful in monitoring the progress of the method in cases when function evaluations are not too prohibitive. However, we use it solely for establishing a  tighter convergence theory.

Interestingly, the iterations of Algorithm~\ref{alg:RASUNE} can be equivalently formulated as a random projection of the full Newton step, as we detail next.
\begin{lemma} \label{lem:projn}
Let Assumptions~\ref{ass:Newton} and~\ref{ass:range} hold. 
Consider the projection matrix $\mP_k$ with respect to the seminorm $\norm{\cdot}_{\mH(x_k)}^2 \eqdef \dotprod{\cdot, \cdot}_{\mH(x_k)}$ given by
\begin{equation} \label{eq:proj}
\mP_k \eqdef \mS_k\left(\mS_k^\top \mH(x_k) \mS_k \right)^{\dagger}\mS_k^\top \mH(x_k) \in \R^{d\times d}.
\end{equation} 
The iterates of Algorithm~\ref{alg:RASUNE} can be viewed as a projection of the Newton step given by
\begin{equation}\label{eq:RASUNE}
x_{k+1} = x_k +\frac{1}{\hat{L}}\mP_k n(x_k) \;.
\end{equation}
\end{lemma}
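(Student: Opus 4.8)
The plan is to show that the two expressions for $x_{k+1}$ — the one from line~\ref{ln:rasunenx1} of Algorithm~\ref{alg:RASUNE} and the one in~\eqref{eq:RASUNE} — are literally the same vector. Writing out~\eqref{eq:RASUNE} using $n(x_k) = -\mH^\dagger(x_k)g(x_k)$ and the definition~\eqref{eq:proj} of $\mP_k$, it becomes
\[
x_k + \frac{1}{\hat{L}}\mP_k n(x_k) = x_k - \frac{1}{\hat{L}}\,\mS_k\left(\mS_k^\top \mH(x_k)\mS_k\right)^{\dagger} \mS_k^\top \mH(x_k)\mH^\dagger(x_k)\,g(x_k),
\]
so the whole claim reduces to the identity $\mS_k^\top \mH(x_k)\mH^\dagger(x_k)\,g(x_k) = \mS_k^\top g(x_k)$, for which it is more than enough to prove $\mH(x_k)\mH^\dagger(x_k)\,g(x_k) = g(x_k)$.

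The key step is to invoke Assumption~\ref{ass:range}, which gives $g(x_k)\in\range{\mH(x_k)}$. Since $\mH(x_k)$ is symmetric, $\mH(x_k)\mH^\dagger(x_k)$ is the orthogonal projector onto $\range{\mH(x_k)}$, hence it fixes every vector in that subspace; applying it to $g(x_k)$ returns $g(x_k)$. Substituting this into the display above collapses the right-hand side exactly to the update in line~\ref{ln:rasunenx1}, which is what we wanted.

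For completeness I would also verify that $\mP_k$ deserves to be called the orthogonal projection with respect to $\norm{\cdot}_{\mH(x_k)}$: idempotency $\mP_k^2 = \mP_k$ follows from the pseudoinverse identity $A^\dagger A A^\dagger = A^\dagger$ applied to $A = \mS_k^\top \mH(x_k)\mS_k$, and self-adjointness with respect to $\dotprod{\cdot,\cdot}_{\mH(x_k)}$ amounts to $\mH(x_k)\mP_k$ being symmetric, which holds because both $\mH(x_k)$ and $\left(\mS_k^\top\mH(x_k)\mS_k\right)^\dagger$ are symmetric. (The range of $\mP_k$ is $\range{\mS_k}$ up to the nullspace issue governed by Assumption~\ref{ass:null}, but that is not needed for the identity itself.)

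There is really no hard part here; the argument is a one-line pseudoinverse manipulation. The only point that requires care is that $\mH\mH^\dagger g = g$ is false for a general vector $g$ and is available to us precisely because of Assumption~\ref{ass:range}; without it the sketched step and the projected full Newton step would differ by the sketched component of the part of $g(x_k)$ lying outside $\range{\mH(x_k)}$. Note that the algebraic core uses only Assumption~\ref{ass:range}; Assumption~\ref{ass:Newton} enters merely through Lemma~\ref{lem:descentrasunun}, ensuring the iterates remain in $\cQ$ where the relevant quantities are defined.
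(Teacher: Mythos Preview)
Your proposal is correct and follows essentially the same approach as the paper's proof: both verify that $\mP_k$ is a projection in the $\mH(x_k)$-seminorm via the pseudoinverse identity $A^\dagger A A^\dagger = A^\dagger$, and both reduce the update identity to $\mH(x_k)\mH^\dagger(x_k)g(x_k)=g(x_k)$, which is supplied by Assumption~\ref{ass:range}. The paper phrases the last step as $g(x_k)=-\mH(x_k)n(x_k)$ and substitutes, while you go in the other direction, but the content is identical; your remark that Assumption~\ref{ass:Newton} plays no role in the algebra is also accurate.
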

\begin{proof}
To verify that $\mP_k$ is an oblique projection matrix, it suffices to check that 
\[ \dotprod{ \mP_k x,\mP_k y}_{\mH(x_k)} = \dotprod{ \mP_k x, y}_{\mH(x_k)} ,\quad  \forall x, y \in \R^d, \] 
which in turn relies on the identity $\mM^\dagger \mM \mM^\dagger =\mM^\dagger, $ which holds for all matrices $\mM \in \R^{d\times d}.$ 
Since $g(x_k) \in \range{\mH(x_k)},$ we have again by the same identity of the pseudoinverse that
\begin{equation}\label{eq:geqHnewt}
 g(x_k) = \mH(x_k)\mH^\dagger (x_k) g(x_k) =  -\mH(x_k) n(x_k).
\end{equation}
Consequently, $\mP_k n(x_k) = \mS_k\left(\mS_k^\top \mH(x_k) \mS_k \right)^{\dagger}\mS_k^\top g(x^k). $
\end{proof}

We will refer to $\mP_k n(x_k)$ as the {\em sketched Newton direction}. 
If we add one more simple assumption to the selection of the sketching matrices, we have the following equivalent formulations of the sketched Newton direction.

\begin{lemma}\label{lem:viewpoints}
Let Assumptions~\ref{ass:Newton},~\ref{ass:range} and~\ref{ass:null} hold. 
It follows that the $x_{k+1}$ iterate of Algorithm~\ref{alg:RASUNE} can be equivalently seen as

1. The minimizer of $T(x , x_k)$ over the random subspace $x\in x_k + \range{\mS_k}:$ 
\begin{equation} \label{eq:lambdaleastTSk}
 x_{k+1}=x_k+ \mS_k \lambda_k, \quad \mbox{where} \quad \lambda_k \in  \arg \min_{\lambda \in \R^s} T\left(x_k + \mS_k \lambda , x_k \right).
\end{equation}
Furthermore,
\begin{equation}\label{eq:Txkplus1min}
T(x_{k+1},x_k) = f(x_k) -  \frac{1}{2\hat{L}}\norm{g(x_k)}_{\mS_k (\mS_k ^\top \mH(x_k)\mS_k)^{\dagger}\mS_k}^2.
\end{equation}

2. A projection of the Newton direction onto a random subspace:
\begin{eqnarray}
x_{k+1 } &=& \arg \min_{x \in \R^d, \; \lambda \in \R^s} \big\| x-\big(x_k -\frac{1}{\hat{L}}n(x_k)\big)\big\|_{\mH(x_k)}^2  
\quad \mbox{subject to}\quad x = x_k +  \mS_k\lambda. \label{eq:sketchprojNdual}
\end{eqnarray}

3.  A projection of the previous iterate onto  the \emph{sketched}  Newton system given by:
\begin{eqnarray}
x_{k+1} &\in & \arg \min \norm{x-x_k}_{\mH(x_k)}^2 
\quad  \mbox{subject to}\quad \mS_k^\top \mH(x_k) (x-x_k) = - \frac{1}{\hat{L}}\mS_k^\top g(x_k). \label{eq:sketchprojN}
\end{eqnarray}
Furthermore, if $\range{\mS_k} \subset \range{\mH_k(x_k)} $, then $x_{k+1}$ is the unique solution to the above.
\end{lemma}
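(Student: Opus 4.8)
The plan is to reduce all three characterizations to a single elementary fact and then do routine bookkeeping with the Moore--Penrose pseudoinverse. Writing $\mW_k\eqdef\mS_k^\top\mH(x_k)\mS_k$, the fact I would establish first is that $\mS_k^\top g(x_k)\in\range{\mW_k}$. Since $\mW_k$ is symmetric positive semidefinite, $\range{\mW_k}=\Null(\mW_k)^\perp$, and $\Null(\mW_k)=\Null(\mS_k)$ by Assumption~\ref{ass:null}; so it suffices to note that for any $v\in\Null(\mS_k)$ we have $\langle\mS_k^\top g(x_k),v\rangle=\langle g(x_k),\mS_k v\rangle=0$ (alternatively, Assumption~\ref{ass:range} gives the same conclusion through $\range{\mS_k^\top\mH(x_k)}=\range{\mW_k}$, using the factorization $\mW_k=(\mH(x_k)^{1/2}\mS_k)^\top(\mH(x_k)^{1/2}\mS_k)$). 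The two consequences I will keep reusing are that $\mW_k\mW_k^\dagger$ fixes $\mS_k^\top g(x_k)$, and, by the same argument applied to $\mS_k^\top\mH(x_k)z$, that $\mW_k\mW_k^\dagger$ fixes every vector of the form $\mS_k^\top\mH(x_k)z$; beyond this, everything reduces to the identity $\mW_k^\dagger\mW_k\mW_k^\dagger=\mW_k^\dagger$.

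Next, for viewpoint 1 I would substitute $x=x_k+\mS_k\lambda$ into the definition of $T$ in \eqref{eq:Lhat}, getting the convex quadratic $\lambda\mapsto f(x_k)+\langle\mS_k^\top g(x_k),\lambda\rangle+\tfrac{\hat{L}}{2}\lambda^\top\mW_k\lambda$. Its set of minimizers is exactly the solution set of the normal equation $\mW_k\lambda=-\tfrac1{\hat{L}}\mS_k^\top g(x_k)$, which is consistent by the fact above, and $\lambda_k\eqdef-\tfrac1{\hat{L}}\mW_k^\dagger\mS_k^\top g(x_k)$ is a particular solution, for which $x_k+\mS_k\lambda_k$ is precisely line~\ref{ln:rasunenx1} of Algorithm~\ref{alg:RASUNE}, i.e.\ $x_{k+1}$. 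Any other minimizer $\lambda'$ satisfies $\lambda'-\lambda_k\in\Null(\mW_k)=\Null(\mS_k)$, hence $\mS_k\lambda'=\mS_k\lambda_k$, so $x_{k+1}$ in \eqref{eq:lambdaleastTSk} is well defined. Substituting $\lambda_k$ back into the quadratic and collapsing the quadratic term with $\mW_k^\dagger\mW_k\mW_k^\dagger=\mW_k^\dagger$ yields \eqref{eq:Txkplus1min}.

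For viewpoint 2 I would simply combine Lemma~\ref{lem:projn}, which already gives $x_{k+1}=x_k+\tfrac1{\hat{L}}\mP_k n(x_k)$, with the best-approximation property of the oblique projection $\mP_k$ that is encoded in the identity verified in the proof of Lemma~\ref{lem:projn}: for any $y$, $\mP_k y\in\arg\min_{z\in\range{\mS_k}}\norm{y-z}_{\mH(x_k)}$, equivalently $\mS_k^\top\mH(x_k)(y-\mP_k y)=0$ (again using that $\mW_k\mW_k^\dagger$ fixes $\mS_k^\top\mH(x_k)y$). Taking $y=\tfrac1{\hat{L}}n(x_k)$ and translating the whole problem by $x_k$ turns this into \eqref{eq:sketchprojNdual}. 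For viewpoint 3 I put $h\eqdef x-x_k$; the constraint $\mS_k^\top\mH(x_k)h=-\tfrac1{\hat{L}}\mS_k^\top g(x_k)$ is exactly the normal equation from viewpoint 1, and $h=\mS_k\lambda_k$ satisfies it (since $\mW_k\lambda_k=-\tfrac1{\hat{L}}\mS_k^\top g(x_k)$), so it is feasible. Minimizing the seminorm $\norm{h}_{\mH(x_k)}$ subject to a linear equation is a minimum-seminorm problem, so $h=\mS_k\lambda_k$ — equivalently $x_{k+1}$ — is a solution by Lemma~\ref{lem:seminormsol}, whose uniqueness clause upgrades this to the unique solution once the extra hypothesis $\range{\mS_k}\subseteq\range{\mH(x_k)}$ removes the residual degeneracy coming from $\Null(\mH(x_k))$.

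The main obstacle is the range/pseudoinverse bookkeeping in the first step and its repeated reuse: getting $\mS_k^\top g(x_k)\in\range{\mW_k}$ exactly right (this is precisely what Assumptions~\ref{ass:range} and~\ref{ass:null} buy), and then being careful about \emph{which} vectors $\mW_k\mW_k^\dagger$ collapses to themselves, while remembering throughout that the sketched iterate is pinned down only modulo $\Null(\mS_k)$ (resp.\ $\Null(\mH(x_k))$) — which is exactly why the sharper uniqueness assertion in viewpoint 3 needs the additional range inclusion.
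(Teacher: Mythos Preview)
Parts 1 and 2 track the paper closely. For part 1 both you and the paper substitute $x=x_k+\mS_k\lambda$ into $T$, land on the same quadratic in $\lambda$, and solve the normal equation $\mW_k\lambda=-\tfrac1{\hat L}\mS_k^\top g(x_k)$ (the paper packages the solution and the optimal value as an appeal to Lemma~\ref{lem:seminormsol}, then kills the $\Null(\mW_k)$ ambiguity with Assumption~\ref{ass:null}, exactly as you do). For part 2 the paper plugs the constraint into the objective, expands, uses $g(x_k)=-\mH(x_k)n(x_k)$, and observes the resulting quadratic coincides with part 1; your route through the best-approximation property of the oblique projector $\mP_k$ from Lemma~\ref{lem:projn} is a legitimate and slightly more geometric variant of the same computation.

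Part 3 has a genuine gap. Problem~\eqref{eq:sketchprojN} is a \emph{constrained} seminorm minimization, whereas Lemma~\ref{lem:seminormsol} treats the \emph{unconstrained} quadratic $\min_x\langle g,x-y\rangle+\tfrac{c}{2}\|x-y\|_{\mH}^2$; its ``least norm solution'' clause~\eqref{eq:leastnormoaimima3} selects one point among the unconstrained minimizers, which is a different object. So the sentence ``$h=\mS_k\lambda_k$ is a solution by Lemma~\ref{lem:seminormsol}'' does not follow as written. You have correctly verified feasibility (since $\mS_k^\top\mH(x_k)\mS_k\lambda_k=\mW_k\lambda_k=-\tfrac1{\hat L}\mS_k^\top g(x_k)$), but you still owe optimality. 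The one-line fix inside your own framework is the orthogonality check: any feasible $h$ equals $\mS_k\lambda_k+w$ with $w\in\Null(\mS_k^\top\mH(x_k))$, and then
\[
\langle \mH(x_k)\mS_k\lambda_k,\,w\rangle=\langle \lambda_k,\,\mS_k^\top\mH(x_k)w\rangle=0,
\]
so $\|\mS_k\lambda_k+w\|_{\mH(x_k)}^2=\|\mS_k\lambda_k\|_{\mH(x_k)}^2+\|w\|_{\mH(x_k)}^2\ge\|\mS_k\lambda_k\|_{\mH(x_k)}^2$, with equality forcing $w\in\Null(\mH(x_k))$ (which is where the extra hypothesis $\range{\mS_k}\subset\range{\mH(x_k)}$ enters for the sharper claim). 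The paper instead derives part 3 via the Lagrangian: differentiate $\|x-x_k\|_{\mH(x_k)}^2+\langle\mu,\text{constraint}\rangle$ in $x$, combine the stationarity condition $\mH(x_k)(x-x_k)+\mH(x_k)\mS_k\mu=0$ with the constraint to get $\mW_k\mu=\tfrac1{\hat L}\mS_k^\top g(x_k)$, solve for $\mu$ modulo $\Null(\mW_k)$, and collapse that nullspace with Assumption~\ref{ass:null}. Either argument works; Lemma~\ref{lem:seminormsol} alone does not.
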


\section{Convergence Theory}
We now present  two main convergence theorems. 
\begin{theorem}\label{theo:rasunen}
 Let $\mG(x)~\eqdef~\EE{\mS \sim \cD}{\mS \left(\mS ^\top \mH(x)\mS \right)^{\dagger}\mS}$ and define
\begin{equation} \label{eq:rhok}
\rho(x) \eqdef \min_{v \in \range{\mH(x)}}\frac{\dotprod{ \mH^{1/2}(x)\mG(x) \mH^{1/2}(x)v, v}}{\norm{v}_2^2} \qquad \mbox{and} \qquad \rho \eqdef \min_{x \in \cQ} \rho(x).\end{equation}
 If Assumptions~\ref{ass:Newton} and~\ref{ass:range} hold, then
\begin{equation} \label{eq:RSNkstepconv}
\EE{}{f(x_{k})} - f_* \leq \left(1-\rho \frac{\hat{\mu} }{\hat{L}} \right)^{k} (f(x_{0}) -f_*).
\end{equation}
 Consequently, given $\epsilon>0$, if $\rho>0$ and if 
\begin{equation}\label{eq:itercomplex}
k \geq \frac{1}{\rho}\frac{\hat{L}}{\hat{\mu}} \log\left(\frac{f(x_0) -f_*}{\epsilon} \right),
\qquad \mbox{then} \qquad \E{f(x_k) - f_*} < \epsilon.
\end{equation}
\end{theorem}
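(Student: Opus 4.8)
The plan is to run a standard ``one-step progress plus telescoping'' argument. First I would fix iteration $k$, condition on $x_k \in \cQ$ (which holds by the descent Lemma~\ref{lem:descentrasunun}), and bound $\E{f(x_{k+1}) \mid x_k}$ from above. Since $\gamma = \nicefrac{1}{\hat L}$ and Lemma~\ref{lem:viewpoints} identifies $x_{k+1}$ as the minimizer of $T(\cdot,x_k)$ over $x_k + \range{\mS_k}$, the relative smoothness bound~\eqref{eq:Lhat} gives $f(x_{k+1}) \le T(x_{k+1},x_k)$, and~\eqref{eq:Txkplus1min} evaluates the right-hand side exactly:
\begin{equation*}
f(x_{k+1}) \le f(x_k) - \frac{1}{2\hat L}\norm{g(x_k)}^2_{\mS_k(\mS_k^\top \mH(x_k)\mS_k)^\dagger \mS_k}.
\end{equation*}
Taking expectation over $\mS_k \sim \cD$ (conditionally on $x_k$) and using linearity, the quadratic term becomes $\dotprod{\mG(x_k) g(x_k), g(x_k)}$, so
\begin{equation*}
\E{f(x_{k+1}) \mid x_k} \le f(x_k) - \frac{1}{2\hat L}\dotprod{\mG(x_k) g(x_k), g(x_k)}.
\end{equation*}

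Next I would lower-bound $\dotprod{\mG(x_k) g(x_k), g(x_k)}$ using the definition of $\rho(x_k)$. Write $g(x_k) = \mH^{1/2}(x_k) v$ for some $v \in \range{\mH(x_k)}$, which is possible by Assumption~\ref{ass:range}; then $\dotprod{\mG(x_k)g(x_k),g(x_k)} = \dotprod{\mH^{1/2}(x_k)\mG(x_k)\mH^{1/2}(x_k) v, v} \ge \rho(x_k)\norm{v}_2^2 = \rho(x_k)\norm{g(x_k)}^2_{\mH^\dagger(x_k)} \ge \rho \norm{g(x_k)}^2_{\mH^\dagger(x_k)}$, the last step because $x_k \in \cQ$. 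Combining, $\E{f(x_{k+1})\mid x_k} \le f(x_k) - \frac{\rho}{2\hat L}\norm{g(x_k)}^2_{\mH^\dagger(x_k)}$. To convert this gradient-norm decrease into a functional decrease, I would invoke the relative convexity inequality~\eqref{eq:muhat}: minimizing its right-hand side over $x$ (a quadratic in $x-y$ whose minimizer is $y - \frac{1}{\hat\mu}\mH^\dagger(y)g(y)$, using $g(y)\in\range{\mH(y)}$) with $y = x_k$ and noting $f_* \le f(x)$ for all $x$ yields $f_* \ge f(x_k) - \frac{1}{2\hat\mu}\norm{g(x_k)}^2_{\mH^\dagger(x_k)}$, i.e. $\norm{g(x_k)}^2_{\mH^\dagger(x_k)} \ge 2\hat\mu(f(x_k)-f_*)$. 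Substituting gives the one-step contraction
\begin{equation*}
\E{f(x_{k+1}) \mid x_k} - f_* \le \Bigl(1 - \rho\frac{\hat\mu}{\hat L}\Bigr)\bigl(f(x_k) - f_*\bigr).
\end{equation*}

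Finally, I would take total expectation, use the tower property, and unroll the recursion from $k$ down to $0$ to obtain~\eqref{eq:RSNkstepconv}. The iteration-complexity statement~\eqref{eq:itercomplex} then follows by the elementary estimate $(1-t)^k \le e^{-tk}$ with $t = \rho\hat\mu/\hat L \in (0,1]$: requiring $e^{-tk}(f(x_0)-f_*) < \epsilon$ gives $k > \frac{1}{t}\log\frac{f(x_0)-f_*}{\epsilon}$, which is exactly the stated bound.

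The main obstacle, and the only place needing genuine care, is justifying the exact evaluation $\E{f(x_{k+1})\mid x_k} \le f(x_k) - \frac{1}{2\hat L}\dotprod{\mG(x_k)g(x_k),g(x_k)}$ — in particular that one may pull the expectation inside to replace $\mS_k(\mS_k^\top\mH(x_k)\mS_k)^\dagger\mS_k$ by $\mG(x_k)$; this rests on~\eqref{eq:Txkplus1min} from Lemma~\ref{lem:viewpoints}, whose derivation in turn uses the nullspace-preserving Assumption~\ref{ass:null} (note that Theorem~\ref{theo:rasunen} only assumes~\ref{ass:Newton} and~\ref{ass:range}, so one should check that $\mG(x)$ is well-defined and that the identification of $x_{k+1}$ as a subspace minimizer still goes through, or restrict to almost-sure statements). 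Everything else is routine quadratic minimization and telescoping.
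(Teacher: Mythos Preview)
Your argument is correct and follows essentially the same route as the paper's proof: relative smoothness plus \eqref{eq:Txkplus1min} to get the per-step decrease, conditional expectation to produce $\mG(x_k)$, the Rayleigh-quotient bound via $\rho(x_k)$ after writing $g(x_k)=\mH^{1/2}(x_k)v$ with $v\in\range{\mH(x_k)}$, relative convexity \eqref{eq:muhat} minimized in $x$ to convert $\norm{g(x_k)}^2_{\mH^\dagger(x_k)}$ into $2\hat\mu(f(x_k)-f_*)$, and then tower plus unrolling.

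Your closing remark about Assumption~\ref{ass:null} is well taken and in fact applies to the paper as well: the paper's proof invokes \eqref{eq:Txkplus1min} (from Lemma~\ref{lem:viewpoints}) and Lemma~\ref{lem:descentrasunun}, both of which are stated under Assumption~\ref{ass:null}, even though Theorem~\ref{theo:rasunen} lists only Assumptions~\ref{ass:Newton} and~\ref{ass:range}. One can verify directly that the identity $T(x_{k+1},x_k)=f(x_k)-\frac{1}{2\hat L}\norm{g(x_k)}^2_{\mS_k(\mS_k^\top\mH(x_k)\mS_k)^\dagger\mS_k^\top}$ holds for the algorithmic update without Assumption~\ref{ass:null} (using only $\mM^\dagger\mM\mM^\dagger=\mM^\dagger$), so the real dependence is through the descent property $x_{k+1}\in\cQ$ needed to apply \eqref{eq:Lhat}; this is the point that genuinely leans on the extra assumption.
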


Theorem~\ref{theo:rasunen} includes the convergence of the full Newton method as a special case. Indeed, when we choose\footnote{Or when $\mS_k$ is an invertible matrix.} $\mS_k = \mI \in \R^{d\times d}$, it is not hard to show that $\rho(x_k) \equiv 1$, and thus~\eqref{eq:itercomplex} recovers the $\nicefrac{\hat{L}}{\hat{\mu}}\log\left(\nicefrac{1}{\epsilon} \right)$ complexity given in~\cite{KSJ-Newton2018}. We provide yet an additional sublinear $\cO(\nicefrac{1}{k})$ convergence result that holds even when $\hat{\mu} =0.$
 
\begin{theorem} \label{theo:sublinear}
Let Assumption~\ref{ass:range} hold and Assumption~\ref{ass:Newton} be satisfied with $\hat{L} > \hat{\mu} = 0$. 
	If
	\begin{equation}\label{eq:2}
		\cR \eqdef \inf_{x_* \in \cX_*} \sup\limits_{x \in \cQ} \norm{x - x_*}_{\mH(x)} < +\infty\;,
	\end{equation}
	and $\rho >0$ then
$
	\displaystyle \E{f(x_k)} - f_* \leq \frac{2\hat{L}\cR^2}{\rho k}
$.
\end{theorem}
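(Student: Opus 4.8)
The plan is to mimic the standard analysis of randomized subspace / coordinate descent methods, but carried out with respect to the local Hessian seminorm rather than the Euclidean norm. First I would start from the progress inequality implied by relative smoothness: from Lemma~\ref{lem:viewpoints}, equation~\eqref{eq:Txkplus1min}, combined with $f(x_{k+1}) \leq T(x_{k+1},x_k)$ from~\eqref{eq:Lhat}, we obtain the one-step decrease
\begin{equation*}
f(x_{k+1}) \leq f(x_k) - \frac{1}{2\hat{L}} \norm{g(x_k)}^2_{\mS_k(\mS_k^\top \mH(x_k)\mS_k)^\dagger \mS_k}.
\end{equation*}
Taking conditional expectation over $\mS_k \sim \cD$ and using the definition of $\mG(x_k)$, the expected decrease is governed by $\langle \mG(x_k) g(x_k), g(x_k)\rangle$. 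The next step is to lower-bound this quantity by $\rho(x_k)$ times something controllable: writing $g(x_k) = \mH^{1/2}(x_k) v$ for some $v \in \range{\mH(x_k)}$ (possible by Assumption~\ref{ass:range}), the definition of $\rho(x)$ in~\eqref{eq:rhok} gives $\langle \mG(x_k)g(x_k),g(x_k)\rangle = \langle \mH^{1/2}\mG\mH^{1/2}v,v\rangle \geq \rho(x_k)\norm{v}_2^2 = \rho(x_k)\norm{g(x_k)}^2_{\mH^\dagger(x_k)}$, and then $\rho(x_k)\geq \rho$ since $x_k \in \cQ$ (by the descent Lemma~\ref{lem:descentrasunun}, though in the sublinear case we only need the descent property, which follows from the same Lemma). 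So after taking expectations,
\begin{equation*}
\E{f(x_{k+1})} \leq \E{f(x_k)} - \frac{\rho}{2\hat{L}} \E{\norm{g(x_k)}^2_{\mH^\dagger(x_k)}}.
\end{equation*}

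The third step is where the $\hat{\mu}=0$ case diverges from Theorem~\ref{theo:rasunen}: instead of using relative convexity with $\hat{\mu}>0$ to get a PL-type inequality, I would use~\eqref{eq:muhat} with $\hat{\mu}=0$, i.e. plain convexity in the form $f(x_*) \geq f(x_k) + \langle g(x_k), x_* - x_k\rangle$, to bound the suboptimality $f(x_k) - f_*$ in terms of $\norm{g(x_k)}_{\mH^\dagger(x_k)}$. By Cauchy--Schwarz in the (pseudo)norms $\norm{\cdot}_{\mH(x_k)}$ and $\norm{\cdot}_{\mH^\dagger(x_k)}$ — using that $g(x_k)\in\range{\mH(x_k)}$ so these are genuinely dual — we get
\begin{equation*}
f(x_k) - f_* \leq \langle g(x_k), x_k - x_*\rangle \leq \norm{g(x_k)}_{\mH^\dagger(x_k)} \norm{x_k - x_*}_{\mH(x_k)} \leq \cR\, \norm{g(x_k)}_{\mH^\dagger(x_k)},
\end{equation*}
where the last step uses the definition of $\cR$ in~\eqref{eq:2} (choosing $x_*$ to be, or approximate, the infimizing minimizer, and using $x_k \in \cQ$). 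Squaring, $\norm{g(x_k)}^2_{\mH^\dagger(x_k)} \geq (f(x_k)-f_*)^2/\cR^2$, and plugging into the expected decrease gives, with $\delta_k \eqdef \E{f(x_k)} - f_*$ and using Jensen to pass the square outside the expectation,
\begin{equation*}
\delta_{k+1} \leq \delta_k - \frac{\rho}{2\hat{L}\cR^2}\delta_k^2.
\end{equation*}

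The final step is the standard recursion trick: dividing by $\delta_k \delta_{k+1}>0$ and using $\delta_{k+1}\leq\delta_k$ yields $\frac{1}{\delta_{k+1}} \geq \frac{1}{\delta_k} + \frac{\rho}{2\hat{L}\cR^2}$, so by telescoping $\frac{1}{\delta_k} \geq \frac{\rho k}{2\hat{L}\cR^2}$, i.e. $\delta_k \leq \frac{2\hat{L}\cR^2}{\rho k}$, which is exactly the claim. I expect the main obstacle to be the care needed in the dual-norm Cauchy--Schwarz step and in handling the pseudoinverse: one must verify that $\norm{g(x_k)}_{\mH^\dagger(x_k)}$ is well-defined and that the inequality $\langle a, b\rangle \leq \norm{a}_{\mH^\dagger}\norm{b}_{\mH}$ holds when $a\in\range{\mH}$ (it does, by diagonalizing $\mH$ on its range), and also the minor subtlety that the infimum over $x_*\in\cX_*$ in the definition of $\cR$ may not be attained, so one should either work with an $\varepsilon$-approximate minimizer and let $\varepsilon\to 0$ at the end, or note $\cX_*$ is closed so a minimizing sequence suffices. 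The positivity of all $\delta_k$ (needed to divide) follows because if some $\delta_k=0$ the bound is trivial from monotonicity; everything else is routine.
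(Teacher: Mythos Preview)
Your proposal is correct and follows essentially the same route as the paper's proof: the one-step decrease~\eqref{eq:Txkplus1min} combined with~\eqref{eq:Lhat}, the lower bound $\langle \mG(x_k)g(x_k),g(x_k)\rangle \ge \rho\,\|g(x_k)\|^2_{\mH^\dagger(x_k)}$ via Assumption~\ref{ass:range} and the definition of $\rho$, the convexity--Cauchy--Schwarz bound $f(x_k)-f_*\le \cR\|g(x_k)\|_{\mH^\dagger(x_k)}$, Jensen, and the telescoping of $1/\delta_{k+1}-1/\delta_k$. The only cosmetic difference is that the paper first rewrites the decrease as $\E{\|g(x_k)\|^2_{\mH^\dagger(x_k)}}\le \tfrac{2\hat L}{\rho}\,\E{f(x_k)-f(x_{k+1})}$ before combining with the squared convexity bound, whereas you substitute in the other order; the algebra is identical.
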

As a new result of Theorem~\ref{theo:sublinear}, we can also show that the full Newton method has a $O(\hat{L}\cR\epsilon^{-1})$ iteration complexity.

Both of the above theorems rely on $\rho >0$. So 
 in the next Section~\ref{sec:sketchcondition} we give sufficient conditions for $\rho >0$ that holds for virtually all sketching matrices.

\subsection{The sketched condition number $\rho(x_k)$} \label{sec:sketchcondition}
The parameters $\rho(x_k)$ and $\rho$ in Theorem~\ref{theo:rasunen}  characterize the trade-off between the cost of the iterations and  the convergence rate of  RSN. Here we show that $\rho$ is always bounded between one and zero, and further, we give conditions under which $\rho(x_k)$ is the smallest \emph{non-zero} eigenvalue of an expected projection matrix, and is thus bounded away from zero. 

\begin{lemma} \label{lem:rholambdamin}
The parameter $\rho(x_k)$ appearing  in Theorem~\ref{theo:rasunen} satisfies $0 \leq \rho(x_k)  \leq  1.$
Letting
\begin{equation} \label{eq:Phatprojdef}
\hat{\mP}(x_k) \eqdef \mH^{1/2}(x_k)\mS_k \left(\mS_k^\top \mH(x_k)\mS_k\right)^{\dagger}\mS_k^\top  \mH^{1/2}(x_k)\;,
\end{equation}
and if we assume that the {\em exactness}\footnote{An ``exactness'' condition similar to \eqref{eq:exactness} was introduced in \cite{ASDA} in a program of  ``exactly'' reformulating a linear system into a stochastic optimization problem. Our condition has a similar meaning, but we do not elaborate on this as this is not central to the developments in this paper.} condition
\begin{equation}\label{eq:exactness}
\Range \left( \mH(x_k) \right)=  \Range \left(\EE{\mS \sim \cD}{\hat{\mP}(x_k)} \right)
\end{equation}
holds then $\rho(x_k)= \lambda_{\min}^+ \left(\EE{\mS \sim \cD}{\hat{\mP}(x_k)} \right)>0$.
\end{lemma}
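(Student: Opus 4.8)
The plan is to prove the two assertions of Lemma~\ref{lem:rholambdamin} separately, exploiting the self-adjoint structure of the matrix $\hat{\mP}(x_k)$ defined in \eqref{eq:Phatprojdef}. First observe that $\hat{\mP}(x_k)$ is an orthogonal projection matrix (in the standard Euclidean inner product): this follows from the pseudoinverse identity $\mM^\dagger \mM \mM^\dagger = \mM^\dagger$ applied to $\mM = \mS_k^\top \mH(x_k)\mS_k$, exactly as in the proof of Lemma~\ref{lem:projn}, together with the fact that $\hat{\mP}(x_k)^\top = \hat{\mP}(x_k)$ by inspection. Hence $\hat{\mP}(x_k)$ has eigenvalues in $\{0,1\}$, and so $\zeros \preceq \EE{\mS\sim\cD}{\hat{\mP}(x_k)} \preceq \mI$, which gives $0 \le \lambda_{\min}^+\!\left(\EE{\mS\sim\cD}{\hat{\mP}(x_k)}\right) \le 1$ and more generally all eigenvalues of the expectation lie in $[0,1]$.

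Next I would connect $\rho(x_k)$ to $\EE{\mS\sim\cD}{\hat{\mP}(x_k)}$. Expanding the definition \eqref{eq:rhok}, the numerator is $\dotprod{\mH^{1/2}(x_k)\mG(x_k)\mH^{1/2}(x_k)v,v}$; substituting $\mG(x_k) = \EE{\mS\sim\cD}{\mS(\mS^\top\mH(x_k)\mS)^\dagger\mS^\top}$ and pushing the deterministic factors $\mH^{1/2}(x_k)$ inside the expectation yields exactly $\dotprod{\EE{\mS\sim\cD}{\hat{\mP}(x_k)}\,v,v}$. Therefore
\[
\rho(x_k) = \min_{v \in \range{\mH(x_k)}} \frac{\dotprod{\EE{\mS\sim\cD}{\hat{\mP}(x_k)}\,v,v}}{\norm{v}_2^2}.
\]
Since $\EE{\mS\sim\cD}{\hat{\mP}(x_k)}$ is positive semidefinite, the unconstrained minimum of this Rayleigh quotient over all $v \ne \zeros$ is either $0$ (if the matrix is singular on $\R^d$) or its smallest eigenvalue; restricting $v$ to $\range{\mH(x_k)}$ removes part of the kernel. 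Combined with the bounds from the first paragraph this already proves $0 \le \rho(x_k) \le 1$ without needing exactness.

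For the second, more delicate assertion I would argue that under the exactness condition \eqref{eq:exactness} the restricted minimum equals $\lambda_{\min}^+\!\left(\EE{\mS\sim\cD}{\hat{\mP}(x_k)}\right)$. The point is that $\range{\EE{\mS\sim\cD}{\hat{\mP}(x_k)}}$ is always orthogonal to $\Null{\EE{\mS\sim\cD}{\hat{\mP}(x_k)}}$ (the matrix is symmetric), and since each $\hat{\mP}(x_k)$ satisfies $\range{\hat{\mP}(x_k)}\subseteq\range{\mH^{1/2}(x_k)}=\range{\mH(x_k)}$ almost surely, the range of the expectation is contained in $\range{\mH(x_k)}$; the exactness hypothesis upgrades this inclusion to equality. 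So on the subspace $\range{\mH(x_k)}$ the operator $\EE{\mS\sim\cD}{\hat{\mP}(x_k)}$ restricts to a symmetric positive \emph{definite} map (its kernel is the orthogonal complement of its range, which meets $\range{\mH(x_k)}$ trivially), whence minimizing the Rayleigh quotient over $v\in\range{\mH(x_k)}$ gives precisely $\lambda_{\min}^+$, which is strictly positive. The main obstacle is this last range/nullspace bookkeeping: one has to be careful that $v$ ranging over $\range{\mH(x_k)}$ together with $\range{\EE{\mS}{\hat{\mP}}}=\range{\mH(x_k)}$ forces the decomposition of any such $v$ to have no component in the kernel, so that the infimum is attained at a genuine eigenvector with nonzero eigenvalue rather than being dragged to $0$; the symmetry of $\hat{\mP}(x_k)$ and the consequent orthogonality of range and kernel are what make this work, and that is the step I would write out most carefully.
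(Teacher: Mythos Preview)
Your proposal is correct and follows essentially the same route as the paper: identify $\mH^{1/2}(x_k)\mG(x_k)\mH^{1/2}(x_k)=\EE{\mS\sim\cD}{\hat{\mP}(x_k)}$, then use exactness to rewrite the constraint set $\range{\mH(x_k)}$ as $\Null\big(\EE{\mS\sim\cD}{\hat{\mP}(x_k)}\big)^\perp$ so that the restricted Rayleigh quotient equals $\lambda_{\min}^+$. Your argument is in fact more complete than the paper's, since you explicitly derive the bounds $0\le\rho(x_k)\le1$ from the orthogonal-projection property of $\hat{\mP}(x_k)$ (hence $\zeros\preceq\EE{\mS\sim\cD}{\hat{\mP}(x_k)}\preceq\mI$), whereas the paper's proof only treats the second assertion.
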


Since~\eqref{eq:exactness} is in general hard to verify, we give simpler sufficient conditions for $\rho>0$ in the next lemma.

\begin{lemma}[Sufficient condition for exactness] \label{lem:sufficientrhorate}
If Assumption~\ref{ass:null}
and
\begin{equation}\label{eq:11}
\Range \left(\mH(x_k)\right) \subset \Range \left(\Exp{\mS_k\mS_k^\top}\right),
\end{equation}
holds then~\eqref{eq:exactness} holds and consequently $0 <\rho \leq 1.$ 
\end{lemma}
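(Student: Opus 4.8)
The plan is to show that condition \eqref{eq:11}, together with Assumption~\ref{ass:null}, forces the two ranges in the exactness condition \eqref{eq:exactness} to coincide; the positivity of $\rho$ then follows immediately from Lemma~\ref{lem:rholambdamin}. Throughout, fix $x_k \in \cQ$ and write $\mH = \mH(x_k)$, $\hat{\mP} = \hat{\mP}(x_k)$ for brevity. First I would record the easy inclusion: since each $\hat{\mP}$ is of the form $\mH^{1/2}\mS(\mS^\top\mH\mS)^\dagger\mS^\top\mH^{1/2}$, every realization satisfies $\range{\hat{\mP}} \subset \range{\mH^{1/2}} = \range{\mH}$ (the last equality because $\mH$ is symmetric positive semidefinite). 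Taking expectations and using that the range of an expectation of PSD matrices is the sum of the ranges, we get $\range{\EE{\mS\sim\cD}{\hat{\mP}}} \subset \range{\mH}$. So the real content is the reverse inclusion $\range{\mH} \subset \range{\EE{\mS\sim\cD}{\hat{\mP}}}$.

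For the reverse inclusion, the key observation is that $\hat{\mP}$ is a (symmetric, hence orthogonal) projection matrix: using the pseudoinverse identity $\mM^\dagger \mM \mM^\dagger = \mM^\dagger$ with $\mM = \mS^\top\mH\mS$ one checks $\hat{\mP}^2 = \hat{\mP} = \hat{\mP}^\top$. Therefore $\range{\hat{\mP}}$ equals the set of its fixed points, and for a vector $v$ we have $v \in \range{\EE{}{\hat{\mP}}}$ iff $v$ is not orthogonal to that range; equivalently, since $\EE{}{\hat{\mP}} \succeq 0$, we have $v \in \Null(\EE{}{\hat{\mP}})$ iff $\langle \EE{}{\hat{\mP}} v, v\rangle = 0$ iff $\hat{\mP} v = 0$ almost surely. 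Now I claim Assumption~\ref{ass:null} gives $\range{\hat{\mP}} = \range{\mH^{1/2}\mS}$ almost surely: indeed \eqref{eq:NullSHS} says $\Null(\mS^\top\mH\mS) = \Null(\mS)$, from which $\range{(\mS^\top\mH\mS)^\dagger\mS^\top\mH^{1/2}} = \range{\mS^\top\mH^{1/2}}$ after a short pseudoinverse argument, and applying $\mH^{1/2}\mS$ yields $\range{\hat{\mP}} = \range{\mH^{1/2}\mS\mS^\top\mH^{1/2}} = \range{\mH^{1/2}\mS}$. Hence $\hat{\mP} v = 0$ a.s.\ is equivalent to $\mS^\top\mH^{1/2} v = 0$ a.s. Squaring and taking expectations, this is equivalent to $\langle \EE{}{\mS\mS^\top} \mH^{1/2} v, \mH^{1/2} v\rangle = 0$, i.e.\ $\mH^{1/2} v \in \Null(\EE{}{\mS\mS^\top}) = \range{\EE{}{\mS\mS^\top}}^\perp$. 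But if moreover $v \in \range{\mH}=\range{\mH^{1/2}}$, then \eqref{eq:11} forces $\mH^{1/2}v = 0$, hence $v \in \Null(\mH^{1/2}) = \Null(\mH) = \range{\mH}^\perp$; combined with $v\in\range{\mH}$ this gives $v=0$. Thus $\Null(\EE{}{\hat{\mP}}) \cap \range{\mH} = \{0\}$, and since we already have $\range{\EE{}{\hat{\mP}}} \subset \range{\mH}$, comparing dimensions (or taking orthogonal complements within $\range{\mH}$) gives $\range{\EE{}{\hat{\mP}}} = \range{\mH}$, which is exactly \eqref{eq:exactness}.

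With \eqref{eq:exactness} established for every $x_k \in \cQ$, Lemma~\ref{lem:rholambdamin} gives $\rho(x_k) = \lambda_{\min}^+(\EE{\mS\sim\cD}{\hat{\mP}(x_k)}) > 0$, and Lemma~\ref{lem:rholambdamin} also gives the upper bound $\rho(x_k) \leq 1$; taking the infimum over $x_k \in \cQ$ yields $0 < \rho \leq 1$, completing the proof. The main obstacle I anticipate is the careful bookkeeping of ranges and nullspaces of products involving $\mH^{1/2}$, $\mS$, and the pseudoinverse $(\mS^\top\mH\mS)^\dagger$ — in particular verifying $\range{\hat{\mP}} = \range{\mH^{1/2}\mS}$ cleanly from Assumption~\ref{ass:null}, since naive manipulations of pseudoinverses of products can fail; one should lean on the fact that $\hat{\mP}$ is an orthogonal projection so that its range is determined by "where it acts as the identity," which sidesteps the trickier identities. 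A secondary subtlety is the passage from "$\hat{\mP} v = 0$ almost surely" to the statement about $\EE{}{\mS\mS^\top}$, which requires the nullspace-preserving property precisely so that $\hat{\mP} v$ and $\mS^\top\mH^{1/2}v$ vanish together.
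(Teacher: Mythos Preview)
Your proof is correct and follows essentially the same route as the paper: both arguments use that an expectation of PSD matrices has nullspace equal to the almost-sure intersection of nullspaces, then invoke Assumption~\ref{ass:null} to reduce each individual nullspace to $\Null(\mS^\top)$, and finally use \eqref{eq:11} to compare with $\Range(\mH)$. The only cosmetic difference is that the paper first characterises $\Null(\mG(x_k))$ and then applies Lemma~\ref{lem:09709s} to pass to $\E{\hat{\mP}(x_k)} = \mH^{1/2}\mG(x_k)\mH^{1/2}$, whereas you work directly with the orthogonal projection $\hat{\mP}(x_k)$ and its range $\Range(\mH^{1/2}\mS)$; the underlying linear algebra is the same.
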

Clearly, condition~\eqref{eq:11} is immediately satisfied if $\E{\mS_k\mS_k^\top}$ is invertible, and this is the case for Gaussian sketches, weighted coordinate sketched, sub-sampled Hadamard or Fourier transforms, and the entire class of  randomized orthonormal system sketches~\cite{Pilanci2014}.

\subsection{The relative smoothness and strong convexity constants}

In the next lemma we give an insightful formula for calculating the relative smoothness and convexity constants defined in Assumption~\ref{ass:Newton}, and in particular, show how $\hat{L}$ and $\hat{\mu}$ depend on the \emph{relative change} of the Hessian.

\begin{lemma}\label{lem:hessLhatmuhat}
Let $f$ be twice differentiable,  satisfying Assumption~\ref{ass:Newton}. If moreover $\mH(x)$ is invertible for every $x\in\R^d$, then 
\begin{eqnarray}\label{eq:Lhatexact}
\hat{L} & = & \max_{\scriptsize
\begin{array}{c}
 x,y\in \cQ
\end{array}}\int_{t=0}^1 2(1-t)\frac{ \|z_t-y\|^2_{\mH(z_t)}}{\|z_t-y\|^2_{\mH(y)}} dt \; \leq \; \max_{x,y \in \cQ} \frac{ \|x-y\|^2_{\mH(x)}}{\|x-y\|^2_{\mH(y)}}\eqdef c \\
\hat{\mu} &= & \min_{\scriptsize
\begin{array}{c}
 x,y\in \cQ
\end{array}}\int_{t=0}^1 2(1-t) \frac{ \|z_t-y\|^2_{\mH(z_t)}}{\|z_t-y\|^2_{\mH(y)}} dt  \; \geq \; \frac{1}{c},\label{eq:muhatexact}
\end{eqnarray}
where $z_t \eqdef y +t(x-y).$
\end{lemma}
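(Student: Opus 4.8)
The plan is to combine the integral form of Taylor's theorem with the elementary observation that, along a line segment, the ratio of two Hessian quadratic forms depends only on the segment's direction.

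First I would fix $x,y\in\cQ$, set $\phi(t)\eqdef f(y+t(x-y))$, and use twice differentiability, which gives $\phi''(t)=\|x-y\|^2_{\mH(z_t)}$ with $z_t\eqdef y+t(x-y)$. The second-order Taylor expansion with integral remainder, $\phi(1)=\phi(0)+\phi'(0)+\int_0^1(1-t)\phi''(t)\,dt$, then reads
\[f(x)-f(y)-\langle g(y),x-y\rangle=\int_0^1(1-t)\,\|x-y\|^2_{\mH(z_t)}\,dt.\]
Since $f$ is convex and $\mH(y)$ is invertible, $\mH(y)\succ 0$, so $\|x-y\|^2_{\mH(y)}>0$ whenever $x\neq y$, and the ratios below are well defined. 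Reading Assumption~\ref{ass:Newton} with $\hat{L}$ (resp.\ $\hat{\mu}$) taken as the \emph{best} admissible constant — the smallest (resp.\ largest) one — it is equivalent to
\[\hat{L}=\sup_{x,y\in\cQ,\, x\neq y}\frac{2\big(f(x)-f(y)-\langle g(y),x-y\rangle\big)}{\|x-y\|^2_{\mH(y)}},\]
and the analogous identity with $\inf$ in place of $\sup$ for $\hat{\mu}$.

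Substituting the Taylor identity and moving the $t$-independent factor $\|x-y\|^2_{\mH(y)}$ inside the integral yields $\hat{L}=\sup_{x,y}\int_0^1 2(1-t)\frac{\|x-y\|^2_{\mH(z_t)}}{\|x-y\|^2_{\mH(y)}}\,dt$, and likewise for $\hat{\mu}$. The only structural step is that $z_t-y=t(x-y)$ is a positive multiple of $x-y$, so the two quadratic-form ratios coincide, $\frac{\|x-y\|^2_{\mH(z_t)}}{\|x-y\|^2_{\mH(y)}}=\frac{\|z_t-y\|^2_{\mH(z_t)}}{\|z_t-y\|^2_{\mH(y)}}$ for $t\in(0,1]$, which (agreeing off a single point, the integrals are unaffected) produces the claimed closed forms. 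For the bounds I would use that $\cQ$, a sublevel set of the convex function $f$, is convex, hence $z_t\in\cQ$ for all $t\in[0,1]$; then $(z_t,y)\in\cQ\times\cQ$ and the definition of $c$ give $\tfrac1c\le\frac{\|z_t-y\|^2_{\mH(z_t)}}{\|z_t-y\|^2_{\mH(y)}}\le c$ pointwise in $t$ (the lower bound by applying the definition of $c$ with the two points interchanged). Integrating against the weight $2(1-t)$, whose total mass is $\int_0^1 2(1-t)\,dt=1$, gives $\hat{L}\le c$ and $\hat{\mu}\ge\tfrac1c$.

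I do not expect a serious obstacle; the care lies in three bookkeeping points: (i) the identities are genuine equalities only once $\hat{L},\hat{\mu}$ in Assumption~\ref{ass:Newton} are read as the extremal admissible constants; (ii) invertibility of $\mH$ is exactly what keeps the denominators nonzero, so every ratio makes sense; and (iii) the ``along-the-segment'' rewriting of the quadratic-form ratio is what makes the integrand self-referential, allowing the definition of $c$ to be applied verbatim to the pair $(z_t,y)$. One should also read $\max/\min$ as $\sup/\inf$ unless $\cQ$ happens to be compact.
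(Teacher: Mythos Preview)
Your proposal is correct and follows essentially the same route as the paper: Taylor's integral remainder, division by $\|x-y\|^2_{\mH(y)}$, the rewriting via $z_t-y=t(x-y)$, and then the pointwise bound using $z_t\in\cQ$ integrated against the unit-mass weight $2(1-t)$. Your bookkeeping remarks (extremal reading of $\hat L,\hat\mu$; the harmless $t=0$ indeterminacy; $\sup/\inf$ versus $\max/\min$) are apt refinements of exactly the argument the paper gives.
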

The constant $c$ on the right hand side of~\eqref{eq:Lhatexact}  is known as the $c$-stability constant~\cite{KSJ-Newton2018}. As a by-product, the above lemma   establishes that the rates for the deterministic  Newton method obtained as a special case of our general theorems are at least as good as those obtained in \cite{KSJ-Newton2018} using $c$-stability.

\section{Examples}

With the freedom of choosing the sketch size, we can consider the extreme case $s=1$, i.e., the case with the sketching matrices having only a single column.


\begin{corollary}[Single column sketches]\label{cor:singlevecsketch}
Let $0 \prec \mU \in \R^{n\times n}$ be a symmetric positive definite matrix such that $\mH(x) \preceq \mU, \; \forall x \in \R^d$.
Let $\mD =[d_1,\ldots, d_n] \in \R^{n\times n}$ be
a given invertible matrix such that $d_i^\top \mH(x) d_i \neq 0$ for all $x \in \cQ$ and $i =1,\ldots, n$. If we sample according to \[\Prob{\mS_k = d_i} \;=\; p_i \;\eqdef \; \frac{d_i^\top \mU d_i}{\trace{\mD^\top \mU \mD}},\] then the update on line~\ref{ln:rasunenx1} of Algorithm~\ref{alg:RASUNE} is given by 
\begin{equation} \label{eq:singlecol}x_{k+1}\; =\; x_k - \frac{1}{\hat{L}} \frac{  d_i^\top g(x_k)}{d_i^\top \mH(x_k) d_i}\, d_i,  \quad\mbox{with probability }p_i,\end{equation}
and  under the assumptions of Theorem~\ref{theo:rasunen},  
 Algorithm~\ref{alg:RASUNE} converges according to 
\begin{equation} \label{eq:1vectorrateconv}
\E{f(x_k)} - f_* \leq \left(1-\min_{x \in \cQ}\frac{\lambda_{\min}^+(\mH^{1/2}(x) \mD \mD^\top \mH^{1/2}(x))}{\trace{\mD^\top \mU\mD}}  \frac{\hat{\mu }}{\hat{L}} \right)^k (f(x_0) -f_*).\end{equation}
\end{corollary}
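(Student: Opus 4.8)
The plan is to specialize Theorem~\ref{theo:rasunen} to the prescribed single-column distribution, which requires two pieces of work: (i) verifying that the stated sampling scheme produces the update formula~\eqref{eq:singlecol}; and (ii) computing (or lower bounding) the quantity $\rho$ of~\eqref{eq:rhok} for this choice of $\cD$, checking along the way that Assumption~\ref{ass:null} holds so that Lemma~\ref{lem:rholambdamin} and Lemma~\ref{lem:sufficientrhorate} are available.

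First I would derive~\eqref{eq:singlecol}. When $\mS_k = d_i$, the matrix $\mS_k^\top \mH(x_k)\mS_k = d_i^\top \mH(x_k) d_i$ is a nonzero scalar by hypothesis, so its pseudoinverse is just its reciprocal, and line~\ref{ln:rasunenx1} of Algorithm~\ref{alg:RASUNE} collapses to $x_{k+1} = x_k - \frac{1}{\hat L}\frac{d_i^\top g(x_k)}{d_i^\top \mH(x_k) d_i} d_i$, occurring with probability $p_i$. The nullspace-preserving condition~\eqref{eq:NullSHS} is immediate here: since $d_i^\top \mH(x) d_i \neq 0$, the $1\times 1$ matrix $\mS_k^\top \mH(x)\mS_k$ has trivial nullspace, which equals $\Null(\mS_k) = \Null(d_i) = \{0\}$ as $d_i \neq 0$ (it is a column of the invertible $\mD$). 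So Assumption~\ref{ass:null} holds.

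Next, the core computation: evaluate $\mG(x) = \EE{\mS\sim\cD}{\mS(\mS^\top \mH(x)\mS)^\dagger \mS^\top}$. With the discrete distribution this is $\sum_i p_i \frac{d_i d_i^\top}{d_i^\top \mH(x) d_i}$. Plugging in $p_i = \frac{d_i^\top \mU d_i}{\trace{\mD^\top \mU\mD}}$ gives $\mG(x) = \frac{1}{\trace{\mD^\top \mU\mD}}\sum_i \frac{d_i^\top \mU d_i}{d_i^\top \mH(x) d_i} d_i d_i^\top$. The key trick is the bound $\mU \succeq \mH(x)$, which yields $d_i^\top \mU d_i \geq d_i^\top \mH(x) d_i$, hence $\frac{d_i^\top \mU d_i}{d_i^\top \mH(x) d_i} \geq 1$ for every $i$; therefore $\mG(x) \succeq \frac{1}{\trace{\mD^\top \mU\mD}}\sum_i d_i d_i^\top = \frac{1}{\trace{\mD^\top \mU\mD}}\mD\mD^\top$. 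Substituting this into the Rayleigh-quotient definition~\eqref{eq:rhok}, for any $v \in \range{\mH(x)}$,
\[
\frac{\dotprod{\mH^{1/2}(x)\mG(x)\mH^{1/2}(x) v, v}}{\norm{v}_2^2} \;\geq\; \frac{\dotprod{\mH^{1/2}(x)\mD\mD^\top\mH^{1/2}(x) v, v}}{\trace{\mD^\top\mU\mD}\,\norm{v}_2^2}\;,
\]
so that $\rho(x) \geq \frac{\lambda_{\min}^+(\mH^{1/2}(x)\mD\mD^\top\mH^{1/2}(x))}{\trace{\mD^\top\mU\mD}}$, where $\lambda_{\min}^+$ is the smallest eigenvalue over $\range{\mH(x)}$; the minimization over this subspace matches the constraint $v\in\range{\mH(x)}$ because $\mH^{1/2}(x)\mD\mD^\top\mH^{1/2}(x)$ and $\mH(x)$ have the same range (as $\mD\mD^\top \succ 0$). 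Taking $\min_{x\in\cQ}$ and invoking Theorem~\ref{theo:rasunen} gives~\eqref{eq:1vectorrateconv}. To ensure $\rho > 0$ so the theorem applies, note $\E{\mS_k\mS_k^\top} = \sum_i p_i d_i d_i^\top \succ 0$ since $\mD$ is invertible and all $p_i>0$, so~\eqref{eq:11} holds trivially and Lemma~\ref{lem:sufficientrhorate} certifies $\rho>0$.

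The main obstacle is getting the semidefinite inequality $\mG(x)\succeq \frac{1}{\trace{\mD^\top\mU\mD}}\mD\mD^\top$ cleanly and making sure the passage to the constrained Rayleigh quotient over $\range{\mH(x)}$ is legitimate — in particular that $\lambda_{\min}^+$ on the right-hand side is taken over the correct subspace and that dropping from $\mG(x)$ to its lower bound does not lose the range condition needed in~\eqref{eq:rhok}. Everything else is bookkeeping: confirming the $\dagger$-for-scalars reduction, the nullspace-preserving check, and the invertibility of $\E{\mS_k\mS_k^\top}$.
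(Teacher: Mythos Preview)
Your proposal is correct and follows essentially the same route as the paper: compute $\mG(x)$ for the discrete distribution, use $\mH(x)\preceq\mU$ to get $\mG(x)\succeq \frac{1}{\trace{\mD^\top\mU\mD}}\mD\mD^\top$, and then pass to a lower bound on $\rho$. The only cosmetic difference is that the paper routes the last step through Lemma~\ref{lem:rholambdamin} (writing $\rho(x)=\lambda_{\min}^+(\E{\hat\mP(x)})$ and then bounding that), whereas you work directly with the Rayleigh-quotient definition~\eqref{eq:rhok}; both require, and both supply, the range-matching observation that $\Range(\mH^{1/2}(x)\mD\mD^\top\mH^{1/2}(x))=\Range(\mH(x))$ (the paper cites Lemma~\ref{lem:09709s}, you argue it from $\mD\mD^\top\succ 0$).
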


Each iteration of single colum sketching Newton method~\eqref{eq:singlecol} requires only three scalar derivatives of the function $t \mapsto f(x_k + td_k)$ and thus if $f(x)$ can be evaluated in constant time, this amounts to  $O(1)$ cost per iteration. Indeed~\eqref{eq:singlecol} is much like coordinate descent, except we descent along the $d_i$ directions, and with a stepsize that adapts depending on the curvature information $d_i^\top \mH(x_k)d_i.$ \footnote{There in fact exists a block coordinate method that also incorporates second order information~\cite{Fountoulakis2018}.}

The rate of convergence in~\eqref{eq:1vectorrateconv} suggests that we should choose $\mD \approx \mU^{-1/2}$ so that $\rho$ is large. If there is no efficient way to approximate $\mU^{-1/2}$, then the simple choice of $\mD = \mI$ gives $\rho(x_k) = \lambda_{\min}^+(\mH(x_k))/\trace{\mU}.$ 


%
%
An expressive family of functions that satisfy Assumption~\ref{ass:Newton} are {\em generalized linear models}.
\begin{definition} 
Let $0 \leq u \leq \ell.$ Let $\phi_i: \R \mapsto \R_+$ be a twice differentiable function such that
\begin{equation} \label{eq:phipripribnd}
  u \leq \phi_i''(t) \leq \ell, \quad \mbox{for } i =1,\ldots, n.
\end{equation}
Let $a_i \in \R^d$ for $i=1,
\ldots, n$ and $\mA = [a_1,\ldots, a_n] \in \R^{d\times n}.$ We say that $f:\R^d\to \R$ is a generalized linear model when
\begin{equation} \label{eq:fgenlin}
\textstyle  f(x) = \frac{1}{n} \sum \limits_{i=1}^n \phi (a_i^\top x) +\frac{\lambda}{2}\norm{x}_2^2 \;.
\end{equation}
\end{definition}

The structure of the Hessian of a generalized linear model is such that highly efficient fast Johnson-Lindenstrauss sketches~\cite{Ailon:2009} can be used. Indeed, the Hessian is given by
\begin{eqnarray*}
\mH(x) & =& \textstyle \frac{1}{n} \sum \limits_{i=1}^n a_ia_i^\top\phi_i'' (a_i^\top x) + \lambda \mI =  \frac{1}{n} 
\mA \Phi'' (\mA^\top x) \mA^\top + \lambda \mI\;,\label{eq:genlinhess2}
\end{eqnarray*}
and consequently, for computing the sketch Hessian $\mS_k^\top \mH(x_k)\mS_k$ we only  need to sketch the fixed matrix $\mS_k^\top \mA$ and compute $\mS_k^\top \mS_k$ efficiently, and thus no backpropgation is required. This is exactly the setting where fast Johnson--Lindenstrauss transforms can be effective~\cite{JL,Ailon:2009}.

We now give a simple expression for computing the relative smoothness and convexity constant for generalized linear models.
\begin{proposition}\label{prop:genlin} Let $f:\R^d\to \R$ be a generalized linear model with $0 \leq u \leq \ell.$ 
Then Assumption~\ref{ass:Newton} is satisfied with
\begin{equation} \label{eq:fgenLhatmuhat}
\hat{L}= 
\frac{\ell \sigma_{\max}^2(\mA) +n\lambda}{u \sigma_{\max}^2(\mA)+n\lambda }
\qquad \mbox{and} \qquad
\hat{\mu}= 
\frac{u \sigma_{\max}^2(\mA)+n\lambda }{\ell \sigma_{\max}^2(\mA) +n\lambda}.
\end{equation}
Furthermore, if we apply Algorithm~\ref{alg:RASUNE} with a sketch such that $\E{\mS\mS^\top}$ is invertible, then
 the iteration complexity~\eqref{eq:itercomplex} of applying Algorithm~\ref{alg:RASUNE} is given by
\begin{equation} \label{eq:complexgenlin} k \geq \frac{1}{\rho}\left(\frac{\ell \sigma_{\max}^2(\mA) +n\lambda}{u \sigma_{\max}^2(\mA)+n\lambda }\right)^2 \log\left(\frac{1}{\epsilon} \right).\end{equation}
\end{proposition}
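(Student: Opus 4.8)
The plan is to deduce everything from a pair of Löwner (positive-semidefinite) bounds on the Hessian and then translate those bounds into the relative smoothness/convexity inequalities \eqref{eq:Lhat}--\eqref{eq:muhat} through the integral form of Taylor's theorem. Set $\mM \eqdef \tfrac1n\mA\mA^\top$. Since the definition of a generalized linear model forces $u\mI \preceq \Phi''(\mA^\top x) \preceq \ell\mI$ for every $x$, the Hessian formula $\mH(x) = \tfrac1n\mA\Phi''(\mA^\top x)\mA^\top + \lambda\mI$ immediately yields
\[ u\mM + \lambda\mI \;\preceq\; \mH(x) \;\preceq\; \ell\mM + \lambda\mI , \qquad \forall x\in\R^d . \]
Because $\lambda_{\max}(\mM) = \sigma_{\max}^2(\mA)/n$, the next step is to verify the two matrix inequalities $\ell\mM + \lambda\mI \preceq \hat{L}\,(u\mM + \lambda\mI)$ and $\hat{\mu}\,(\ell\mM + \lambda\mI) \preceq u\mM + \lambda\mI$. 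Rearranging the first gives $(\hat{L} u - \ell)\mM + (\hat{L} - 1)\lambda\mI \succeq 0$; here $\hat{L} u - \ell \le 0$ and $\hat{L} \ge 1$, so the left-hand side is $\succeq \big[(\hat{L} u - \ell)\lambda_{\max}(\mM) + (\hat{L} - 1)\lambda\big]\mI$, and a one-line computation shows the bracket equals $\tfrac1n\big[\hat{L}(u\sigma_{\max}^2(\mA) + n\lambda) - (\ell\sigma_{\max}^2(\mA) + n\lambda)\big] = 0$ for the stated value of $\hat{L}$; the inequality for $\hat{\mu}$ is entirely analogous. Chaining these, $\hat{\mu}\,\mH(y) \preceq \mH(x) \preceq \hat{L}\,\mH(y)$ for all $x,y$, in particular for all $x,y\in\cQ$.

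With these spectral bounds in hand, Assumption~\ref{ass:Newton} follows from the standard identity
\[ f(x) - f(y) - \langle g(y), x-y\rangle \;=\; \int_0^1 (1-t)\,\norm{x-y}_{\mH(z_t)}^2\, dt , \qquad z_t \eqdef y + t(x-y), \]
valid since $f$ is twice differentiable with bounded Hessian: I would bound $\norm{x-y}_{\mH(z_t)}^2$ above by $\hat{L}\norm{x-y}_{\mH(y)}^2$ and below by $\hat{\mu}\norm{x-y}_{\mH(y)}^2$, and use $\int_0^1(1-t)\,dt = \tfrac12$ to land on \eqref{eq:Lhat} and \eqref{eq:muhat}. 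Note that the two constants are reciprocal, $\hat{\mu} = 1/\hat{L}$, a fact used in the complexity estimate.

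For the iteration-complexity claim, observe that when $\E{\mS\mS^\top}$ is invertible we have $\Range(\mH(x_k)) \subseteq \R^d = \Range(\E{\mS\mS^\top})$, so \eqref{eq:11} holds; moreover $\mH(x)$ is nonsingular in the regime where $\hat{\mu}>0$, which makes $\Null(\mS^\top\mH(x)\mS) = \Null(\mS)$ and hence Assumption~\ref{ass:null} automatic. Lemma~\ref{lem:sufficientrhorate} then gives $\rho>0$, so Theorem~\ref{theo:rasunen} applies, and substituting $\hat{L}/\hat{\mu} = \hat{L}^2 = \big(\tfrac{\ell\sigma_{\max}^2(\mA)+n\lambda}{u\sigma_{\max}^2(\mA)+n\lambda}\big)^2$ into \eqref{eq:itercomplex} produces \eqref{eq:complexgenlin} (the factor $f(x_0)-f_*$ being absorbed into the logarithm).

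The argument is mostly bookkeeping; the only points requiring a little care are keeping the Löwner manipulations correct when $\lambda = 0$, where $\mH$ is merely positive semidefinite and $\norm{\cdot}_{\mH(y)}$ only a seminorm --- the operator-inequality route avoids this issue since it never divides by a possibly vanishing quadratic form --- and justifying the Taylor remainder identity under mere twice-differentiability with bounded second derivatives. The conceptual core is simply that $\hat{L}$ and $\hat{\mu}$ are, by construction, exactly the constants that make $\ell\mM+\lambda\mI$ and $u\mM+\lambda\mI$ comparable in the Löwner order once one knows $\lambda_{\max}(\mM) = \sigma_{\max}^2(\mA)/n$.
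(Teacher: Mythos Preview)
Your argument is correct and lands on the same constants, but it is organized differently from the paper's proof. The paper invokes Lemma~\ref{lem:hessLhatmuhat} to reduce Assumption~\ref{ass:Newton} to the $c$--stability ratio $\max_{y,z}\|y-z\|^2_{\mH(y)}/\|y-z\|^2_{\mH(z)}$, bounds numerator and denominator by $\|y-z\|^2_{\ell\mM+\lambda\mI}$ and $\|y-z\|^2_{u\mM+\lambda\mI}$, and then performs an explicit fraction decomposition (writing the ratio as $1+\tfrac{\|y-z\|^2_{(\ell-u)\mM}}{\|y-z\|^2_{u\mM+\lambda\mI}}$, inverting, and maximizing) to extract $\sigma_{\max}^2(\mA)$. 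You instead stay entirely in the L\"owner order: having sandwiched $\mH(x)$ between $u\mM+\lambda\mI$ and $\ell\mM+\lambda\mI$, you directly verify the matrix inequality $\ell\mM+\lambda\mI\preceq\hat L(u\mM+\lambda\mI)$ by reducing it to a single scalar identity, then feed $\hat\mu\,\mH(y)\preceq\mH(z_t)\preceq\hat L\,\mH(y)$ into the Taylor integral. This bypasses both the appeal to Lemma~\ref{lem:hessLhatmuhat} and the ratio algebra; it is shorter and never divides by a quadratic form, which, as you note, sidesteps any worry about degeneracy when $\lambda=0$. The trade-off is that you are effectively reproving the content of Lemma~\ref{lem:hessLhatmuhat} inline rather than citing it. For the complexity part your reasoning coincides with the paper's (positive-definiteness of $\mH$ gives Assumption~\ref{ass:null} via Lemma~\ref{lem:09709s}, and invertibility of $\E{\mS\mS^\top}$ gives~\eqref{eq:11}, so Lemma~\ref{lem:sufficientrhorate} yields $\rho>0$); one small caveat is that ``$\hat\mu>0$'' alone does not force $\mH(x)$ to be invertible when $\lambda=0$ and $\mA$ lacks full row rank, but the paper's proof makes the same implicit positive-definiteness assumption.
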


This complexity estimate~\eqref{eq:complexgenlin} should be contrasted with that of gradient descent. 
When $x_0 \in \range{\mA},$ the iteration complexity of  GD (gradient descent) applied to a smooth generalized linear model is given by $\frac{\ell\sigma_{\max}^2(\mA)+n\lambda}{u\sigma_{\min^+}^2(\mA)+n\lambda}\log\left(\frac{1}{\epsilon}\right),$
where  $\sigma_{\min^+}(\mA)$ is the smallest non-zero singular value of $\mA.$ 
To simplify the discussion, and as a santiy check, consider the full Newton method with $\mS_k = \mI$ for all $k$, and consequently $\rho =1.$
In view of~\eqref{eq:complexgenlin} {\em Newton method does  not depend on the smallest singular values nor the condition number of the data matrix.} This suggests that for ill-conditioned problems Newton method can be superior to gradient descent, as is well known.

%

\section{Experiments and Heuristics}\label{sec:exp}

In this section we evaluate and compare the computational performance of RSN (Algorithm~\ref{alg:RASUNE}) on generalized linear models~\eqref{eq:fgenlin}. Specifically, we focus on logistic regression, i.e., $
 \phi_i(t) \, =\, \ln\left(1+ e^{-y_i t} \right),
$ where $y_i \in \{-1,\,1\}$ are the target values for $i=1,\ldots, n$.  Gradient descent (GD), accelerated gradient descent (AGD) ~\cite{NesterovBook} and full Newton methods\footnote{To implement the Newton's method efficiently, of course we exploit the Sherman–Morrison–Woodbury matrix identity~\cite{Woodbury1950} when appropriate} are compared with  RSN.  For simplicity, block coordinate sketches are used; these are random sketch matrices of the form
$
 \mS_k \in \{0,1\}^{d\times s} 
$
with exactly one non-zero entry per row and per column. We will refer to \(s\in \mathbb N\) as the {\em sketch size}.
 To ensure fairness and for comparability purposes, all methods were supplied with the exact Lipschitz constants and equipped with the same line-search strategy (see Algorithm~\ref{alg:line} in the supplementary material). We consider 6 datasets with a diverse number of features and samples (see Table~\ref{tabledata1} for details) which were modified by removing all zero features and adding an intercept, i.e., a constant feature.
  \begin{wraptable}{r}{0.62\linewidth}
\footnotesize
\centering
\caption{Details of the data sets taken from LIBSM~\cite{Chang2011} and OpenML~\cite{OpenML2013}.}
\label{tabledata1}
\begin{tabular}{c|r|c|c}
dataset &non-zero features (d) & samples (n) & density \\ \hline
chemotherapy &61,359 + 1  & 158 +1 & 1 \\ \hline
gisette &5,000 + 1  & 6000 & 0.9910 \\ \hline
news20 & 1,355,191 + 1  & 19996  & 0.0003 \\ \hline
rcv1 &47,237 + 1  &  20,241 & 0.0016 \\ \hline
real-sim & 20,958 + 1  & 72,309 &  0.0025 \\ \hline
webspam & 680,715 + 1  & 350,000 & 0.0055 \\ 
\hline
\end{tabular}
\end{wraptable}

For regularization we used \(\lambda=10^{-10}\) and stopped methods once the gradients norm was below \(tol=10^{-6}\) or some maximal number of iterations had been exhausted.
In Figures~\ref{fig:1} to~\ref{fig:3} we plotted iterations and wall-clock time vs gradient norm, respectively.  
\setlength{\intextsep}{6pt}

\begin{figure}
    \centering
    \begin{minipage}{0.25\textwidth}
  \centering
\includegraphics[width =  \textwidth]{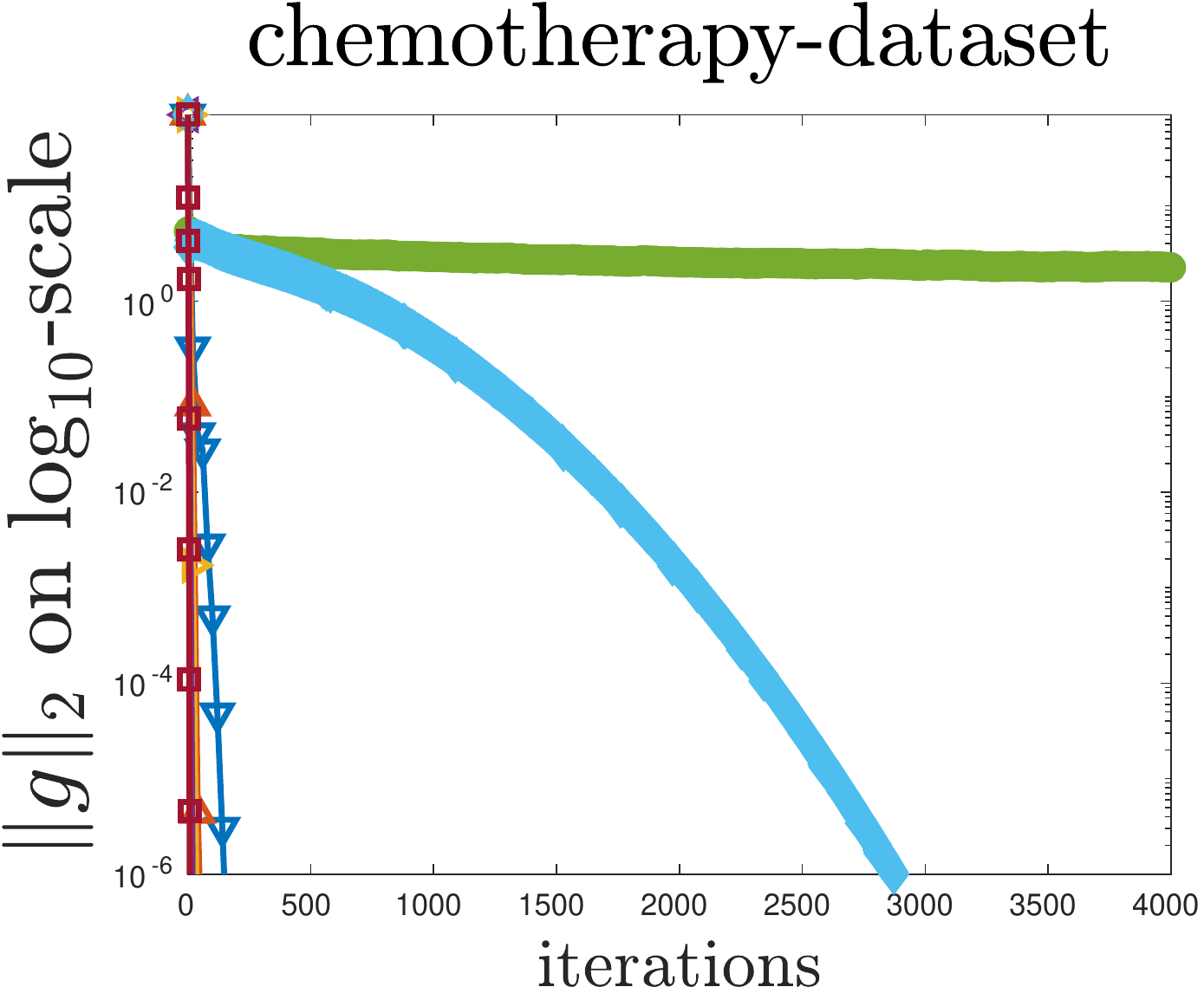}
\end{minipage}%
\begin{minipage}{0.25\textwidth}
  \centering
\includegraphics[width =  \textwidth ]{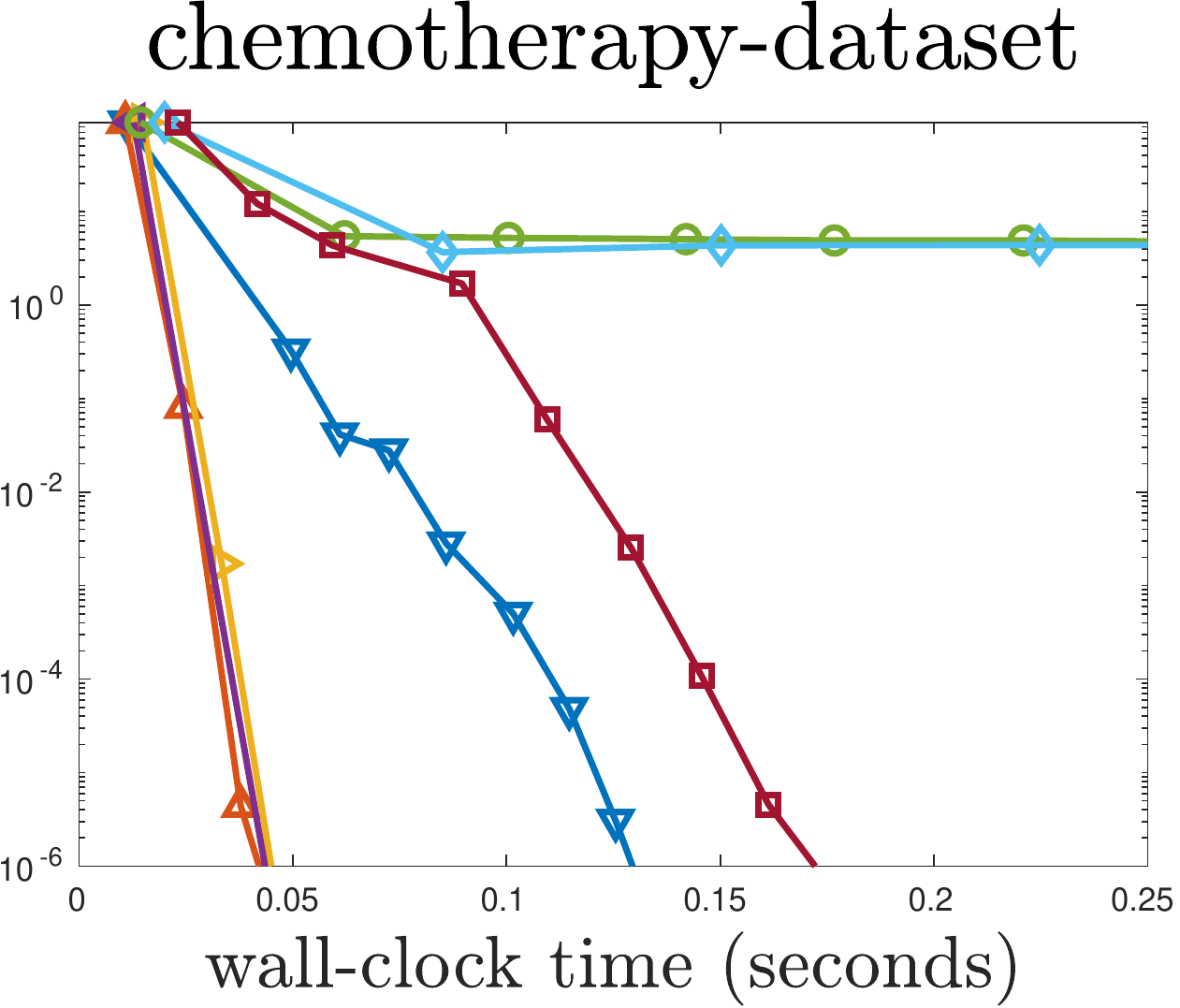}
\end{minipage}%
\begin{minipage}{0.25\textwidth}
  \centering
\includegraphics[width =  \textwidth ]{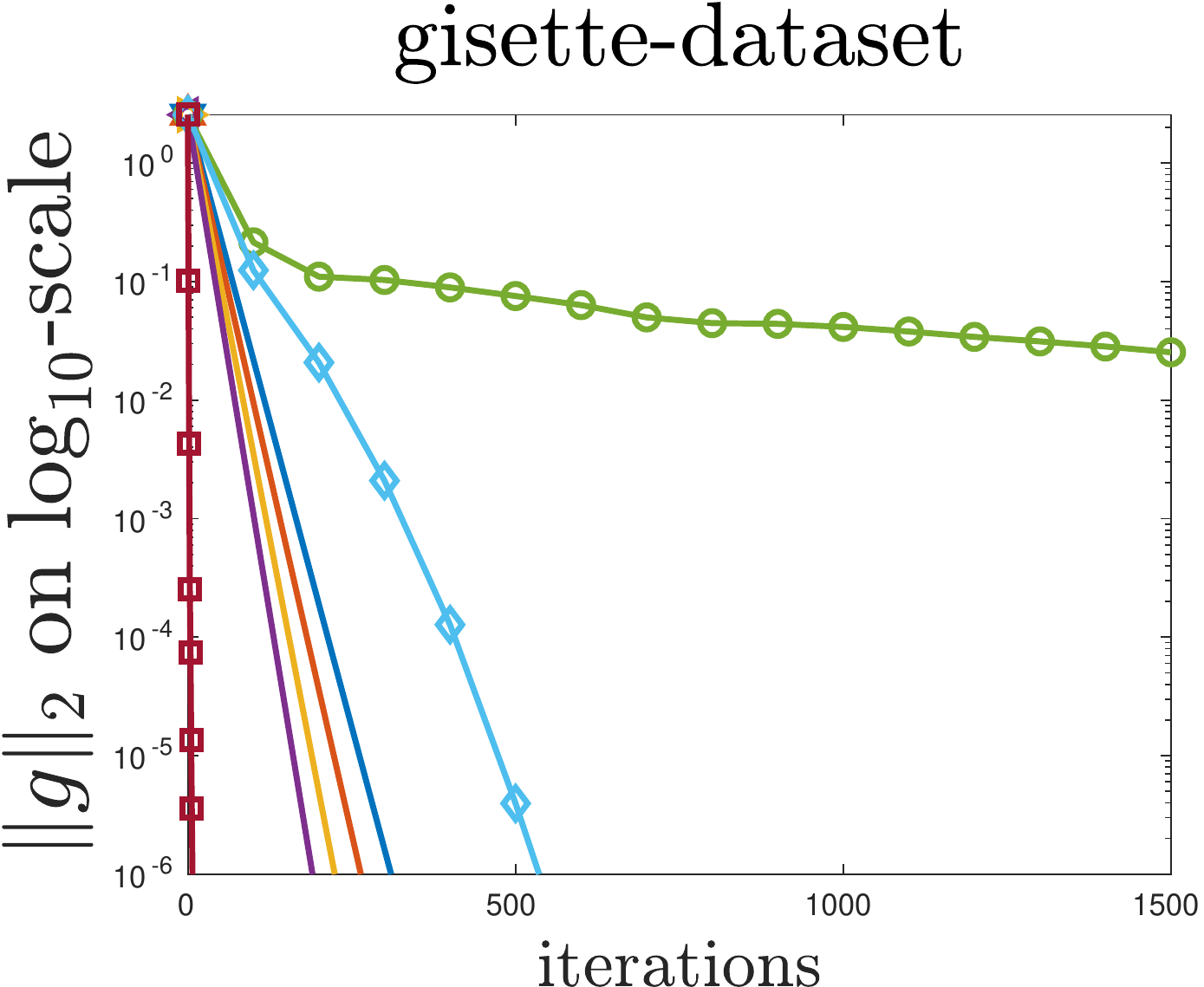}
\end{minipage}%
\begin{minipage}{0.25\textwidth}
  \centering
\includegraphics[width =  \textwidth,  height =  0.83\textwidth ]{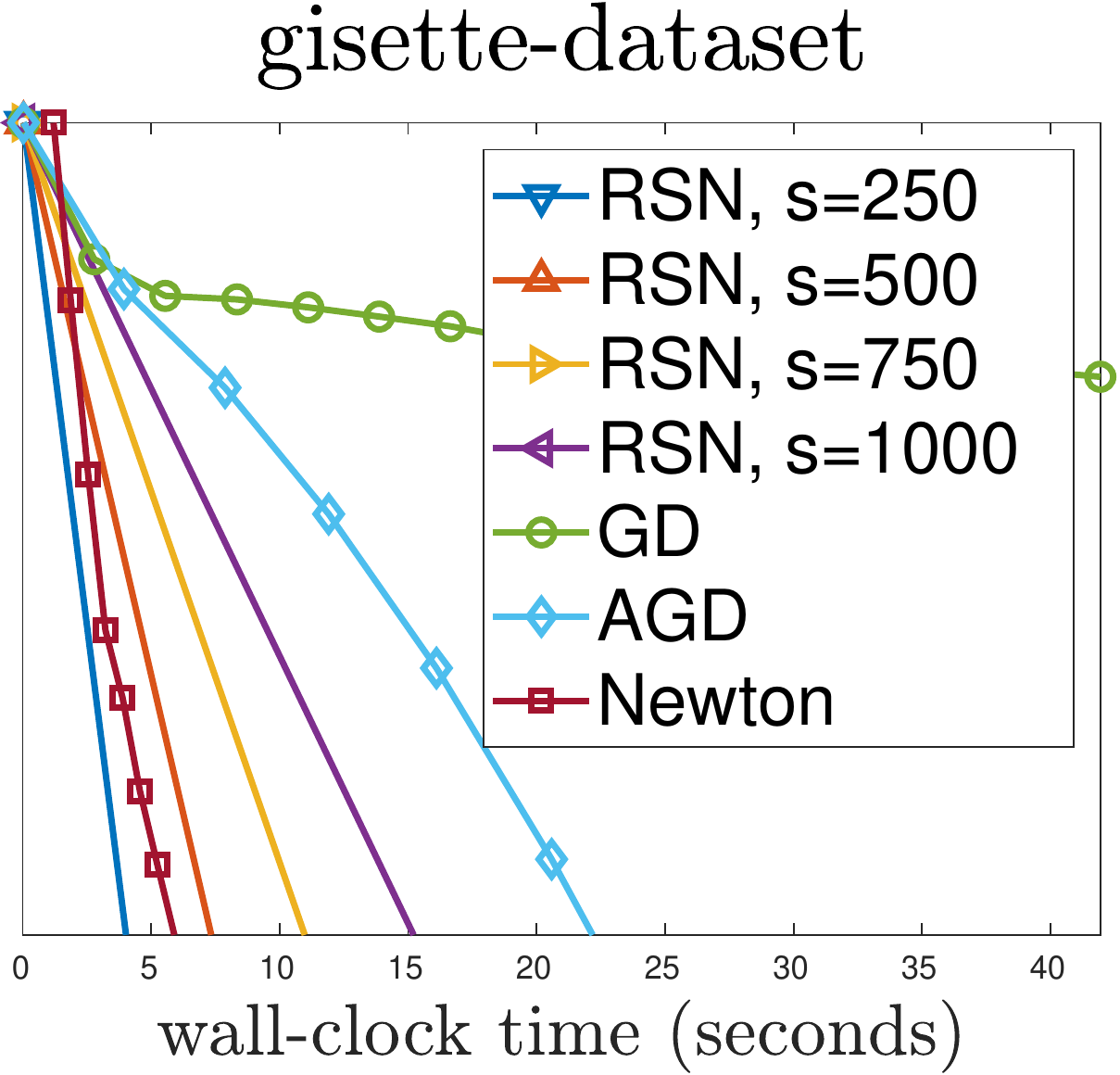}
\end{minipage}%
       \caption{\small Highly dense problems, favoring RSN methods.}
\label{fig:1}
\end{figure}

\begin{figure}
    \centering
    \begin{minipage}{0.25\textwidth}
  \centering
\includegraphics[width =  \textwidth ]{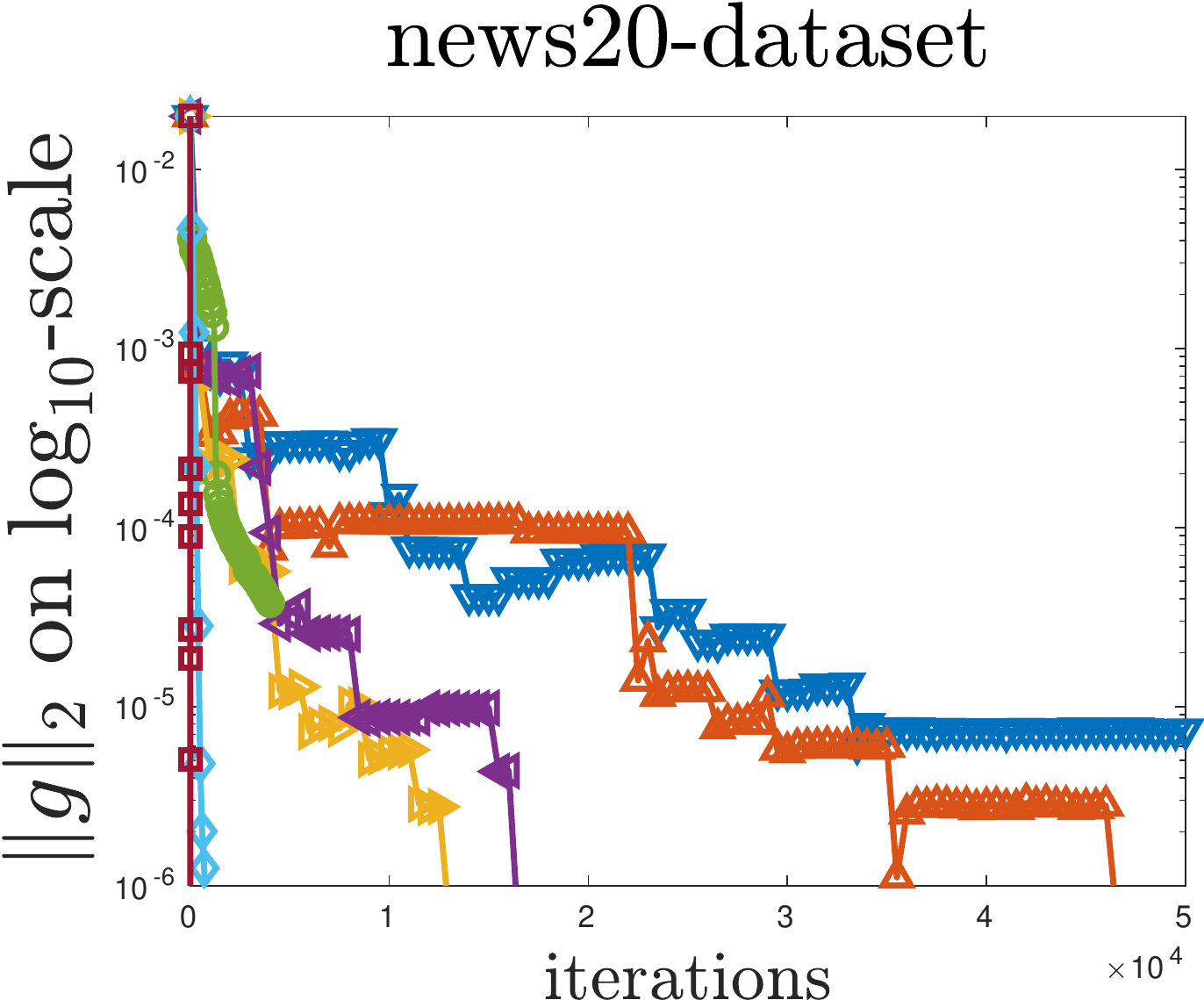}
\end{minipage}%
\begin{minipage}{0.25\textwidth}
  \centering
\includegraphics[width =  \textwidth ]{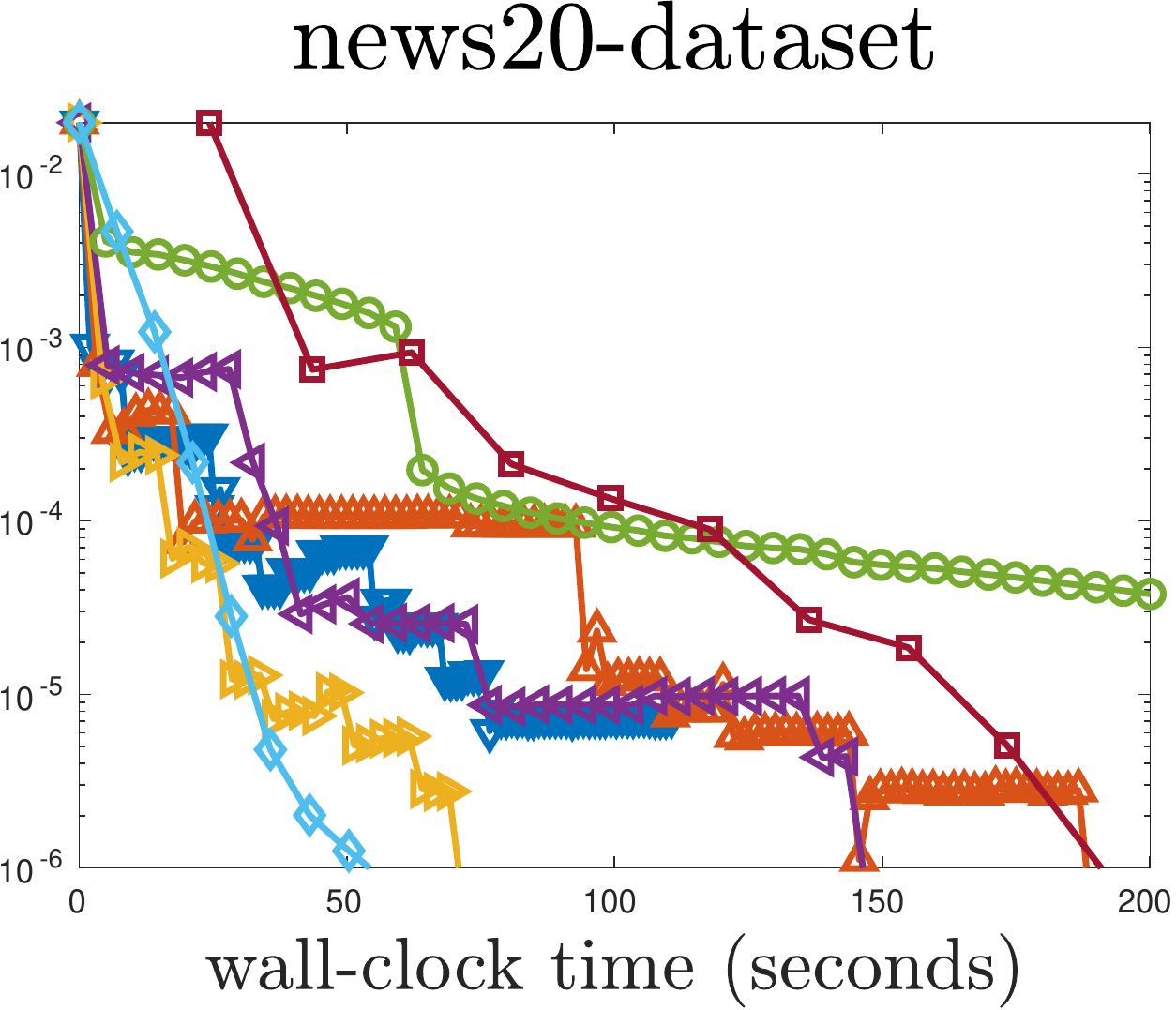}
\end{minipage}%
\begin{minipage}{0.25\textwidth}
  \centering
\includegraphics[width =  \textwidth  ]{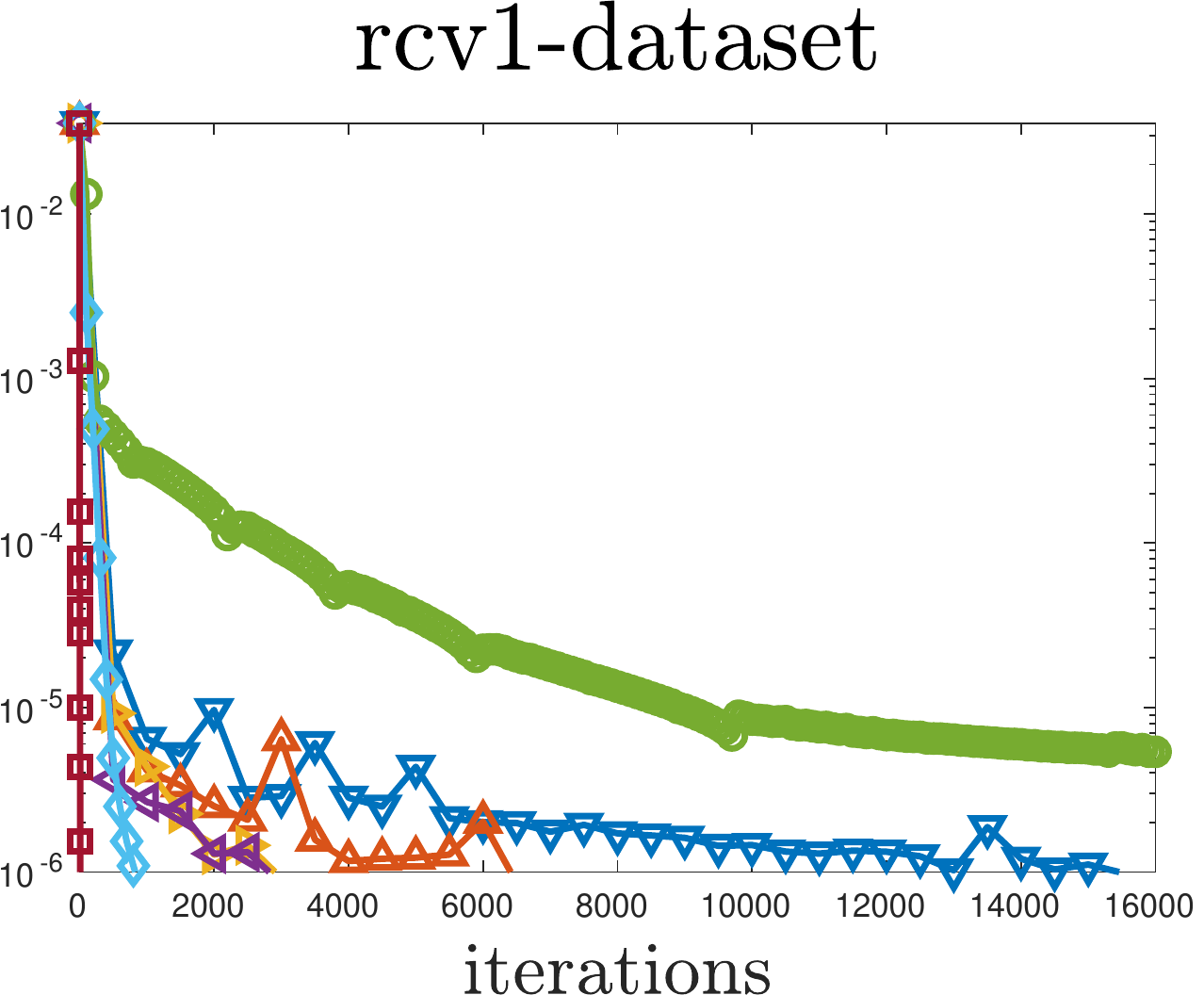}
\end{minipage}%
\begin{minipage}{0.25\textwidth}
  \centering
\includegraphics[width =  \textwidth, height =  0.83\textwidth ]{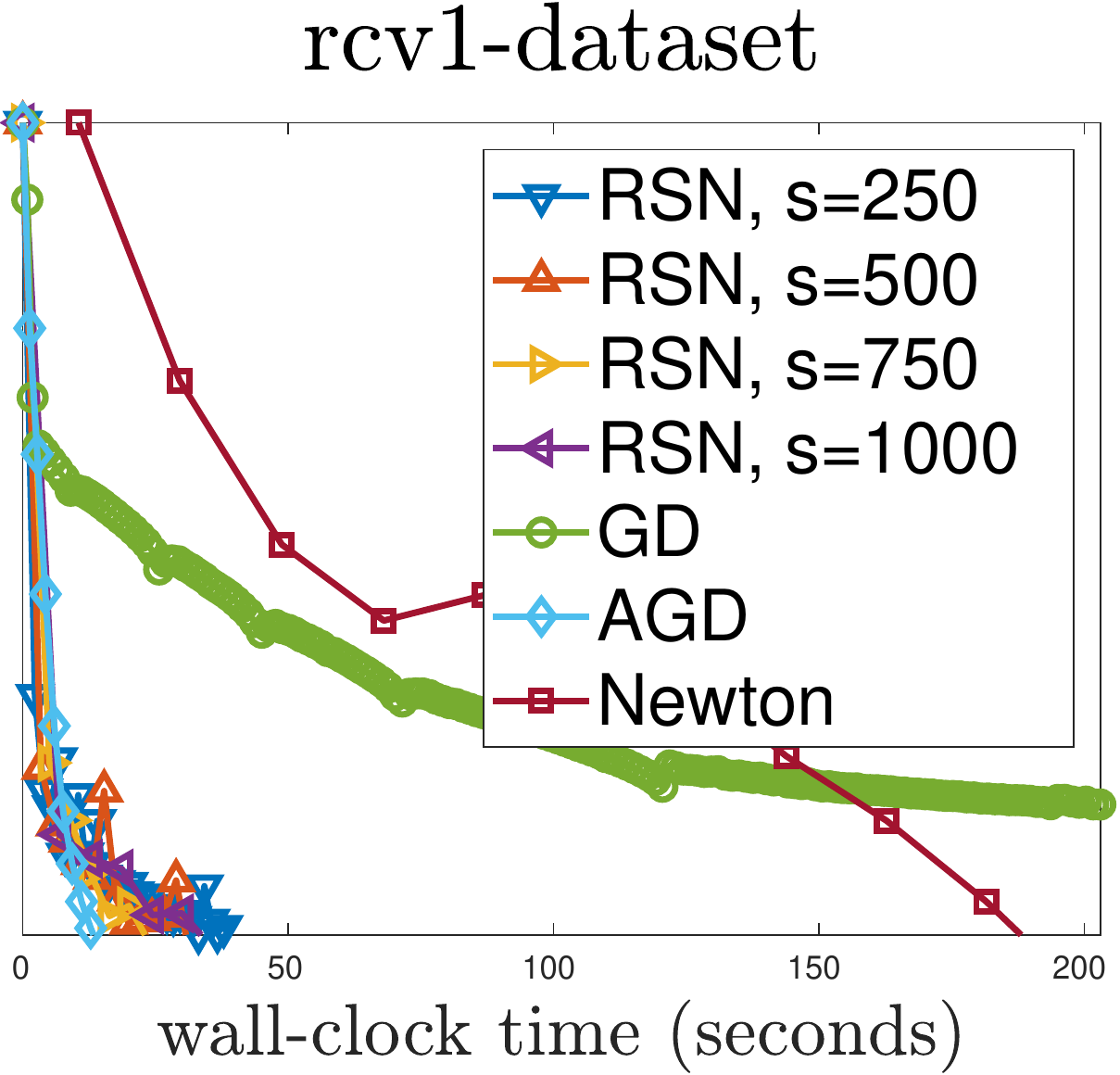}
\end{minipage}%

       \caption{\small Due to extreme sparsity, accelerated  gradient is competitive with the Newton type methods.}
\label{fig:2}
\end{figure}
\begin{figure}
    \centering
    \begin{minipage}{0.27\textwidth}
  \centering
\includegraphics[width =  \textwidth ]{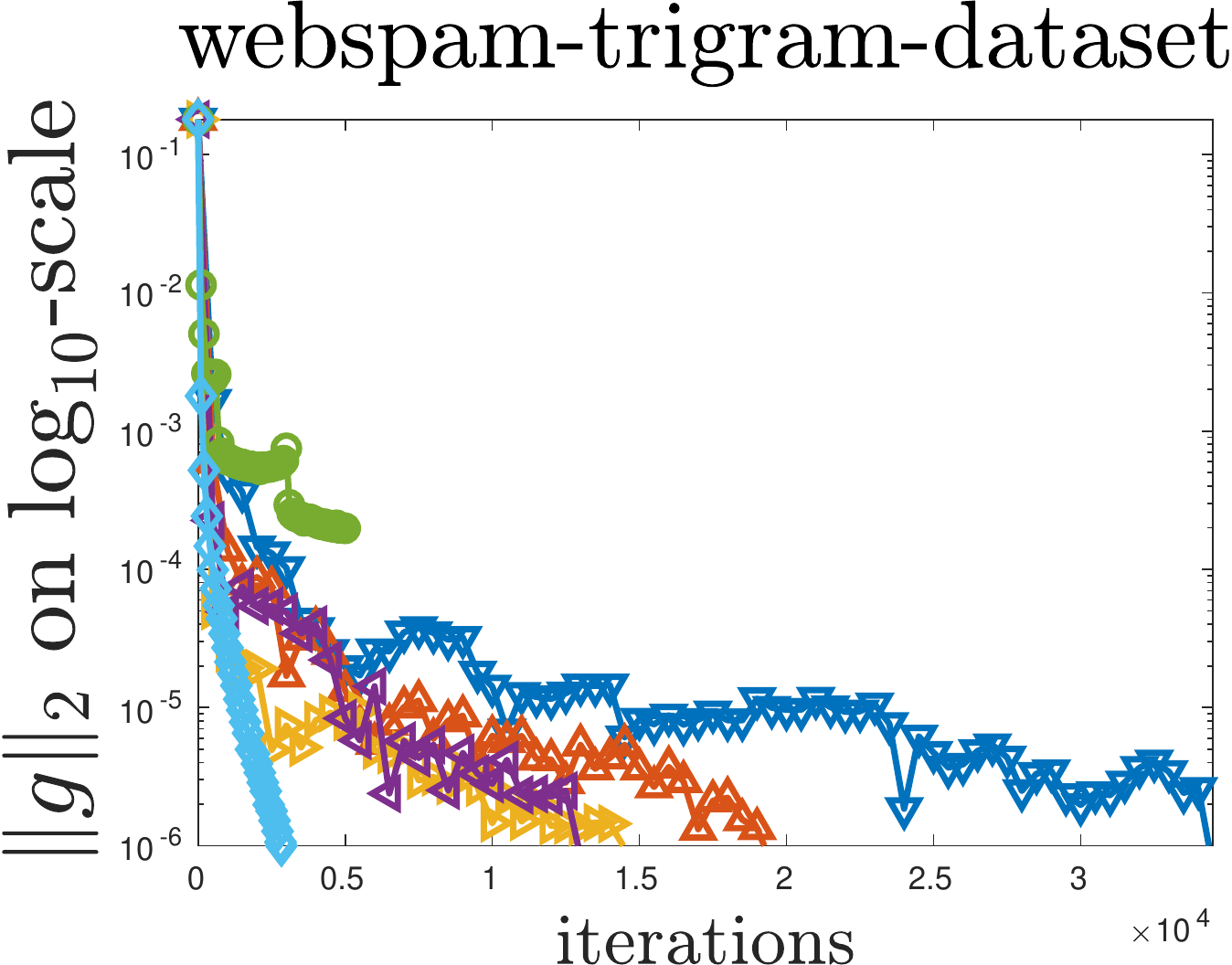}
\end{minipage}%
\begin{minipage}{0.25\textwidth}
  \centering
\includegraphics[width =  \textwidth ]{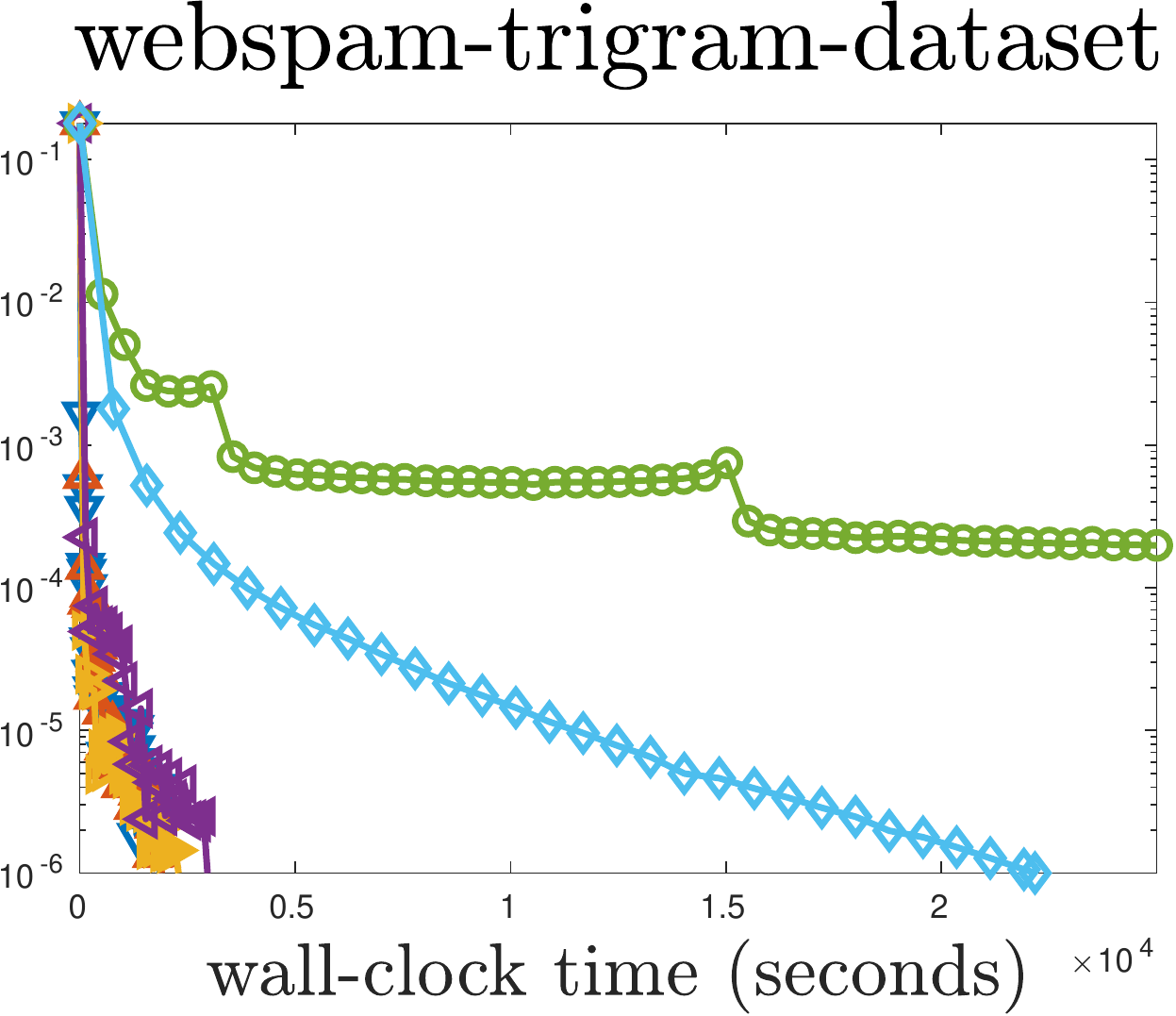}
\end{minipage}%
\begin{minipage}{0.25\textwidth}
  \centering
\includegraphics[width =  \textwidth ]{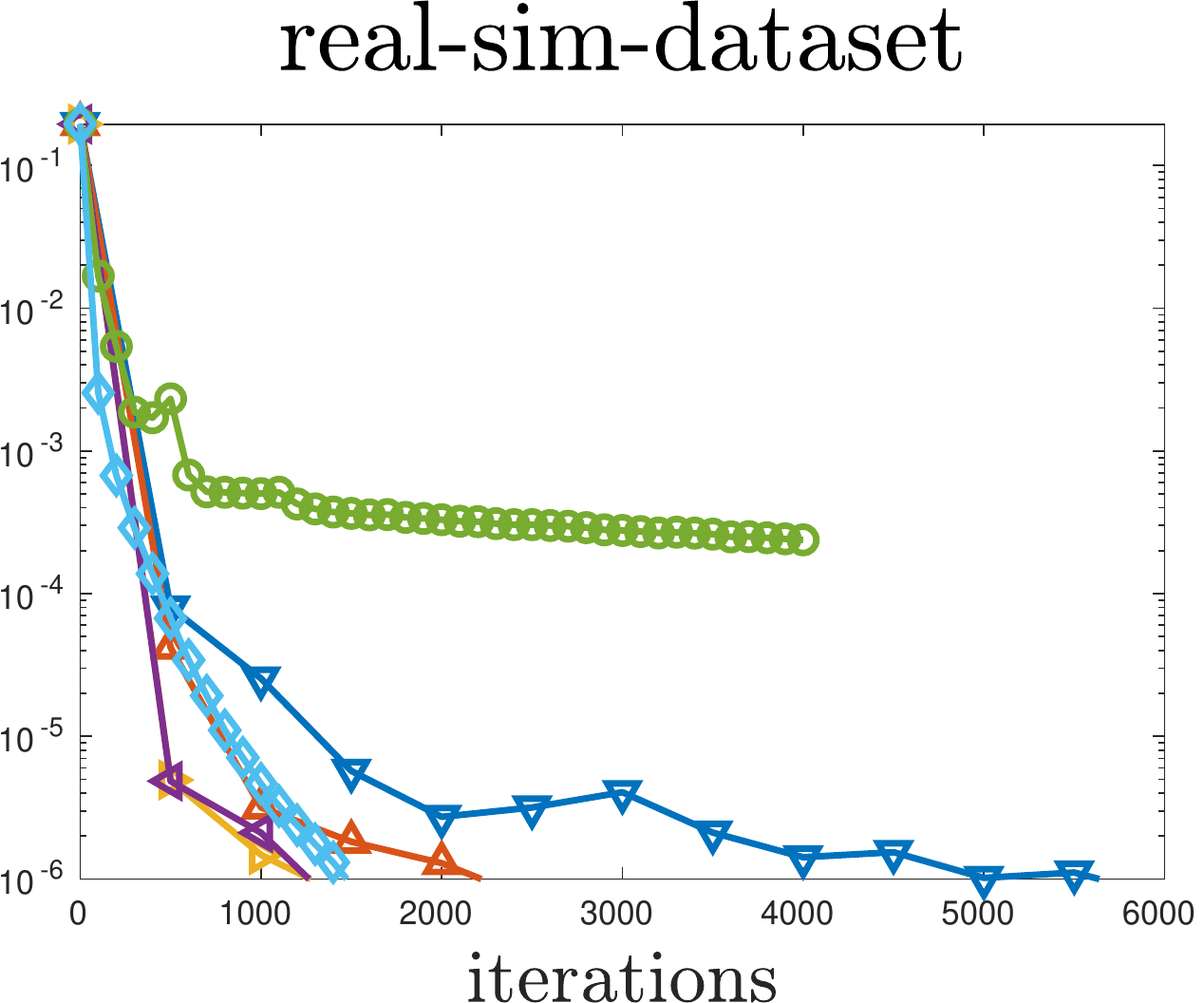}
\end{minipage}%
\begin{minipage}{0.25\textwidth}
  \centering
\includegraphics[width =  \textwidth, height =  0.83\textwidth ]{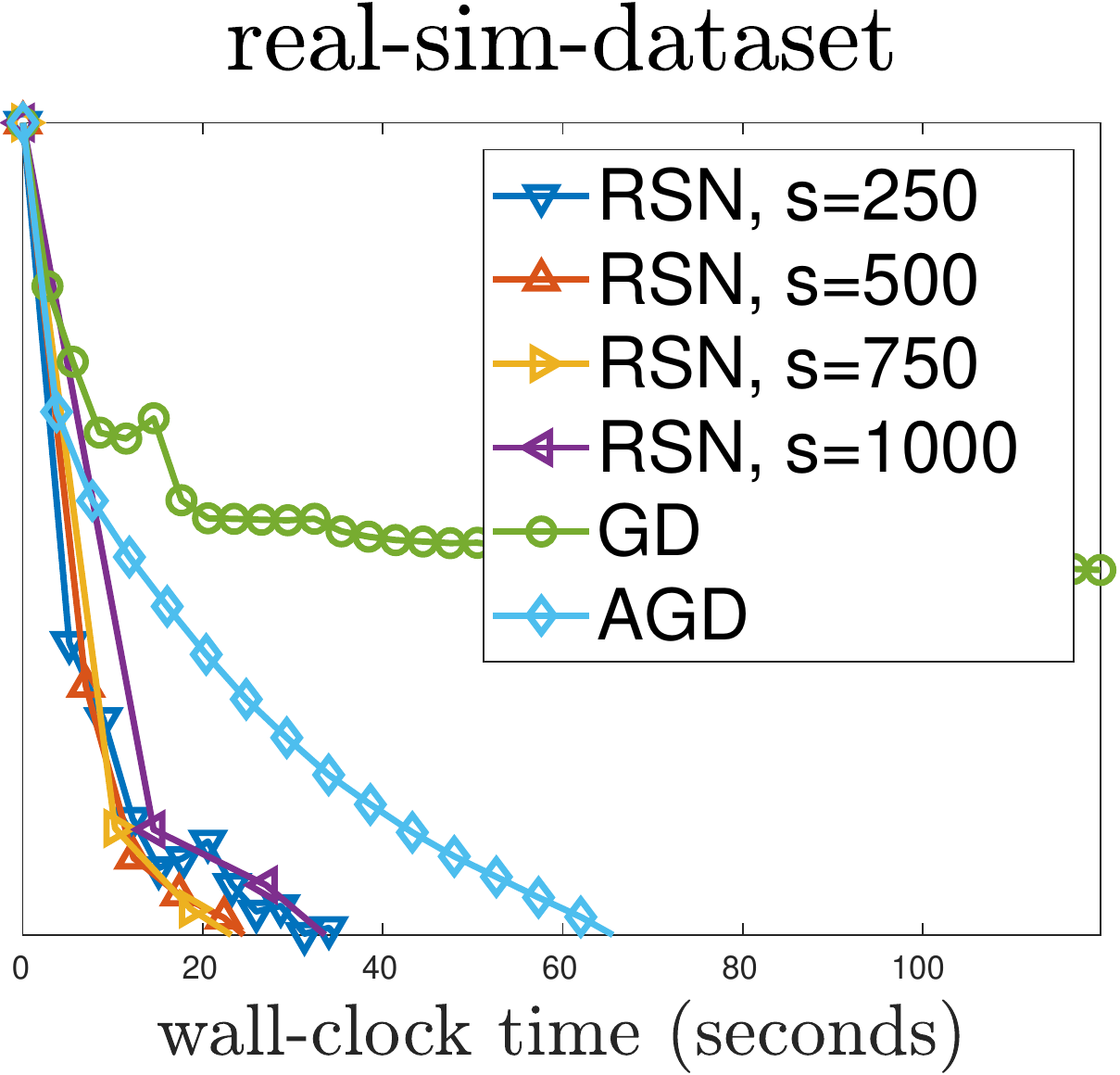}
\end{minipage}%
       \caption{\small Moderately sparse problems  favor the RSN method. The full Newton method is infeasible due to high dimensionality.}
\label{fig:3}
\end{figure}
 Newton's method, when not limited by the immense costs of forming and solving linear systems, is competitive as we can see in the gisette problem in Figure~\ref{fig:1}.
  In most real-world applications however, the bottleneck is exactly within the linear systems which may, even if they can be formed at all, require significant solving time. On the other end of the spectrum, GD and AGD need usually more iterations and therefore may suffer from expensive full gradient evaluations, for example due to higher density of the data matrix, see Figure~\ref{fig:3}. RSN seems like a good compromise here: As the sketch size and type can be controlled by the user, the involved linear systems can be kept reasonably sized. As a result, the RSN is the fastest method in all the above experiments, with the exception of the extremely sparse problem news20 in Figure~\ref{fig:2}, where AGD outruns RSN with $s =750$ by approximately 20 seconds.
%

\section{Conclusions and Future Work}
We have laid out the foundational theory of a class of randomized Newton methods, and also performed numerical experiments validating the methods. There are now several venues of work to explore including 1) combining the randomized Newton method with subsampling so that it can be applied to data that is both high dimensional and abundant 2) leveraging the potential fast Johnson-Lindenstrauss sketches to design even faster variants of RSN 3) develop heuristic sketches based on past descent directions inspired on the  quasi-Newton methods~\cite{GowerGold2016}.

\newpage

\bibliographystyle{plain} 
\bibliography{literature}

\clearpage
\appendix
\part*{Supplementary Material: Randomized Subspace Newton Method}

\section{Key Lemmas}

\begin{lemma} \label{lem:seminormsol} Let $y \in \R^d$, $c>0$ and $\mH \in \R^{d\times d}$ be a symmetric positive semi-definite matrix. Let $g \in \range{\mH}.$ 
The set of solutions to
\begin{equation}\label{eq:quaduppbnd}
\hat{x} \in \arg \min_{x \in \R^d} \dotprod{g,x-y} + \frac{c}{2}\norm{x-y}_\mH^2,
\end{equation}
is given by 
\begin{equation}\label{eq:oaimima3}
\hat{x} \in \mH^\dagger \left(\mH y-\frac{1}{c}g \right) + \Null(\mH).
\end{equation}
Two particular solutions in the above set are given by
\begin{equation}\label{eq:particularsol}
\hat{x} = y - \frac{1}{c}\mH^{\dagger}g,\end{equation}
and the least norm solution
\begin{equation}\label{eq:leastnormoaimima3}
x^{\dagger} =\mH^\dagger \left(\mH y- \frac{1}{c}g \right).
\end{equation}
The minimum of~\eqref{eq:quaduppbnd} is
\begin{equation} \label{eq:minquaduppbnd}
 \dotprod{g,\hat{x}-y} + \frac{c}{2}\norm{\hat{x}-y}_\mH^2 =  -\frac{1}{2c} \norm{g}_{\mH^\dagger}^2.
\end{equation}
\end{lemma}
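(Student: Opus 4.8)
The plan is to eliminate the shift $y$ and recognize \eqref{eq:quaduppbnd} as an unconstrained convex quadratic minimization, then read off every claim from the associated normal equations. First I would substitute $u \eqdef x-y$ and minimize $q(u) \eqdef \dotprod{g,u} + \tfrac{c}{2}\norm{u}_{\mH}^2$. Since $\mH$ is symmetric positive semidefinite and $c>0$, $q$ is convex and differentiable, so $\hat u$ is a global minimizer if and only if $\nabla q(\hat u) = g + c\,\mH\hat u = 0$, i.e.\ $\mH \hat u = -\tfrac1c g$. Because $g\in\range{\mH}$, this linear system is consistent; using $\mH\mH^\dagger g = g$ (valid for $g\in\range{\mH}$, since $\mH\mH^\dagger$ is the orthogonal projector onto $\range{\mH}$ as $\mH$ is symmetric), the vector $-\tfrac1c\mH^\dagger g$ is a particular solution, and the full solution set is $-\tfrac1c\mH^\dagger g + \Null(\mH)$. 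Adding $y$ back yields \eqref{eq:particularsol} as one solution and $y - \tfrac1c\mH^\dagger g + \Null(\mH)$ as the solution set of \eqref{eq:quaduppbnd}.

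Next I would reconcile this with \eqref{eq:oaimima3}. Since $\mH^\dagger\mH$ is the orthogonal projector onto $\range{\mH}$, we have $\mH^\dagger\mH y - y \in \Null(\mH)$, hence $\mH^\dagger(\mH y - \tfrac1c g) + \Null(\mH) = y - \tfrac1c\mH^\dagger g + \Null(\mH)$, which is exactly \eqref{eq:oaimima3}. For the least-norm solution \eqref{eq:leastnormoaimima3}: every solution differs from $x^{\dagger} \eqdef \mH^\dagger(\mH y - \tfrac1c g)$ by an element of $\Null(\mH)$, while $x^{\dagger}\in\range{\mH^\dagger} = \range{\mH} = \Null(\mH)^\perp$; by the Pythagorean identity any nonzero nullspace perturbation strictly increases $\norm{\cdot}_2$, so $x^{\dagger}$ is the unique minimum-norm solution.

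For the optimal value \eqref{eq:minquaduppbnd} I would plug $\hat x = y - \tfrac1c\mH^\dagger g$ into $q$: with $u = -\tfrac1c\mH^\dagger g$,
\[ q(u) = -\tfrac1c\dotprod{g,\mH^\dagger g} + \tfrac{1}{2c}\dotprod{\mH\mH^\dagger g,\mH^\dagger g} = -\tfrac1c\norm{g}_{\mH^\dagger}^2 + \tfrac{1}{2c}\norm{g}_{\mH^\dagger}^2 = -\tfrac{1}{2c}\norm{g}_{\mH^\dagger}^2, \]
using $\mH\mH^\dagger g = g$ once more. That the value is the same at every solution follows since for $w\in\Null(\mH)$ one has $q(u+w)-q(u) = \dotprod{g,w} + c\dotprod{\mH u,w} + \tfrac c2\norm{w}_{\mH}^2 = 0$, because $g\perp\Null(\mH)$, $\mH w = 0$, and $\norm{w}_{\mH}=0$.

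The computation is entirely routine; the only points requiring care are the pseudoinverse facts that $\mH\mH^\dagger$ and $\mH^\dagger\mH$ are the orthogonal projectors onto $\range{\mH}$ (which uses symmetry of $\mH$) and the observation that $g\in\range{\mH}$ is precisely what makes the normal equations $\mH\hat u = -\tfrac1c g$ consistent — without it the objective would be unbounded below along a direction in $\Null(\mH)$ not orthogonal to $g$.
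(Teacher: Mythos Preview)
Your proposal is correct and follows essentially the same approach as the paper: differentiate the quadratic, solve the resulting normal equations $\mH(x-y)=-\tfrac1c g$ using the pseudoinverse, verify the two particular solutions via $(\mI-\mH^\dagger\mH)y\in\Null(\mH)$, and compute the optimal value. Your substitution $u=x-y$ and your choice to evaluate the minimum at the simpler particular solution $u=-\tfrac1c\mH^\dagger g$ (rather than the general form the paper uses) streamline the arithmetic, and you give more explicit justification for the least-norm claim, but the underlying argument is the same.
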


\begin{proof}
Taking the derivative in $x$ and setting to zero gives
\[  \frac{1}{c}g+ \mH(x-y) =0.\]
The above linear system is guaranteed to have a solution because $g \in \Range(\mH).$
The solution set to this linear system is the set
\[\mH^{\dagger}(\mH y-\frac{1}{c}g) + \Null(\mH).\]
The point~\eqref{eq:particularsol} belong to the above set by noting that
$(\mI-\mH^{\dagger}\mH)y \in  \Null(\mH),$ which in turn follows by the $\mH = \mH\mH^{\dagger}\mH$ property of pseudoinverse matrices. Clearly~\eqref{eq:leastnormoaimima3} is the least norm solution. 

%

Finally, using any solution~\eqref{eq:oaimima3}
we have that 
\[ \hat{x} -y \in (\mH^\dagger \mH -\mI) y-\frac{1}{c}\mH^{\dagger}g+\Null(\mH), \]
which when substituted into~\eqref{eq:quaduppbnd} gives 
\begin{eqnarray}
\eqref{eq:quaduppbnd} & =&\underbrace{\dotprod{g,(\mH^\dagger \mH -\mI) y-\frac{1}{c}\mH^{\dagger}g}}_{\alpha} + \frac{c}{2}\underbrace{\norm{(\mH^\dagger \mH -\mI) y-\frac{1}{c}\mH^{\dagger}g}_\mH^2}_{\beta}
. \label{eq:tymosein8s}
\end{eqnarray}
Since $g \in \range{\mH}$ we have that $g^\top( \mH^\dagger  \mH -\mI) =0$ and thus $\alpha =  -\frac{1}{c} \norm{g}_{\mH^\dagger}^2$. 
Furthermore
\begin{eqnarray*}
\beta & =&  \norm{(\mH^\dagger \mH -\mI) y-\frac{1}{c}\mH^{\dagger}g}_\mH^2 \\
& =&  \norm{(\mH^\dagger \mH - \mI) y}_\mH^2 -\frac{2}{c} \dotprod{\mH(\mH^\dagger \mH - \mI) y, \mH^{\dagger}g} + \frac{1}{c^2}\norm{\mH^{\dagger}g}_\mH^2 \\
& = & \frac{1}{c^2}\norm{\mH^{\dagger}g}_\mH^2 \quad = \quad  \frac{1}{c^2} \dotprod{g, \mH^{\dagger}\mH\mH^{\dagger}g}
\quad = \quad\frac{1}{c^2}\norm{g}_{\mH^{\dagger}}^2 ,
\end{eqnarray*}
where we used that $\mH^{\dagger}\mH\mH^{\dagger}= \mH^{\dagger}.$
Using the above calculations in~\eqref{eq:tymosein8s} gives
\begin{eqnarray}
\eqref{eq:quaduppbnd} & =&-\frac{1}{c} \norm{g}_{\mH^\dagger}^2 +\frac{1}{2c}\norm{g}_{\mH^{\dagger}}^2 \nonumber \quad = \quad -\frac{1}{2c} \norm{g}_{\mH^\dagger}^2.
\end{eqnarray}
\end{proof}

\begin{lemma}\label{lem:09709s}For any matrix $\mW$ and symmetric positive semidefinite matrix $\mG$ such that
\begin{equation} \label{eq:Gnullassm}
 \Null(\mG) \subset \Null(\mW^\top ),
 \end{equation} we have that
\begin{equation}\label{eq:8ys98hs}\Null(\mW) = \Null(\mW^\top\mG \mW)\end{equation}
and
\begin{equation}\label{eq:8ys98h986ss}\Range(\mW^\top ) = \Range(\mW^\top\mG \mW).\end{equation}
\end{lemma}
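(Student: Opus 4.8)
The plan is to first prove the nullspace identity~\eqref{eq:8ys98hs} and then deduce the range identity~\eqref{eq:8ys98h986ss} from it by passing to orthogonal complements. For~\eqref{eq:8ys98hs}, the inclusion $\Null(\mW) \subseteq \Null(\mW^\top \mG \mW)$ is immediate, since $\mW v = 0$ forces $\mW^\top \mG \mW v = 0$. The content is the reverse inclusion, and this is where I expect the main work (and the only place the hypothesis~\eqref{eq:Gnullassm} is used). The key device is to write $\mG = \mG^{1/2}\mG^{1/2}$ with $\mG^{1/2}$ the symmetric PSD square root, which satisfies $\Null(\mG^{1/2}) = \Null(\mG)$. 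Then, for $v$ with $\mW^\top \mG \mW v = 0$, multiplying by $v^\top$ gives $0 = v^\top \mW^\top \mG \mW v = \norm{\mG^{1/2}\mW v}_2^2$, so $\mG^{1/2}\mW v = 0$ and hence $\mG \mW v = \mG^{1/2}(\mG^{1/2}\mW v) = 0$, i.e.\ $\mW v \in \Null(\mG)$. Now invoke~\eqref{eq:Gnullassm}: $\Null(\mG) \subseteq \Null(\mW^\top)$, so $\mW^\top \mW v = 0$, and one more application of the same ``multiply by $v^\top$'' trick yields $\norm{\mW v}_2^2 = v^\top \mW^\top \mW v = 0$, hence $\mW v = 0$. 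This closes the loop and proves $\Null(\mW^\top \mG \mW) = \Null(\mW)$.

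For the range identity~\eqref{eq:8ys98h986ss}, I would use the standard fact that for any matrix $\mA$ one has $\Range(\mA^\top) = \Null(\mA)^\perp$. Applying this to $\mA = \mW$ gives $\Range(\mW^\top) = \Null(\mW)^\perp$, and applying it to the symmetric matrix $\mA = \mW^\top \mG \mW$ (so that $\mA^\top = \mA$) gives $\Range(\mW^\top \mG \mW) = \Null(\mW^\top \mG \mW)^\perp$. Since the two nullspaces coincide by~\eqref{eq:8ys98hs}, their orthogonal complements coincide, which is exactly~\eqref{eq:8ys98h986ss}.

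The only subtlety worth double-checking is the existence and properties of $\mG^{1/2}$ (symmetric, PSD, same nullspace as $\mG$), which is standard for symmetric PSD matrices via the spectral decomposition; everything else is a short chain of elementary manipulations. I do not anticipate any genuine obstacle beyond making sure the hypothesis~\eqref{eq:Gnullassm} is invoked at the right moment, namely to pass from $\mW v \in \Null(\mG)$ to $\mW v \in \Null(\mW^\top)$.
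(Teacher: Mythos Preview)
Your proposal is correct and essentially identical to the paper's proof: both use the PSD square root to get $\mG^{1/2}\mW v=0$, then invoke \eqref{eq:Gnullassm} to place $\mW v$ in $\Null(\mW^\top)$, and finally deduce $\mW v=0$; the paper phrases the last step as $\mW v \in \Null(\mW^\top)\cap\Range(\mW)=\{0\}$, which is the same as your ``multiply by $v^\top$'' argument. The range identity is derived by orthogonal complements in both cases.
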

\begin{proof} 
In order to establish \eqref{eq:8ys98hs}, it suffices to show the inclusion $\Null(\mW) \supseteq \Null(\mW^\top\mG \mW)$ since the reverse inclusion trivially holds. Letting $s\in \Null(\mW^\top\mG \mW)$, we see that $\|\mG^{1/2}\mW s\|^2=0$, which implies $\mG^{1/2}\mW s=0$.
Consequently 
\[\mW s \in \Null(\mG^{1/2}) = \Null(\mG) \overset{\eqref{eq:Gnullassm} }{\subset}  \Null(\mW^\top).\] Thus $\mW s \in \Null(\mW^\top) \cap \Range(\mW)$ which are orthogonal complements which shows that $\mW s = 0.$

Finally, \eqref{eq:8ys98h986ss} follows from \eqref{eq:8ys98hs} by taking orthogonal complements. Indeed, $\Range(\mW^\top)$ is the orthogonal complement of $\Null(\mW)$ and $\Range(\mW^\top\mG \mW)$ is the orthogonal complement of $\Null(\mW^\top\mG \mW)$. 
\end{proof}

Our assumptions are inspired on the $c$--stability assumption in~\cite{KSJ-Newton2018}:
\begin{proposition}[\cite{KSJ-Newton2018} c-stable]
\label{prop:cstable}We say that $f$ is $c$--stable if for every $y,z \in \cQ$, $z \neq y$ we have that $\|z-y\|^2_{\mH(y)} > 0,$ and there exists a constant $c\geq 1$ such that
 \begin{equation}\label{eq:cstable}
c \quad =\quad  \max_{y,z \in \cQ} \frac{\|z-y\|^2_{\mH(z)}}{\|z-y\|^2_{\mH(y)}}.
 \end{equation}
We say that $f$ is $L$--smooth if
\begin{equation}\label{eq:Lsmooth} f(x) \leq f(y) + \langle g(y),x-y\rangle  + \frac{L}{2}\|x-y\|^2_2,
\end{equation}
and $\mu$--strongly convex if
\begin{equation}\label{eq:mustrconv} f(x) \geq f(y) + \langle g(y),x-y\rangle  + \frac{\mu}{2}\|x-y\|^2_{2}.
\end{equation}

 If $f$ is $\mu$--strongly convex 
and $L$--smooth, 
then  $f$ is $L/\mu$--stable.
Furthermore if $f$ is $c$--stable then Assumption~\ref{ass:Newton} holds with $\hat{L} \leq  c$ and $\hat{\mu} \geq \frac{1}{c}.$ 
\end{proposition}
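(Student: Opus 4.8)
The plan is to establish the two implications in the proposition separately.

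\textbf{Step 1: $\mu$-strongly convex and $L$-smooth $\Rightarrow$ $L/\mu$-stable.} First I would translate the quadratic functional inequalities \eqref{eq:Lsmooth} and \eqref{eq:mustrconv} into spectral bounds on the Hessian. Fixing $y$, substituting $x = y + tv$ into \eqref{eq:Lsmooth}, expanding $f(y+tv)$ to second order via twice differentiability, cancelling the first-order terms, dividing by $t^2$ and letting $t\to 0$ gives $\dotprod{\mH(y)v,v}\le L\|v\|_2^2$ for all $v$; the same argument applied to \eqref{eq:mustrconv} gives $\dotprod{\mH(y)v,v}\ge \mu\|v\|_2^2$. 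Hence $\mu\mI \preceq \mH(x) \preceq L\mI$ for every $x$. In particular $\|z-y\|^2_{\mH(y)}\ge \mu\|z-y\|_2^2>0$ whenever $z\neq y$, so the ratio in \eqref{eq:cstable} is well defined, and for all $y,z\in\cQ$ with $z\neq y$,
\[ \frac{\|z-y\|^2_{\mH(z)}}{\|z-y\|^2_{\mH(y)}} \;\le\; \frac{L\|z-y\|_2^2}{\mu\|z-y\|_2^2} \;=\; \frac{L}{\mu}. \]
Taking the supremum over $y,z$ shows $c\le L/\mu$. For $c\ge 1$: applying the same inequality with the roles of $y$ and $z$ exchanged shows that the reciprocal of each ratio is also at most $c$, and since the ratio and its reciprocal multiply to $1$, we get $1\le c^2$, i.e. $c\ge 1$. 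Thus $f$ is $L/\mu$-stable.

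\textbf{Step 2: $c$-stable $\Rightarrow$ Assumption~\ref{ass:Newton} with $\hat L \le c$, $\hat\mu \ge 1/c$.} The key tool is Taylor's theorem with integral remainder: for $x,y\in\cQ$ and $z_t \eqdef y + t(x-y)$,
\[ f(x) = f(y) + \dotprod{g(y),x-y} + \int_0^1 (1-t)\,\dotprod{\mH(z_t)(x-y),x-y}\,dt. \]
Since $f$ is convex, the level set $\cQ$ is convex, so $z_t\in\cQ$ for all $t\in[0,1]$. Applying the $c$-stability bound \eqref{eq:cstable} to the pair $(z_t,y)$, together with $z_t-y = t(x-y)$ and cancelling $t^2$, yields (for $x\neq y$; the case $x=y$ being trivial)
\[ \tfrac{1}{c}\,\|x-y\|^2_{\mH(y)} \;\le\; \dotprod{\mH(z_t)(x-y),x-y} \;\le\; c\,\|x-y\|^2_{\mH(y)}. \]
Plugging these two bounds into the integral and using $\int_0^1(1-t)\,dt = \tfrac12$ gives exactly \eqref{eq:Lhat} with $\hat L = c$ and \eqref{eq:muhat} with $\hat\mu = 1/c$. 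Since $c\ge 1$ we have $\hat L = c \ge 1 \ge 1/c = \hat\mu > 0$, so Assumption~\ref{ass:Newton} holds with these constants, and hence also with the optimal constants, which satisfy $\hat L\le c$ and $\hat\mu \ge 1/c$.

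I do not expect a serious obstacle; the argument is essentially careful bookkeeping around the integral form of Taylor's theorem. The two points needing mild care are (i) passing from the quadratic functional inequalities to the Hessian eigenvalue bounds, which uses twice differentiability and a second-order expansion, and (ii) the observation that convexity of $f$ makes $\cQ$ convex, which is precisely what legitimizes invoking the $c$-stability constant at the intermediate points $z_t$; this is the one place the standing convexity hypothesis is genuinely needed.
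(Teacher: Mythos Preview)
Your proposal is correct and follows essentially the same route as the paper. The paper's own proof is little more than a citation to \cite{KSJ-Newton2018} together with an appeal to Lemma~\ref{lem:hessLhatmuhat}; your Step~2 is precisely the content of that lemma's proof (Taylor with integral remainder, convexity of $\cQ$ to keep $z_t\in\cQ$, then bound the integrand by the $c$-stability ratio), and your Step~1 is the standard Hessian-sandwich argument that the paper leaves to the reference.
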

\begin{proof}
Lemma~2 in~\cite{KSJ-Newton2018} proves that $c$--stability implies $c$ relative smoothness and $c$ relative convexity. The inequalities $\hat{L} \leq  c$ and $\hat{\mu} \geq \frac{1}{c}$ follow from~\eqref{eq:cstable} compared to~\eqref{eq:muhatexact} and~\eqref{eq:Lhatexact}.
\end{proof}

\section{Proof of Lemma~\ref{lem:Tksol} }
\begin{proof} Lemma~\ref{lem:descent} implies  that $x_{k+1} \in \cQ$, and Lemma~\ref{lem:seminormsol} in the appendix shows that~\eqref{NM-unconstrained} 
is a global minimizer for $\gamma = \nicefrac{1}{\hat{L}}$. 
 \end{proof}
 
\section{Proof of Lemma~\ref{lem:descentrasunun} }

\begin{proof}
 Due to~\eqref{eq:lambdaleastTSk} we have that
\[f(x_{k+1}) \; \overset{\eqref{eq:Lhat}}{\leq} \; T(x_k,x_{k+1}) \; = \; \min_{\lambda \in \R^s} T(x_k, x_k+ \lambda \mS_k) \; \leq \; T(x_k,x_k) \; =\; f(x_k).\]
\end{proof}

\section{Proof of Lemma~\ref{lem:viewpoints} }

\begin{proof}
\begin{enumerate}
\item
Plugging in $y=x_k$ and $x =~x_k~+~\mS_k \lambda$ into~\eqref{eq:Lhat} we have that
\begin{eqnarray}
 T(x_k + \mS_k \lambda , x_k)  &=& f(x_k) + \langle g(x_k),\mS_k \lambda\rangle  + \frac{\hat{L}}{2}\|\mS_k \lambda\|^2_{\mH(y)} \nonumber \\
 & =& f(x_k) + \langle \mS_k^\top g(x_k), \lambda\rangle  + \frac{\hat{L}}{2}\| \lambda\|^2_{\mS_k^\top \mH(x_k)\mS_k}.\label{eq:hah7h7ha3}
\end{eqnarray}

By taking the orthogonal components in~\eqref{eq:NullSHS} we have that $\mS_k^\top g(x_k) \in \range{\mS_k^\top \mH(x_k)\mS_k}$, and consequently from Lemma~\ref{lem:seminormsol} we have that the minimizer is given by
\begin{equation}\label{eq:lambdasolset}
\lambda_k \in -\frac{1}{\hat{L}}\left(\mS_k^\top \mH(x_k) \mS_k\right)^{\dagger} \mS_k^\top g(x_k) + \Null\left(\mS_k^\top \mH(x_k) \mS_k \right).
\end{equation}
Left multiplying by $\mS_k^\top$ gives
\begin{eqnarray}
\mS_k\lambda_k & =&  -\frac{1}{\hat{L}}\mS_k\left(\mS_k^\top \mH(x_k) \mS_k\right)^{\dagger} \mS_k^\top g(x_k) + \mS_k \Null\left(\mS_k^\top \mH(x_k) \mS_k\right)\nonumber \\
& \overset{\eqref{eq:NullSHS}}{=}& -\frac{1}{\hat{L}}\mS_k\left(\mS_k^\top \mH(x_k) \mS_k \right)^{\dagger} \mS_k^\top g(x_k) \nonumber\\
& \overset{Lemma~\ref{lem:projn}}{=}&  \frac{1}{\hat{L}}\mP_k n(x_k).\label{eq:tempSlambdasolzz}\end{eqnarray}
Consequently $x_k + \mS_k \lambda_k = x_k + \frac{1}{\hat{L}}\mP_k n(x_k).$ 

Furthermore, since $\lambda_k$ is the minimizer of~\eqref{eq:hah7h7ha3}, we have from Lemma~\ref{lem:seminormsol} and~\eqref{eq:minquaduppbnd} that
\begin{eqnarray*}
 T(x_{k+1},x_k) &=& T(x_k +\mS_k \lambda_k) = f(x_k)-  \frac{1}{2\hat{L}}\norm{\mS_k^\top g(x_k)}_{ (\mS_k ^\top \mH(x_k)\mS_k)^{\dagger}}^2 \\
 & =& f(x_k)-  \frac{1}{2\hat{L}}\norm{g(x_k)}_{ \mS_k(\mS_k ^\top \mH(x_k)\mS_k)^{\dagger}\mS_k^\top }^2 .
\end{eqnarray*}
\item Plugging in the constraint into the objective in~\eqref{eq:sketchprojNdual} gives
\begin{eqnarray*}
\norm{\mS_k\lambda+ \frac{1}{\hat{L}}n(x_k)}_{\mH(x_k)}^2 &=& \norm{\lambda}_{\mS_k^\top\mH(x_k)\mS_k}^2 +\frac{2}{\hat{L}}\dotprod{\mS_k^\top \mH(x_k)n(x_k), \lambda} +\frac{1}{\hat{L}^2}\norm{n(x_k)}_{\mH(x_k)}^2 \\
& \overset{\eqref{eq:geqHnewt}}{=}&
\norm{\lambda}_{\mS_k^\top\mH(x_k)\mS_k}^2 +\frac{2}{\hat{L}}\dotprod{\mS_k^\top g(x_k), \lambda} +\frac{1}{\hat{L}^2}\norm{n(x_k)}_{\mH(x_k)}^2.
\end{eqnarray*}
Consequently minimizing the above is equivalent to minimizing~\eqref{eq:hah7h7ha3}, and thus $\mS_k\lambda$ is given  by~\eqref{eq:tempSlambdasolzz}.

\item  The Lagrangian of~\eqref{eq:sketchprojN} is
 \[L(d,\lambda) =\norm{x-x_k}_{\mH(x_k)}^2 + \dotprod{\lambda, \mS_k^\top \mH(x_k)(x-x_k) +\frac{1}{\hat{L}}\mS_k^\top g(x_k)}. \]
 Differentiating in $d$ and setting to zero gives
\begin{equation}\label{eq:adh7hd3a}
\mH(x_k) (x-x_k) +  \mH(x_k)\mS_k\lambda =0. 
\end{equation} 
Left multiplying by $\mS_k^\top$ and using the constraint in~\eqref{eq:sketchprojN} gives
\begin{equation}
  \mS_k^\top \mH(x_k)\mS_k\lambda = \frac{1}{\hat{L}}\mS_k^\top g(x_k). 
\end{equation} 
Again we have that $\mS_k^\top g(x_k) \in \range{\mS_k^\top \mH(x_k)\mS_k}$ by
~\eqref{eq:NullSHS}. Consequently by Lemma~\ref{lem:seminormsol} we have that the solution set in $\lambda$ is given by
 \[ \lambda = \frac{1}{\hat{L}} \left(\mS_k^\top \mH(x_k)\mS_k \right)^\dagger\mS_k^\top g(x_k)+\Null(\mS_k^\top \mH(x_k)\mS_k).\]
 Plugging the above into~\eqref{eq:adh7hd3a} gives
 \begin{eqnarray}
 \mH(x_k) (x-x_k) &=& -  \frac{1}{\hat{L}}\mH(x_k)\mS_k \left(\mS_k^\top \mH(x_k)\mS_k \right)^\dagger\mS_k^\top g(x_k)+ \mH(x_k)\mS_k\Null(\mS_k^\top \mH(x_k)\mS_k)\nonumber \\
 & \overset{\eqref{eq:NullSHS}}{=} &
 -  \frac{1}{\hat{L}}\mH(x_k)\mS_k \left(\mS_k^\top \mH(x_k)\mS_k \right)^\dagger\mS_k^\top g(x_k).
 \end{eqnarray}
 Thus~\eqref{eq:RASUNE} is a solution to the above.
If $\range{\mS_k} \subset \range{\mH_k(x_k)}$ then  $\mH_k^{\dagger}(x_k) \mH_k(x_k) \mS_k = \mS_k$ and the least norm solution is given by~\eqref{eq:RASUNE}. 

\end{enumerate}
\end{proof}

\section{Proof of Theorem~\ref{theo:rasunen} }

\begin{proof}
Consider the iterates $x_k$ given by Algorithm~\ref{alg:RASUNE} and let $\EE{k}{\cdot}$ denote the expectation conditioned on $x_k$, that is $\EE{k}{\cdot} = \E{ \cdot \; | \; x_k}.$
Setting $y=x_k$ in~\eqref{eq:muhat}
and minimizing both sides\footnote{Note that $x^* \in \cQ$ but the global minimizer of~\eqref{eq:minquaduppbnd} is not necessarily in $\cQ$. This is not an issue, since the global minima is a lower bound on the minima constrained to $\cQ$. } using~\eqref{eq:minquaduppbnd} in Lemma~\ref{lem:seminormsol}, we obtain the inequality
\begin{equation}\label{eq:cor} f_* \geq f(x_k) - \frac{1}{2 \hat{\mu}} \left\| g(x_k)\right\|_{\mH^\dagger(x_k)}^2. \end{equation}
From~\eqref{eq:Txkplus1min} and~\eqref{eq:Lhat} we have that 
\begin{eqnarray}
f(x_{k+1}) & \leq & f(x_k)  - \frac{1}{2\hat{L}}\norm{g(x_k) }_{\mS_k (\mS_k ^\top \mH(x_k)\mS_k)^{\dagger}\mS_k}^2.\label{eq:98ja8j8j3a2}
\end{eqnarray}
Taking expectation conditioned on $x_k$ gives
\begin{eqnarray}
\EE{k}{f(x_{k+1})} & \leq & f(x_k)  - \frac{1}{2\hat{L}}\norm{g(x_k) }_{\mG(x_k)}^2.\label{eq:98ja8j8j3a23}
\end{eqnarray}
 Assumption~\ref{ass:range} together with $\range{\mH(x_k)} = \range{\mH^{1/2}(x_k)}$ gives that
\begin{equation} \label{eq:HdaghaldHhalfg}
\mH^{\dagger/2}(x_k)\mH^{1/2}(x_k) g(x_k) = g(x_k),
\end{equation}
where $\mH^{\dagger/2}(x_k) = (\mH^{\dagger}(x_k))^{1/2}.$
Consequently
\begin{equation}\label{eq:eigenvbndG}
\norm{g(x_k) }_{\mG(x_k)}^2 = \norm{g(x_k) }^2_{\mH^{\dagger/2}(x_k)\mH^{1/2}(x_k)\mG(x_k) \mH^{1/2}(x_k)\mH^{\dagger/2}(x_k)}
\geq  \rho(x_k)  \norm{g(x_k) }^2_{\mH^{\dagger}(x_k)},
\end{equation}
where we used the definition~\eqref{eq:rhok} of $\rho(x_k)$ together with $\mH^{\dagger/2}(x_k)g(x_k) \in \range{\mH(x_k)}$ in the inequality.
Using~\eqref{eq:eigenvbndG} and~\eqref{eq:cor}  in~\eqref{eq:98ja8j8j3a23} gives
\begin{eqnarray}
\EE{k}{f(x_{k+1})} & \leq & f(x_k)  - \frac{\rho(x_k)}{2\hat{L}}\norm{g(x_k) }_{\mH^{\dagger}(x_k)}^2 \label{eq:1} \\
& \leq & f(x_k) -  \frac{\rho(x_k) \hat{\mu}}{\hat{L}}(f(x_k) - f_*).\label{eq:aosid9aka}
\end{eqnarray}
Subtracting $f_*$ from both sides  gives 
\begin{equation} \label{eq:RSNonestepconv}
\EE{k}{f(x_{k+1}) - f_*} \leq \left(1-\rho(x_k) \frac{\hat{\mu} }{\hat{L}} \right) (f(x_{k}) -f_*). 
\end{equation}
Finally, since $x_k \in \cQ$ from Lemma~\ref{lem:descentrasunun}, we have
that $\rho \leq \rho(x_k)$ and taking total expectation gives the result~\eqref{eq:RSNkstepconv}.
\end{proof}

\section{Proof of Theorem~\ref{theo:sublinear} }

\begin{proof}
	From \eqref{eq:1} it follows that
	\begin{eqnarray}
		\E{\norm{g(x_k)}_{\mH^\dagger(x_k)}^2}
		&\overset{\eqref{eq:1}}{\leq}&
		\E{\frac{2\hat{L}}{\rho(x_k)} \left(f(x_k) - \EE{k}{f(x_{k+1}) }\right)} \notag \\
		&=&
		\frac{2\hat{L}}{\rho(x_k)} \E{f(x_k) - f(x_{k+1}) }\notag \\
		&\overset{\eqref{eq:rhok}}{\leq}&
		 \frac{2\hat{L}}{\rho} \E{f(x_k) - f(x_{k+1}) }.\label{eq:5}
	\end{eqnarray}
	From \eqref{eq:98ja8j8j3a2} we have that
	\begin{equation}\label{eq:6}
		f(x_{k+1}) \leq f(x_k),
	\end{equation}
	and thus
	\begin{equation}\label{eq:3}
		x_k \in \cQ \quad \text{for all}\ k=0,1,2,\ldots
	\end{equation}
	Using the convexity of $f(x)$, for every $ x_* \in \cX_*\eqdef \arg\min f$ we get
	\begin{eqnarray}
		f_*
		&\geq &
		f(x_k)
		+
		\dotprod{g(x_k), x_* - x_k} \nonumber\\
		&\overset{\eqref{eq:HdaghaldHhalfg}}{=}&
		f(x_k)
		+
		\dotprod{\mH^{1/2}(x_k)\mH^{\dagger/2}(x_k)g(x_k), x_* - x_k} \nonumber \\
		&\geq &
		f(x_k)
		-
		\norm{g(x_k)}_{\mH^\dagger(x_k)}\norm{x_k - x_*}_{\mH(x_k)} \nonumber\\
		&\overset{\eqref{eq:3}}{\geq}&
		f(x_k)
		-
		\norm{g(x_k)}_{\mH^\dagger(x_k)}\sup_{x \in \cQ}\norm{x - x_*}_{\mH(x)}, \nonumber
	\end{eqnarray}
	hence
	\begin{equation*}
		f(x_k) - f_* \leq \norm{g(x_k)}_{\mH^\dagger(x_k)}\sup_{x \in \cQ}\norm{x - x_*}_{\mH(x)}.
	\end{equation*}
	Taking infimum among all $x^* \in \cX_*$ and using \eqref{eq:2} we get
	\begin{equation}
		f(x_k) -  f_*  \leq \cR\norm{g(x_k)}_{\mH^\dagger(x_k)}.\label{eq:4}
	\end{equation}
	Hence by Jensen's inequality
	\begin{eqnarray}
		\left(\E{f(x_k)} - f_*\right)^2
		&\leq&
		\E{\left(f(x_k) - f_*\right)^2} \nonumber \\
		&\overset{\eqref{eq:4}}{\leq}&
		\E{\cR^2 \norm{g(x_k)}_{\mH^\dagger(x_k)}^2} \nonumber\\
		&\overset{\eqref{eq:5}}{\leq}&
		\frac{2\hat{L}\cR^2}{\rho} \E{f(x_k) - f(x_{k+1})}.\label{eq:7}
	\end{eqnarray}
	Now we put everything together:
	\begin{eqnarray}
		\frac{1}{\E{f(x_{k+1}) - f_*}} - \frac{1}{\E{f(x_{k}) - f_*}}
		&=&
		\frac{\E{f(x_k) - f(x_{k+1})}}{\E{f(x_{k+1}) - f_*}\E{f(x_{k}) - f_*}} \nonumber \\
		&\overset{\eqref{eq:6}}{\geq}&
		\frac{\E{f(x_k) - f(x_{k+1})}}{\left(\E{f(x_{k}) - f_*}\right)^2}\nonumber\\
		&\overset{\eqref{eq:7}}{\geq}&
		\frac{\rho}{2\hat{L}\cR^2}.\label{eq:8}
	\end{eqnarray}
	Summing up \eqref{eq:8} for $k=0,\ldots,T-1$ and using telescopic cancellation we get
	\begin{eqnarray}
		\frac{\rho T}{2\hat{L}\cR^2}
		\leq
		\frac{1}{\E{f(x_{T}) - f_*}} - \frac{1}{\E{f(x_{0}) - f_*}}
		\leq
		\frac{1}{\E{f(x_{T}) - f_*}},
	\end{eqnarray}
	which after re-arranging concludes the proof.
\end{proof}

\section{Proof of Lemma~\ref{lem:rholambdamin}}
\begin{proof}
If~\eqref{eq:exactness} holds then by taking orthogonal complements we have that 
\begin{equation}\label{eq:perpernis7} \Range \left(\mH(x_k)\right) = \Null \left(\mH(x_k) \right)^\perp = \Null \left(\Exp{\hat{\mP}(x_k)} \right)^\perp,\end{equation}
and consequently
\begin{eqnarray*}
 \rho(x_k) &\overset{\eqref{eq:rhok}+\eqref{eq:perpernis7}}{=} &
\min_{v \in \Null(\Exp{\hat{\mP}(x_k)})^\perp}\frac{\dotprod{\mH^{1/2}(x_k)\mG(x_k) \mH^{1/2}(x_k)v, v}}{\norm{v}_2^2}  \\
&=&\min_{v \in \Null(\Exp{\hat{\mP}(x_k)})^\perp}\frac{\dotprod{\Exp{\hat{\mP}(x_k)}v, v}}{\norm{v}_2^2} = \lambda_{\min}^+(\Exp{\hat{\mP}(x_k)})>0.
\end{eqnarray*}
\end{proof}

\section{Proof of Lemma~\ref{lem:sufficientrhorate} }
\begin{proof}
Let $\cX_\mS$ be a random subset of $\R^d$, where $\mS \sim \cD$. We define stochastic intersection of $\cX_\mS$:
\begin{equation}
	\bigcap_{\mS \sim \cD} \cX_\mS = \left\{x \in \R^d : x \in \cX_\mS \text{ with probability } 1  \right\}.
\end{equation}
Using this definition for $\Null(\mG_k)$ we have
\begin{eqnarray}
\Null \left(\mG_k \right)& =&
\Null \left(\EE{\mS \sim \cD}{\mS \left(\mS^\top \mH(x_k)\mS \right)^\dagger\mS^\top}\right) \nonumber\\
&=&\bigcap_{\mS \sim \cD} \Null \left(\mS \left(\mS^\top \mH(x_k)\mS \right)^\dagger \mS^\top\right), \label{eq:10}
\end{eqnarray}
where the last equality follows from the fact that $\mS \left(\mS^\top \mH(x_k)\mS\right)^\dagger\mS^\top$ is a symmetric positive semidefinite matrix.
From the properties of pseudoinverse it follows that
\begin{equation}
\Null \left( \left(\mS^\top \mH(x_k)\mS \right)^\dagger \right) = \Null\left(\mS^\top \mH(x_k)\mS \right) = \Null \left(\mS \right),\nonumber
\end{equation}
thus, we can apply Lemma~\ref{lem:09709s} and
obtain
\begin{equation}\label{eq:9}
	\Null \left(\mS \left(\mS^\top \mH(x_k)\mS \right)^\dagger\mS^\top\right) = \Null \left(\mS^\top \right).
\end{equation}
Furthermore,
\begin{eqnarray}
\Null \left(\mG_k\right)
&\overset{\eqref{eq:10}}{=}&\bigcap_{\mS \sim \cD} \Null \left(\mS \left(\mS^\top \mH(x_k)\mS \right)^\dagger\mS^\top\right) \nonumber\\
&\overset{\eqref{eq:9}}{=}&\bigcap_{\mS \sim \cD} \Null\left(\mS^\top \right) \nonumber\\
&=&\bigcap_{\mS \sim \cD} \Null \left(\mS\mS^\top \right) \nonumber\\
&=&\Null \left(\EE{\mS \sim \cD}{ \mS\mS^\top}\right). \label{eq:12}
\end{eqnarray}
From \eqref{eq:11} and \eqref{eq:12} it follows that
\begin{equation}
 	\Null \left(\mG_k \right) \subset \Null \left(\mH(x_k) \right) = \Null \left(\mH^{1/2}(x_k)\right),
\end{equation}
hence, Lemma~\ref{lem:09709s} implies that
\begin{equation}
	\Range \left(\mH(x_k) \right) = \Range \left(\mH^{1/2}(x_k)\mG_k \mH^{1/2}(x_k)\right),
\end{equation}
which concludes the proof.

\end{proof}

\section{Proof of Lemma~\ref{lem:hessLhatmuhat} }

%
%
%

\begin{proof}
Using Taylor's theorem, for every $x,y \in \cQ$  we have that
\begin{eqnarray}
f(x) = f(y) + \langle g(y),x-y\rangle  + \int_{t=0}^1 (1-t)\|x-y\|^2_{\mH(y +t(x-y))} dt.
\end{eqnarray}
Comparing the above with~\eqref{eq:Lhat} we have that
\begin{equation}
 \frac{\hat{L}}{2}\|x-y\|^2_{\mH(y)} \geq \int_{t=0}^1 (1-t)\|x-y\|^2_{\mH(y +t(x-y))} dt, \quad \forall x,y \in \cQ, \; x \neq y.
\end{equation}
Let $x\neq y$. Since we assume that $\|x-y\|^2_{\mH(y)}\neq 0$ we have that the relative smoothness constant satisfies
\begin{equation}
\frac{\hat{L}}{2} = \max_{x,y\in \cQ}\int_{t=0}^1\frac{ (1-t)\|x-y\|^2_{\mH(y +t(x-y))}}{\|x-y\|^2_{\mH(y)}}dt.
\end{equation}
Let $z_t = y +t(x-y))$. Substituting $x-y = \left.(z_t-y)\right/t$ in the above gives the equality in~\eqref{eq:Lhatexact}.
Following an analogous argument for the relative convexity constant $\hat{\mu}$ gives the equality in~\eqref{eq:Lhatexact}.

Since $f(x)$ is convex, the set $\cQ$ is convex and thus $z_t\in \cQ$ for all $t\in [0,\,1]$. By alternating the order of the maximization and integral in~\eqref{eq:Lhatexact} that
\begin{eqnarray*}
\frac{\hat{L}}{2} & \overset{\eqref{eq:Lhatexact}}{\leq} &
\int_{t=0}^1 (1-t) \max_{x,y \in \cQ} \frac{ \|z_t-y\|^2_{\mH(z_t)}}{\|z_t-y\|^2_{\mH(y)}}dt\\
&\overset{z_t \in \cQ}{\leq}  & \int_{t=0}^1 (1-t)dt \max_{x,y \in \cQ} \frac{ \|x-y\|^2_{\mH(x)}}{\|x-y\|^2_{\mH(y)}} 
 \; = \;\frac{1}{2}\max_{x,y \in \cQ} \frac{ \|x-y\|^2_{\mH(x)}}{\|x-y\|^2_{\mH(y)}}.
\end{eqnarray*}
Following an analogous argument for the relative convexity constant $\hat{\mu}$ we have that
\begin{eqnarray*}
\frac{\hat{\mu}}{2} & \overset{\eqref{eq:muhatexact}}{\geq} &
\int_{t=0}^1 (1-t)\min_{x,y \in \cQ} \frac{ \|z_t-y\|^2_{\mH(z_t)}}{\|z_t-y\|^2_{\mH(y)}} dt\\
&\overset{z_t \in \cQ}{\geq}  & \int_{t=0}^1 (1-t) dt \min_{x,y \in \cQ} \frac{ \|x-y\|^2_{\mH(x)}}{\|x-y\|^2_{\mH(y)}}
 \; = \;\frac{1}{2}\frac{1}{\max_{x,y \in \cQ} \frac{ \|x-y\|^2_{\mH(x)}}{\|x-y\|^2_{\mH(y)}}}.
\end{eqnarray*}

\end{proof}

\section{Proof of Corollary~\ref{cor:singlevecsketch}}
\begin{proof}
 Using that 
 \begin{equation} \label{temp:j8sj48js4}
0 < d_i^\top \mH(x) d_i \leq d_i \mU d_i,
 \end{equation}
  which follows from $\mH \preceq \mU$ and our assumption that $d_i^\top \mH(x) d_i \neq 0$, we have that
\begin{eqnarray}
 \mG(x) &=& \EE{k}{\mS(\mS^\top \mH(x) \mS)^\dagger \mS^\top} \; =\; \sum_{i=1}^d  \frac{d_i \mU d_i}{\trace{\mD^\top \mU\mD}}\frac{d_id_i^\top}{d_i^\top \mH(x) d_i } \nonumber \\
 &\overset{\eqref{temp:j8sj48js4}}{\succeq} & \frac{1}{\trace{\mD^\top \mU\mD}}\sum_{i=1}^d  d_id_i^\top = \frac{1}{\trace{\mD^\top \mU\mD}} \mD \mD^\top.
\end{eqnarray}
Furthermore since $\mD$ is invertible we have by Lemma~\ref{lem:09709s} that  
\begin{equation} \label{eq:s4i8jo4oj8}
\mbox{Range}\big(\mH^{1/2}(x) \mD \mD^\top \mH^{1/2}\big) = \range{\mH^{1/2}(x)} = \range{\mH(x)}.
\end{equation}

And thus from Lemma~\ref{lem:rholambdamin} we have that
\begin{eqnarray} 
\rho &=& \min_{x \in \cQ} \lambda_{\min}^+ (\hat{\mP}(x)) \; \overset{\eqref{eq:Phatprojdef}}{\geq }\;  \min_{x \in \cQ} \frac{\lambda_{\min}^+(\mH^{1/2}(x) \mD \mD^\top \mH^{1/2}(x))}{\trace{\mD^\top \mU\mD}} .
\end{eqnarray}
\end{proof}

\section{Proof of Proposition~\ref{prop:genlin} }

\begin{proof}
The gradient and Hessian of~\eqref{eq:fgenlin} are given by
\begin{eqnarray}
g(x) & =&  \frac{1}{n} \sum_{i=1}^n a_i\phi_i' (a_i^\top x) + \lambda x = \frac{1}{n} \mA \Phi' (\mA^\top x)+\lambda x , \\
\mH(x) & =& \frac{1}{n} \sum_{i=1}^n a_ia_i^\top\phi_i'' (a_i^\top x) + \lambda \mI =  \frac{1}{n} 
\mA \Phi'' (\mA^\top x) \mA^\top + \lambda \mI,\label{eq:genlinhess}
\end{eqnarray}
where
\begin{eqnarray}
\Phi' (\mA^\top x)   & \eqdef & [\phi_i' (a_1^\top x),\ldots, \phi_i' (a_n^\top x)] \in \R^n,\\
\Phi'' (\mA^\top x) & \eqdef & \diag\left(\phi_i'' (a_1^\top x),\ldots, \phi_i'' (a_n^\top x)\right).
\end{eqnarray}
Consequently the $g(x) \in \range{\mH(x)}$ for all $x\in \R^d.$


%
 Using Lemma~\ref{lem:hessLhatmuhat}  and~\eqref{eq:genlinhess} we have that

\begin{eqnarray}
\hat{L} & \leq & \max_{y,z\in \R^d}\frac{\norm{y-z}^2_{ \frac{1}{n} 
\mA \Phi'' (\mA^\top y) \mA^\top+\lambda \mI }}{\norm{y-z}^2_{ \frac{1}{n} 
\mA \Phi'' (\mA^\top z) \mA^\top+\lambda \mI }} \nonumber \\
 & \overset{\eqref{eq:phipripribnd}}{\leq}&  \max_{y,z\in \R^d}\frac{\norm{y-z}^2_{ \frac{\ell}{n} 
\mA  \mA^\top+\lambda \mI }}{\norm{y-z}^2_{ \frac{u}{n} 
\mA  \mA^\top+\lambda \mI }} \nonumber \\
& = &   \max_{y,z\in \R^d}\frac{\norm{y-z}^2_{ \frac{\ell-u}{n} 
\mA  \mA^\top}+\norm{y-z}^2_{\frac{u}{n} 
\mA  \mA^\top+\lambda \mI }}{\norm{y-z}^2_{ \frac{u}{n} 
\mA  \mA^\top+\lambda \mI }} \nonumber \\
& =&1+  \max_{y,z\in \R^d}\frac{\norm{y-z}^2_{ \frac{\ell-u}{n} 
\mA  \mA^\top}}{\norm{y-z}^2_{ \frac{u}{n} 
\mA  \mA^\top+\lambda \mI }} 
 \label{eq:temp89jsj45}
\end{eqnarray}
Now note that
\begin{eqnarray}
\max_{y,z\in \R^d}\frac{\norm{y-z}^2_{ \frac{\ell-u}{n} 
\mA  \mA^\top}}{\norm{y-z}^2_{ \frac{u}{n} 
\mA  \mA^\top+\lambda \mI }}  & =& 
\frac{1}{\displaystyle \min_{y,z\in \R^d}\frac{\norm{y-z}^2_{ \frac{u}{n} 
\mA  \mA^\top+\lambda \mI }} {\norm{y-z}^2_{ \frac{\ell-u}{n} 
\mA  \mA^\top}}} \nonumber\\
& =& \frac{1}{\displaystyle \frac{u}{\ell-u}+\lambda\min_{y,z\in \R^d}\frac{ \norm{y-z}^2_2} {\norm{y-z}^2_{ \frac{\ell-u}{n} 
\mA  \mA^\top}}} \nonumber \\
& = & \frac{1}{\displaystyle \frac{u}{\ell-u}+\frac{n\lambda}{\ell -u}\frac{1}{\sigma_{\max}^2(\mA)}}, \label{eq:s48omins4}
\end{eqnarray}
where we used that

\begin{equation}
\min_{y,z\in \R^d}\frac{ \norm{y-z}^2_2} {\norm{y-z}^2_{
\mA  \mA^\top}} =\frac{ 1} { \max_{y,z\in \R^d}\frac{\norm{y-z}^2_{
\mA  \mA^\top}}{\norm{y-z}^2_2}} =
\frac{1}{\sigma_{\max}^2(\mA)}.
\end{equation}
Inserting~\eqref{eq:s48omins4} into~\eqref{eq:temp89jsj45} gives

\[\hat{L} \leq 1 + \frac{\ell-u}{u+\frac{n\lambda}{\sigma_{\max}^2(\mA)}} = \frac{\ell \sigma_{\max}^2(\mA) +n\lambda}{u \sigma_{\max}^2(\mA)+n\lambda }.\]
The bounds for $\hat{\mu}$ follows from~\eqref{eq:muhatexact}. 

Finally turing to Lemma~\ref{lem:sufficientrhorate} we have that~\eqref{eq:NullSHS} holds since $\mH(x_k)$ is positive definite and by Lemma~\ref{lem:09709s}, and~\eqref{eq:11} holds by our assumption that $\E{\mS\mS^\top}$ is invertible. Thus by Lemma~\ref{lem:sufficientrhorate} we have that $\rho >0$ and the total complexity result in Theorem~\ref{theo:rasunen} holds.
\end{proof}

\section{Uniform single coordinate sketch}

Further to our results on using single column sketches with non-uniform sampling in Corollary~\ref{cor:singlevecsketch}, here we present the case for uniform sampling that does not  rely on the Hessian having a uniform upper bound as is assumed in Corollary~\ref{cor:singlevecsketch}.
Let $\mH_{ii}(x) \eqdef  e_i^\top \mH(x) e_i $ and $g_i(x) \eqdef e_i^\top g(x)$. In this case~\eqref{eq:RASUNE} is given by
\begin{equation}\label{eq:CN} x_{k+1} = x_k - \frac{g_i(x_k)}{\hat{L} \mH_{ii}(x_k)}   e_i.\end{equation}

\begin{corollary}  \label{cor:uniformsamp}
Let $\Prob{\mS_k = e_i} = \frac{1}{d}$ and let
\begin{equation*}
\alpha \quad = \quad  \min_{x \in \R^d}\min_{w \in \range{\mH(x)}}\frac{\norm{  w}_{\Diag{\mH(x)}^{-1}}^2}{\norm{w}_{\mH^\dagger(x)}^2}. \end{equation*}
Under the assumptions of Theorem~\ref{theo:rasunen} we have that Algorithm~\ref{alg:RASUNE} converges according to 
\[ \E{f(x_k) - f_*} \leq \left(1-\frac{\alpha}{d} \frac{\hat{\mu }}{\hat{L}} \right)^k (f(x_0) -f_*).\]
\end{corollary}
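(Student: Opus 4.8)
The plan is to deduce the corollary directly from Theorem~\ref{theo:rasunen} by computing $\mG(x)$ explicitly for the uniform coordinate sketch and then reading off $\rho$. This parallels Corollary~\ref{cor:singlevecsketch} with $\mD=\mI$, with the key difference that here we can evaluate $\mG(x)$ \emph{exactly} rather than merely lower-bound it, which is what lets us dispense with the uniform upper bound $\mU\succeq\mH(x)$. First I would observe that for $\mS_k=e_i$ the sketched Hessian $\mS_k^\top\mH(x)\mS_k=\mH_{ii}(x)\in\R$ is a scalar, whose pseudoinverse is $\mH_{ii}(x)^{-1}$ (and is $0$ when $\mH_{ii}(x)=0$, in which case $e_i\in\Null(\mH(x))$ since $\mH(x)\succeq 0$). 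Averaging over $i$ with weights $1/d$ gives
\[
\mG(x)\;=\;\EE{\mS\sim\cD}{\mS\left(\mS^\top\mH(x)\mS\right)^{\dagger}\mS^\top}\;=\;\frac{1}{d}\sum_{i=1}^d\frac{e_ie_i^\top}{\mH_{ii}(x)}\;=\;\frac{1}{d}\Diag{\mH(x)}^{-1}.
\]

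Next I would evaluate $\rho(x)$ from its definition~\eqref{eq:rhok} by the change of variables $w=\mH^{1/2}(x)v$. Because $\range{\mH^{1/2}(x)}=\range{\mH(x)}$ and $\mH^{1/2}(x)$ is a bijection on this subspace, $w$ ranges over $\range{\mH(x)}$ exactly as $v$ does. By self-adjointness of $\mH^{1/2}(x)$ the numerator becomes $\dotprod{\mH^{1/2}(x)\mG(x)\mH^{1/2}(x)v,v}=\dotprod{\mG(x)w,w}=\tfrac1d\norm{w}_{\Diag{\mH(x)}^{-1}}^2$, while for the denominator $v=\mH^{\dagger/2}(x)w$ on $\range{\mH(x)}$, so $\norm{v}_2^2=\norm{\mH^{\dagger/2}(x)w}_2^2=\dotprod{\mH^{\dagger}(x)w,w}=\norm{w}_{\mH^\dagger(x)}^2$. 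Hence $\rho(x)=\frac1d\min_{w\in\range{\mH(x)}}\norm{w}_{\Diag{\mH(x)}^{-1}}^2/\norm{w}_{\mH^\dagger(x)}^2$.

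Minimizing over $x\in\cQ$ and using $\cQ\subseteq\R^d$ together with the definition of $\alpha$ gives $\rho=\min_{x\in\cQ}\rho(x)\geq\alpha/d$. Since $0\leq\rho\leq1$ by Lemma~\ref{lem:rholambdamin} and $\hat{\mu}\leq\hat{L}$, we have $0\leq 1-\rho\,\hat{\mu}/\hat{L}\leq 1-\tfrac{\alpha}{d}\,\hat{\mu}/\hat{L}$; raising to the $k$-th power and substituting into~\eqref{eq:RSNkstepconv} yields the claimed rate.

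The argument is essentially mechanical; the only points that need care are the pseudoinverse bookkeeping when $\mH(x)$ is merely positive semidefinite (ensuring that coordinates with $\mH_{ii}(x)=0$ contribute nothing on either side of the Rayleigh quotient, which holds because such $e_i$ lie in $\Null(\mH(x))$ and therefore $w_i=0$ for every $w\in\range{\mH(x)}$) and the verification of the identity $\norm{v}_2^2=\norm{w}_{\mH^\dagger(x)}^2$ restricted to $\range{\mH(x)}$.
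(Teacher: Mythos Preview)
Your proposal is correct and follows essentially the same route as the paper: compute $\mG(x)=\tfrac{1}{d}\Diag{\mH(x)}^{-1}$ explicitly, then perform the change of variables $w=\mH^{1/2}(x)v$ (equivalently $v=\mH^{\dagger/2}(x)w$) in the Rayleigh quotient defining $\rho(x)$ to identify $\rho\geq\alpha/d$, and plug into Theorem~\ref{theo:rasunen}. You are in fact slightly more careful than the paper in two places: you note that $\rho$ is a minimum over $\cQ$ while $\alpha$ is over all of $\R^d$, giving $\rho\geq\alpha/d$ rather than equality, and you explicitly handle the degenerate coordinates with $\mH_{ii}(x)=0$.
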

\begin{proof}
 It follows by direct computation that
\[\mG(x) = \EE{k}{\mS(\mS^\top \mH(x) \mS)^\dagger \mS^\top} = \frac{1}{d}\sum_{i=1}^d \frac{e_ie_i^\top}{\mH_{ii}(x)} = \frac{1}{d}\Diag{\mH(x)}^{-1}.\]
Thus from the definition~\eqref{eq:rhok} we have
\begin{equation*}
\rho = \frac{1}{d} \min_{x \in \R^d}\min_{v \in \range{\mH(x)}}\frac{\dotprod{ \mH^{1/2}(x)\Diag{\mH(x)}^{-1} \mH^{1/2}(x)v, v}}{\norm{v}_2^2}. \end{equation*}
Since $\range{\mH^{\dagger/2}(x)} = \range{\mH(x)}$ and $v \in \range{\mH(x)}$ we can re-write $v = \mH^{\dagger/2}(x) w$ where $w \in \range{\mH(x)}$ and consequently
\begin{eqnarray*}
\rho & = &
\frac{1}{d} \min_{x \in \R^d}\min_{w \in \range{\mH(x)}}\frac{\dotprod{ \Diag{\mH(x)}^{-1} \mH^{1/2}(x)\mH^{\dagger/2}(x) w, \mH^{1/2}(x)\mH^{\dagger/2}(x) w}}{\norm{ w}_{\mH^{\dagger}(x)}^2}
\\
& \overset{\mH^{1/2}(x)\mH^{\dagger/2}(x) w =w}{=} &\frac{1}{d} \min_{x \in \R^d}\min_{w \in \range{\mH(x)}}\frac{\dotprod{ \Diag{\mH(x)}^{-1} w, w}}{\dotprod{\mH(x)w, w}_2^2} \; \eqdef \;\frac{\alpha}{d}. \end{eqnarray*}
\end{proof}

\section{Experimental details}

 All tests were performed in MATLAB 2018b on a PC with an Intel quad-core i7-4770 CPU and 32 Gigabyte of DDR3 RAM running Ubuntu 18.04.
\subsection{Sketched Line-Search}

In order to speed up convergence we can modify Algorithm~\ref{alg:RASUNE} by introducing an exact Line-Search and obtain Algorithm~\ref{alg:RASUNEexact}.

\begin{algorithm}[!t]
\begin{algorithmic}[1] 
\State \textbf{input:}  $x_0\in \R^{d}$
\State \textbf{parameters:} ${\cal D}$ = distribution over random matrices
\For {$k = 0, 1, 2, \dots$}
	\State $\mS_k \sim \cD$
	\State $\lambda_k=-\left(\mS_k^\top \mH(x_k) \mS_k)\right)^{\dagger} \mS_k^\top g(x_k)$
	\State $d_k= \mS_k \lambda_k$
	\State $t_k= \argmin_{t\in \mathbb R} f(x_k+t d_k) $
	\State $x_{k+1} = x_k + t_k d_k $ \label{ln:rasunenx}
\EndFor
\State \textbf{output:} last iterate $x_k$
\end{algorithmic}
\caption{RSNxls: Randomized Subspace Newton with exact Line-Search}
\label{alg:RASUNEexact}
\end{algorithm}

\begin{algorithm}[!t]
\begin{algorithmic}[1]
\State \textbf{input:} increasing continuous function \(l: \mathbb R\rightarrow \mathbb R\) with \(l(0)<0 \) and at least one root \(t^* \in \mathbb R_+\) 
\State \textbf{tolerance:}   \(\epsilon >0 \)
\State set \( [a,b]\leftarrow[0,1] \)
\State \textbf{while} \(l(b)<-\epsilon\)
\State \indent choose \(t>b\) \Comment either fixed enlargement (\(t=2b\)) or via spline extrapolation 
\State \indent set \( [a,b]\leftarrow[b,t] \)
\State \textbf{endwhile} \Comment end of first phase: either \(|l(b)|\leq \epsilon\) or \(l(a) < 0 <\epsilon\leq l(b)\), i.e. \(t^* \in [a,b]\) 
\State set \(t\leftarrow b\)
\State \textbf{while} \(|l(t)|>\epsilon\)
\State \indent \textbf{if} \(l(t) <0 \)
\State \indent \indent \([a,b]\leftarrow [t,b] \)
\State \indent \textbf{else} \(l(t) >0 \)
\State \indent \indent \([a,b]\leftarrow [a,t] \) 
\State \indent \textbf{endif} 
\State choose \(t \) with \( a<t < b\)\Comment either middle of interval (\(t=\frac{a+b}{2}\))  or via spline interpolation
\State \textbf{endwhile} \Comment end of second phase

\State \textbf{output:} \(t>0\) with \( |l(t) | \leq \epsilon\) 
\end{algorithmic}
\caption{Generic Line Search - Pseudocode}
\label{alg:line}
\end{algorithm}

\noindent In this section we focus on heuristics for performing an exact Line-Search under the assumption that our direction is of the form \(d=\mS\lambda \). This allows us to only work with sketched gradients and sketched Hessians. This potentially allows for significant computational savings. Specifically consider the problem of finding
\begin{equation}
 t^*:=\argmin_{t\in \mathbb R} f(x+td),
\end{equation}
which is, for differentiable and convex \(f\), equivalent to finding a root of the objectives first derivative. 
Defining \begin{equation}l(t):=\frac{\partial{f(x+td)}}{\partial t}= d^\top  g(x+td)= \lambda^\top  (\mS^\top g(x+td)) \end{equation}
gives us the task of solving \begin{equation}
                  l(t^*)=0
                 \end{equation}
and differentiating once more 
\begin{equation} l'(t)=\frac{\partial^2{f(x+td)}}{\partial^2 t}= d^\top  \mH(x+td)d= \lambda^\top  (\mS^\top  \mH(x+td) \mS) \lambda, \end{equation}
reveals that we do not need full, but only sketched gradient and Hessian access, in order to evaluate \(l\) respectively \(l'\). Note that the evaluation of
\begin{equation}
 \begin{aligned}
  l(0)=\lambda^\top \mS^\top g(x) \\
  l'(0)=\lambda^\top  (\mS^\top  \mH(x) \mS) \lambda
 \end{aligned}
\end{equation}
are essentially a by-product from the computation of \(\lambda \) in Algorithm~\ref{alg:RASUNEexact} and therefore add almost no computational cost. Furthermore, if \(f \) is convex and \(\lambda =-(\mS^\top \mH(x) \mS)^{\dagger} \mS^\top g(x) \) is given , then \begin{equation} l(0)= - g(x)^\top  \mS (\mS^\top \mH(x) \mS)^{\dagger} \mS^\top g(x) \leq 0 \end{equation}
implies that \(d\) is a weak descent direction of \(f\).
Since in this case, \(l(0)=0\) implies \(t^*=0 \), let us focus on the situation that we actually have a strong descent direction, i.e. that
\begin{equation}                                                                                                                                l(0)<0                                                                                                                     \end{equation}
is satisfied. The line-search \ref{alg:line} ensures an output \(t> 0 \) satisfying \(|l(t)| \leq \epsilon\) and is best explained by strengthening Step 4 of \eqref{alg:line} to ``\textbf{while} \(l(b)<0\)'', as this would ensure that the final values of \(a\) and \(b\) box the minimum  \(t^* \in [a,b]\): The first phase is to identify an interval \([a,b]\) with \(0\leq a <b \) such that 

\begin{equation}l(a)<0 \leq l(b) \end{equation} 

which guarantees the existence of at least one minimum \(t^* \in [a,b]\). In the second phase, we can then decrease the intervals length with \( a \leq \bar a < \bar b \leq b \) such that \( 0 \leq l(t) \leq \epsilon\) is satisfied for all \( t\in [\bar a, \bar b ] \) and some given tolerance \(\epsilon>0 \). Both steps should be safeguarded and can be  assisted by using cubic splines inter- or extrapolating \(l(t)\). This approach has the potential of reducing computational costs and the benefit of avoiding function evaluations of \(f\) entirely.

%

%
%
%

\end{document}